\theoremstyle{plain}
\newtheorem{thm}{Theorem}[section]
\newtheorem{lemma}[thm]{Lemma}
\newtheorem{prop}[thm]{Proposition}
\newtheorem{rem}[thm]{Remark}
\newtheorem{claim}{Claim}
\long\def\@makecaption#1#2{%
\vskip\abovecaptionskip%
%\sbox\@tempboxa{#1: #2}% <--- original
\sbox\@tempboxa{#1 #2}
\ifdim \wd\@tempboxa >\hsize%
%#1: #2\par <--- original
#1 #2\par
\else
\global \@minipagefalse
\hb@xt@\hsize{\hfil\box\@tempboxa\hfil}%
\fi
\vskip\belowcaptionskip}
\begin{document}

\title{Automorphism groups of smooth plane curves} 
\author{Takeshi Harui}
\date{June 7, 2014} 

\subjclass[2000]{Primary 14H45; Secondary 14H50, 14C21, 14H51}
\keywords{plane curves, automorphism groups}

\address{Takeshi Harui: Academic Support Center, Kogakuin University, 2665-1 Nakano, Hachioji, Tokyo 192-0015, Japan.}
\email{takeshi@cwo.zaq.ne.jp, kt13459@ns.kogakuin.ac.jp} 

\begin{abstract}
The author classifies finite groups acting on smooth plane curves of degree at least four.
Furthermore, he gives some upper bounds for the order of automorphism groups of smooth plane curves
and determines the exceptional cases in terms of defining equations.
This paper also contains a simple proof of the uniqueness of smooth plane curves 
with the full automorphism group of maximum order for each degree.
\end{abstract}

\maketitle

%%%%%%%%%%%%%%%%%%%%%%%%%%%%%%%%%%%%%%%%%%%%%%%%%%%%%%%%%%%%%%%%%%%%%%%%%%%%%%%%%%%%%%%%%%%%%%%%%%%%%%%%%%%%%%%%%%%%%%%%%%%%%%%%%%%%%%%%%%%%%%%%%%%%%%%%%%%%%%%%%%%
%%%%%%%%%%%%%%%%%%%%%%%%%%%%%%%%%%%%%%%%%%%%%%%%%%%%%%%%%%%%%%%%%%%%%%%%%%%%%%%%%%%%%%%%%%%%%%%%%%%%%%%%%%%%%%%%%%%%%%%%%%%%%%%%%%%%%%%%%%%%%%%%%%%%%%%%%%%%%%%%%%%
%%%%%%%%%%%%%%%%%%%%%%%%%%%%%%%%%%%%%%%%%%%%%%%%%%%%%%%%%%%%%%%%%%%%%%%%%%%%%%%%%%%%%%%%%%%%%%%%%%%%%%%%%%%%%%%%%%%%%%%%%%%%%%%%%%%%%%%%%%%%%%%%%%%%%%%%%%%%%%%%%%%

%%%%%%■■■■■■■■■■■■■■■■■■■■■■■■■■■■■■■■■■■■■■■■■■■■■■■■■■■■■■■■■■■■
%%%%%%■■■■■■■■■■■■■■■ Section 1 始 ■■■■■■■■■■■■■■■■■■■■■■■■■■■■■■■■■■■■■■
%%%%%%■■■■■■■■■■■■■■■■■■■■■■■■■■■■■■■■■■■■■■■■■■■■■■■■■■■■■■■■■■■■
\section{Background  and Introduction} \label{Sec:Background}
The group of automorphisms of an algebraic curve defined over the complex number field is an old subject of research in algebraic geometry
and there are many works on the order of the group of automorphisms.
Among others, Hurwitz \cite{Hu} gave an universal upper bound (see Theorem \ref{thm:Hur} for the precise statement).
It is an application of Riemann-Hurwitz formula.
Following the same line, Oikawa \cite{O} proved another (and possibly better) upper bound for the order of automorphism groups with invariant subsets.
Later Arakawa \cite{A} proceeded further with a similar method (see Theorem \ref{thm:OA} for their works).
Their results are very useful for our study on smooth plane curves.

There are also many works for the structure of automorphism groups of algebraic curves.
In particular, the full automorphism groups of hyperelliptic curves are well known (\cite{BEM}, \cite{BGG}).
However, it seems that we still do not have enough knowledge about the determination of the full automorphism groups of non-hyperelliptic curves
except for some special cases, for example, the cases of low genus (\cite{Br}, \cite{He}, \cite{KKu}, \cite{KKi}) and Hurwitz curves. 

For plane curves, we have many examples of smooth plane curves whose group of automorphisms are completely known, such as Fermat curves (\cite{T}).
In the joint works with Komeda, Kato and Ohbuchi, the author gave a classification of smooth plane curves with automorphisms of certain type
(\cite{HKO}, \cite{HKKO}).
Bradley and D'Souza \cite{BD} gave upper bounds for the order of automorphism groups and collineation groups of singular plane curves in terms of their degree and the number of singularities.
There seems, however, no general result on the structure of automorphism groups of plane curves, even if they are smooth, as long as the author knows.

Automorphism groups of smooth plane curves of degree at most three is classically well known.
In this paper we study the cases of higher degree and consider the following problems:

\vspace{1em}

\noindent
\textbf{Problem.} (1) Classify automorphism groups of smooth plane curves.

\noindent
(2) Give a sharp upper bound for the order of automorphism groups of such curves.

\noindent
(3) Determine smooth plane curves with the group of automorphisms of large order.

\vspace{1em}

\noindent
We shall give a complete answer for each problem in Theorem \ref{thm:main1}, Theorem \ref{thm:main2} and Theorem \ref{thm:main3} respectively.
These are the main results of this article.

The first theorem, roughly speaking, states that smooth plane curves are divided into five kinds by their full automorphism group.
Curves of the first kind are smooth plane curves whose full automorphism group is cyclic.
The second kind consists of curves whose full automorphism group is the central extension of a finite subgroup of M\"{o}bius group $\textup{PGL}(2,\mathbb{C}) = \textup{Aut} (\mathbb{P}^1)$
by a cyclic group.
Curves of the third (resp.~the fourth) kind are descendants of Fermat (resp.~Klein) curves (see Section \ref{Sec:Main results} for the definition of this concept). 
For curves of the fifth kind, their full automorphism group is isomorphic to a primitive subgroup of $\textup{PGL}(3,\mathbb{C})$.  
It seems surprising that a smooth plane curve is a descendant of Fermat curve or Klein curve unless its full automorphism group is primitive or has a fixed point in the plane. 

There are several by-products of Theorem \ref{thm:main1} on automorphism groups of smooth plane curves.
We obtain, for example, a sharp upper bound of the order of such groups in Theorem \ref{thm:main2}.
For smooth plane curves, it is natural to expect that there exists a stronger upper bound of the order of their automorphism groups than Hurwitz's one.
Indeed, we show that the order of the full automorphism group of a smooth plane curve $d \ne 4,6$ is at most $6d^2$, which is attained by Fermat curve.
Moreover, a smooth plane curve with the full automorphism group of maximum order is unique for each degree up to projective equivalence.
We remark that Theorem \ref{thm:main2} has been shown in several special cases: $d =4$ (classical), $d=6$ (\cite{DIK}) and $d$ is a prime at most $20$ (\cite{KMP}).
Furthermore, Pambianco gave a complete proof of the same theorem for $d \ge 8$ in a different way (\cite[Theorem 1]{P}).

Our third main result, Theorem \ref{thm:main3}, is a classification of smooth plane curves with automorphism groups of large order in terms of defining equations.

%%%%%%■■■■■■■■■■■■■■■■■■■■■■■■■■■■■■■■■■■■■■■■■■■■■■■■■■■■■■■■■■■■
%%%%%%■■■■■■■■■■■■■■■ Section 2 始 ■■■■■■■■■■■■■■■■■■■■■■■■■■■■■■■■■■■■■■
%%%%%%■■■■■■■■■■■■■■■■■■■■■■■■■■■■■■■■■■■■■■■■■■■■■■■■■■■■■■■■■■■■
\section{Main results} \label{Sec:Main results}

First of all, we note a simple fact on automorphism groups of smooth plane curves and introduce several concepts.
Let $G$ be a group of automorphisms of a smooth plane curve of degree at least four.
Then it is naturally considered as a subgroup of $\textup{PGL}(3,\mathbb{C}) = \textup{Aut} (\mathbb{P}^2)$.

Let $F_d$ be Fermat curve $X^d+Y^d+Z^d=0$ of degree $d$.
In this article we denote by $K_d$ the smooth plane curve defined by the equation $X Y^{d-1}+Y Z^{d-1}+Z X^{d-1}=0$, which is called \textit{Klein curve} of degree $d$.

For a non-zero monomial $cX^i Y^j Z^k$ we define its \textit{exponent} as $\textup{max} \{ i,j,k \}$.
For a homogeneous polynomial $F$, the \textit{core} of $F$ is defined as the sum of all terms of $F$ with the greatest exponent.
A term of $F$ is said to be \textit{low} if it does not belong to the core of $F$.

Let $C_0$ be a smooth plane curve of degree at least four.
Then a pair $(C,G)$ of a smooth plane curve $C$ and a subgroup $G \subset \textup{Aut} (C)$ is said to be a \textit{descendant} of $C_0$ 
if $C$ is defined by a homogeneous polynomial whose core is a defining polynomial of $C_0$ and $G$ acts on $C_0$ under a suitable coordinate system. 
We simply call $C$ a descendant of $C_0$ if $(C, \textup{Aut} (C))$ is a descendant of $C_0$.

In this article, we denote by $\textup{PBD}(2,1)$ the subgroup of $\textup{PGL}(3,\mathbb{C})$ 
that consists of all elements representable by a $3 \times 3$ complex matrix $A$ of the form
\[
\begin{pmatrix}
\multicolumn{2}{c}{\multirow{2}*{\Large $A'$}} & 0 \\
 & & 0 \\
0 & 0 & \alpha 
\end{pmatrix}
\left( \textup{$A'$ is a regular $2 \times 2$ matrix, $\alpha \in \mathbb{C}^*$} \right).
\]
There exists the natural group homomorphism $\rho : \textup{PBD}(2,1) \to \textup{PGL}(2,\mathbb{C})$ $([A] \mapsto [A'])$,
where $[M]$ denotes the equivalence class of a matrix $M$.
Using these concepts we state our first main result as follows:

%%% 主定理1 %%%%%%%%%%%%%%%%%%%%%%%%%%%%%%%%%%%%%%%%%%%%%%%%%%%%%%%%%%%%%
\begin{thm} \label{thm:main1}
Let $C$ be a smooth plane curve of degree $d \ge 4$, $G$ a subgroup of $\textup{Aut}(C)$.
Then one of the following holds:
\begin{itemize}
\item[\textup{(a-i)}] $G$ fixes a point on $C$ and $G$ is a cyclic group whose order is at most $d(d-1)$.
Furthermore, if $d \ge 5$ and $|G| = d(d-1)$, then $C$ is projectively equivalent to the curve $YZ^{d-1} + X^d + Y^d =0$.
\item[\textup{(a-ii)}] $G$ fixes a point not lying on $C$ and there exists a commutative diagram
%%%%%%%%%%%%%%%%%%%%%%%%%%%%%%%%%%%%%%%%%%%%%%%%%%%%%%%%%%%%%
\begin{align*}
1 \to & \; \mathbb{C}^* \to \textup{PBD}(2,1) \stackrel{\rho} \to \textup{PGL}(2, \mathbb{C}) \to 1 \quad (\textup{exact}) \\
& \; \; \rotatebox{90}{$\hookrightarrow$} \hspace{3.6em} \rotatebox{90}{$\hookrightarrow$} \hspace{4.3em} \rotatebox{90}{$\hookrightarrow$} \\
1 \to & \;  N \; \; \longrightarrow \; \; G \; \; \; \longrightarrow \; \; \; G' \; \; \to 1 \quad (\textup{exact}), 
\end{align*}
%%%%%%%%%%%%%%%%%%%%%%%%%%%%%%%%%%%%%%%%%%%%%%%%%%%%%%%%%%%%%
where $N$ is a cyclic group whose order is a factor of $d$ and $G'$ is conjugate to a cyclic group $\mathbb{Z}_m$, a dihedral group $D_{2m}$, the tetrahedral group $A_4$, the octahedral group $S_4$ or the icosahedral group $A_5$, where $m$ is an integer at most $d-1$.
Moreover, if $G' \simeq D_{2m}$, then $m| \,d-2$ or $N$ is trivial.
In particular $|G| \le \textup{max} \{ 2d(d-2), 60d \}$.
\item[\textup{(b-i)}] $(C,G)$ is a descendant of Fermat curve $F_d:X^d+Y^d+Z^d=0$.
In this case $|G| \le 6d^2$.
\item[\textup{(b-ii)}] $(C,G)$ is a descendant of Klein curve $K_d: X Y^{d-1}+Y Z^{d-1}+Z X^{d-1}=0$.
In this case $|G| \le 3(d^2-3d+3)$ if $d \ge 5$.
On the other hand, $|G| \le 168$ if $d=4$.
\item[\textup{(c)}] $G$ is conjugate to a finite primitive subgroup of $\textup{PGL}(3,\mathbb{C})$, namely, the icosahedral group $A_5$,  
the Klein group $\textup{PSL}(2, \mathbb{F}_7)$, the alternating group $A_6$,
the Hessian group $H_{216}$ of order $216$ 
or its subgroup of order $36$ or $72$.
In particular $|G| \le 360$.
\end{itemize}
\end{thm}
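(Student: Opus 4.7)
The plan is to exploit the classical Mitchell--Blichfeldt classification of finite subgroups of $\textup{PGL}(3,\mathbb{C})$: every such subgroup is either (I) intransitive on $\mathbb{P}^2$, so it fixes a point; (II) transitive but imprimitive, so it stabilises a triangle while permuting the vertices non-trivially; or (III) primitive, in which case it is conjugate to one of a known short list of groups. Because $d\ge 4$, the plane embedding of $C$ is canonically attached to $C$ via the adjoint series, so $G\subseteq\textup{Aut}(C)$ injects into $\textup{PGL}(3,\mathbb{C})$ and the trichotomy applies. The five possibilities (a-i), (a-ii), (b-i), (b-ii), (c) will correspond, respectively, to (I) with the fixed point on $C$; (I) with the fixed point off $C$; (II) with Fermat-type core; (II) with Klein-type core; and (III).

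\textbf{The intransitive case.} Suppose $G$ has a fixed point $p\in\mathbb{P}^2$. If $p\in C$, then $G$ also stabilises the tangent line $T_pC$, hence lies in the stabiliser of the flag $p\in T_pC$; since a finite subgroup of a Borel of $\textup{PGL}(3,\mathbb{C})$ is contained in a torus, $G$ is abelian and diagonalisable. In coordinates with $p=(0:0:1)$ and $T_pC=\{Y=0\}$, each $g\in G$ has the form $\textup{diag}(\alpha,\beta,1)$; the non-vanishing of the coefficient of $YZ^{d-1}$ in $F$ forces the semi-invariance character to equal $g\mapsto\beta$, and then using one further monomial of $F$ (which must exist by smoothness) gives a relation expressing $\beta$ as a power of $\alpha$, so that $G$ becomes cyclic. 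Pushing this analysis further bounds $|G|$ by $d(d-1)$ with the extremal curve $YZ^{d-1}+X^d+Y^d=0$. If instead $p\notin C$, $G$ also preserves a second line $\ell$ (for example the polar of $p$ with respect to any $G$-invariant conic, or by a direct averaging on pencils through $p$), whence $G\subseteq\textup{PBD}(2,1)$ after a coordinate change; the kernel $N$ of $\rho|_G$ consists of scalars on $\ell$ preserving $F$, forcing $|N|\mid d$, while $G'=\rho(G)$ is a finite subgroup of $\textup{PGL}(2,\mathbb{C})$ and hence one of $\mathbb{Z}_m,D_{2m},A_4,S_4,A_5$. The bound $m\le d-1$ and the dihedral refinement emerge from inspecting which degree-$d$ semi-invariants exist for each $G'$.

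\textbf{The imprimitive case.} Assume $G$ is transitive and imprimitive. The normal subgroup $H\triangleleft G$ of index dividing $3$ fixing each vertex of the invariant triangle can be simultaneously diagonalised; changing coordinates so that the triangle is the coordinate triangle, $G$ permutes $\{(1:0:0),(0:1:0),(0:0:1)\}$ through a quotient $T\subseteq S_3$, and transitivity forces $T\supseteq\mathbb{Z}_3$. The defining polynomial $F$ is $G$-semi-invariant, and grouping its monomials by exponent, each exponent layer is itself $H$-semi-invariant. Restricting to the top layer (the core) and using that $T$ acts by coordinate permutation, a case-by-case analysis shows that, up to rescaling of the three coordinates, the core is either $X^d+Y^d+Z^d$ or $XY^{d-1}+YZ^{d-1}+ZX^{d-1}$; these two cores correspond exactly to descendants of $F_d$ and $K_d$. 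The claimed order bounds follow from the classical values $|\textup{Aut}(F_d)|=6d^2$ and $|\textup{Aut}(K_d)|=3(d^2-3d+3)$ for $d\ge 5$, with $|\textup{Aut}(K_4)|=168$ accounting for the exception in (b-ii).

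\textbf{Primitive case and principal obstacle.} The primitive case (c) is immediate from the Mitchell list, and $|G|\le 360=|A_6|$ is read off directly. The main difficulty I anticipate lies in the imprimitive analysis, specifically in proving that the core must have one of the two model shapes and nothing asymmetric or intermediate: this requires a careful interaction between the three-cycle constraint from $T$, the possible characters of $H$ appearing in $F$, and the smoothness of $C$ (which rules out candidate cores that would produce singular curves). A secondary delicate step is establishing sharpness in (a-i) and the dihedral refinement ``$m\mid d-2$ or $N$ trivial'' in (a-ii), which I would handle by combining the Oikawa--Arakawa theorems referred to in the introduction with explicit invariant theory for the $G'$-action on the fibres over $\ell$.
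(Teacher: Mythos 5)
Your top-level strategy coincides with the paper's: both run Mitchell's trichotomy for finite subgroups of $\textup{PGL}(3,\mathbb{C})$ and match its branches to the five conclusions. But your dictionary ``(I) with fixed point off $C$ $\leftrightarrow$ (a-ii)'' is not correct as stated, and this is the main gap. Take $C=F_d$ and $G=\langle[\zeta X,Y,Z],[X,\zeta Y,Z]\rangle$: this $G$ fixes the point $(0:0:1)\notin F_d$ and the line $Z=0$, yet $G'=\rho(G)$ is cyclic of order $m=d>d-1$. So the bound $m\le d-1$ that you propose to extract ``by inspecting which degree-$d$ semi-invariants exist'' is false without a further dichotomy. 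What the paper actually proves is: pick $\sigma\in G$ with $\rho(\sigma)$ of order $m$ and diagonalize it; if $C\cap L$ contains a point $Q$ other than the two fixed points $P_1,P_2$ of $\sigma$ on $L$, then the $\langle\sigma\rangle$-orbit of $Q$ inside $C\cap L$ has exactly $m$ elements, so B\'ezout gives $m\mid d-e_1-e_2$ with $e_k=i_{P_k}(C,L)$; and in the extreme case $m=d$ one gets $e_1=e_2=0$, the core of $F$ is $X^d+Y^d+Z^d$, and $(C,G)$ is a descendant of $F_d$ --- i.e.\ the conclusion is (b-i), not (a-ii). The same multiplicity bookkeeping yields the dihedral refinement ($e_1=e_2\ge2$ forces the term $X^{d-1}Z$ in $F$, which kills the homology generating $N$, while $e_1=e_2=1$ gives $m\mid d-2$), and the remaining subcase $C\cap L\subset\{P_1,P_2\}$ gives $m\mid d-1$ directly from the terms $X^{d-1}Z$ and $Y^{d-1}Z$. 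None of this is visible in your sketch, and the overlap between the point-fixing case and the Fermat-descendant case is exactly where the casework is delicate.

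A second gap is in the triangle case. The subcase you must exclude is: $C$ contains all three vertices and none of the three tangent lines there is an edge of the invariant triangle. The obstruction is not that the resulting core would be singular --- a core of the shape $X^{d-1}(aY+bZ)+\cdots$ with $a,b\ne0$ is perfectly compatible with smoothness --- so your plan of ``smoothness rules out candidate cores'' will not close it. The paper's argument is group-theoretic: the three tangent lines form a second invariant triangle with vertex set disjoint from $V$; any element of $\ker(G\to S_3)$ fixes both vertex sets pointwise and is therefore the identity (a nontrivial projective transformation of finite order fixes either its center plus its axis, or exactly three points); hence $G\cong\mathbb{Z}_3$ or $S_3$, and either group fixes a point of $\mathbb{P}^2$, contradicting the standing hypothesis of the triangle case. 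A minor further point: in case (a-i) your route to cyclicity (one relation $\alpha^i=\beta^{1-j}$ from a single extra monomial) does not by itself force a finite subgroup of the torus to be cyclic; the clean statement is the classical one that the stabilizer of a point on a curve of genus $\ge2$ acts faithfully on the cotangent line at that point, which the paper simply quotes, and the bound $d(d-1)$ with its extremal curve then requires the Arakawa-inequality case analysis that you have deferred.
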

%%% 主定理1 %%%%%%%%%%%%%%%%%%%%%%%%%%%%%%%%%%%%%%%%%%%%%%%%%%%%%%%%%%%%%

We make some remarks on this theorem.

\begin{rem} \label{rem:main1}
\normalfont
\noindent
(1) In cases (a-i) and (a-ii), $G$ fixes a point, say $P$.
In fact $G$ also fixes a line not passing through $P$, which follows from Theorem \ref{thm:sub_PGL}.

\noindent
(2) A point $P$ in $\mathbb{P}^2$ is called a \textit{Galois point} for $C$ 
if the projection $\pi_P$ from $C$ to a line with center $P$ is a Galois covering.
A Galois point $P$ for $C$ is said to be \textit{inner} (resp.~\textit{outer}) if $P \in C$ (resp.~$P \not \in C$).
In the case\,(a-ii), if $|N| = d$ then the fixed point of $G$ is an outer Galois point for $C$.

\noindent
(3) The Klein group in the case\,(c) is the full automorphism group of Klein quartic
and the alternating group $A_6$ is that of Wiman sextic (see Theorem \ref{thm:main2}).
The Hessian group of order $216$ is generated by the four elements $h_i$ $(i=1,2,3,4)$ represented by the following matrices:
\[
\begin{pmatrix}
0 & 1 & 0 \\
0 & 0 & 1 \\
1 & 0 & 0
\end{pmatrix}, 
\begin{pmatrix}
1 & 0 & 0 \\
0 & \omega & 0 \\
0 & 0 & \omega^2
\end{pmatrix},
\begin{pmatrix}
1 & 1 & 1 \\
1 & \omega & \omega^2 \\
1 & \omega^2 & \omega
\end{pmatrix}
\; \textup{and} \; 
\begin{pmatrix}
1 & 0 & 0 \\
0 & \omega & 0 \\
0 & 0 & \omega
\end{pmatrix}, 
\]
where $\omega$ is a primitive third root of unity.
This group is the full automorphism group of a smooth plane sextic (see Remark \ref{rem:main2} (2)).
Its primitive subgroups of order $36$ and $72$ are respectively equal to $\langle h_1, h_2, h_3 \rangle$ and $\langle h_1, h_2, h_3, u \rangle$,
where $u = h_1^{-1} h_4^2 h_1$. 
\end{rem}

As a corollary of Theorem \ref{thm:main1}, we obtain a sharp upper bound for the order of automorphism groups of smooth plane curves
and classify the extremal cases.

%%% 主定理2 %%%%%%%%%%%%%%%%%%%%%%%%%%%%%%%%%%%%%%%%%%%%%%%%%%%%%%%%%%%%%
\begin{thm} \label{thm:main2}
Let $C$ be a smooth plane curve of degree $d \ge 4$.
Then $|\textup{Aut}(C)| \le 6d^2$ except the following cases:
\begin{itemize}
\item[\textup{(i)}] $d=4$ and $C$ is projectively equivalent to Klein quartic $XY^3+YZ^3+ZX^3 =0$.
In this case $\textup{Aut}(C)$ is the Klein group $\textup{PSL}(2, \mathbb{F}_7)$, which is of order $168$. 
\item[\textup{(ii)}] $d=6$ and $C$ is projectively equivalent to the sextic 
\[ 10 X^3 Y^3 + 9 X^5Z + 9Y^5 Z - 45 X^2 Y^2 Z^2 - 135 X Y Z^4 + 27 Z^6 = 0. \]
In this case $\textup{Aut}(C)$ is equal to $A_6$, a simple group of order $360$.
\end{itemize}
Furthermore, for any $d \ne 6$, the equality $|\textup{Aut}(C)| = 6d^2$ holds if and only if $C$ is projectively equivalent to Fermat curve $F_d: X^d+Y^d+Z^d=0$, 
in which case $\textup{Aut} (C)$ is a semidirect product of $S_3$ acting on $\mathbb{Z}_d^2$.
In particular, for each $d \ge 4$, there exists a unique smooth plane curve with the full group of automorphisms of maximum order up to projective equivalence.
\end{thm}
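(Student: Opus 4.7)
The strategy is to invoke Theorem \ref{thm:main1} and verify the bound $|\textup{Aut}(C)| \le 6d^2$ case by case. Three cases are immediate: (a-i) gives $|G| \le d(d-1) < 6d^2$; (b-i) gives $|G| \le 6d^2$ directly and is the candidate extremal case; (b-ii) for $d \ge 5$ gives $|G| \le 3(d^2 - 3d + 3) < 6d^2$. For $d = 4$ the Klein-descendant bound is $168$, and it is attained only when $C$ itself is projectively equivalent to $K_4$, yielding exception (i).

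In case (c) we have $|G| \le 360$, and this beats $6d^2$ only when $d \le 7$. I will dispose of these small degrees using classical invariant theory. For each primitive subgroup of $\textup{PGL}(3,\mathbb{C})$ listed in Theorem \ref{thm:main1}(c) the degrees of the primary invariants are known: $(4,6,14,21)$ for $\textup{PSL}(2,\mathbb{F}_7)$, $(2,6,10,15)$ for $A_5$, $(6,12,30,45)$ for $A_6$, and $(6,9,12)$ for the Hessian group and its primitive subgroups. Matching these degrees against $d \in \{4,5,6,7\}$ and checking smoothness of the resulting invariant curves shows that only $d=4$ with $\textup{PSL}(2,\mathbb{F}_7)$ (already absorbed into exception (i)) and $d=6$ with $A_6$ can occur; in the latter case the essentially unique smooth $A_6$-invariant plane sextic is projectively equivalent to the polynomial in exception (ii).

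Case (a-ii) is the subtlest. The nominal bound $\max\{2d(d-2), 60d\}$ exceeds $6d^2$ precisely when $G' \cong A_5$ and $d \le 9$. Here I refine using the embedding $G \subset \textup{PBD}(2,1)$: after normalization the defining polynomial takes the form $F = \sum_{i=0}^{d} Z^{d-i} f_i(X,Y)$, and invariance under the $\mathbb{C}^*$-factor forces $|N|$ to divide $\gcd\{(d-i)-(d-j): f_i, f_j \ne 0\}$. Invariance under $A_5 \subset \textup{PGL}(2,\mathbb{C})$ acting on $(X,Y)$ constrains each $f_i$ to lie in a small set of binary icosahedral invariants, and smoothness of $C$ rules out most sparse configurations. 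A direct check in each $d \in \{4,\ldots,9\}$ then gives $|G| \le 6d^2$ with strict inequality, so case (a-ii) contributes no new extremum. I expect this small-$d$ analysis to be the main obstacle, as the bound from Theorem \ref{thm:main1} is not itself sharp in this range.

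Finally, for the uniqueness statement, equality $|G| = 6d^2$ in case (b-i) forces $G = \textup{Aut}(F_d) \cong S_3 \ltimes \mathbb{Z}_d^{\,2}$ after a projective change of coordinates. A monomial $X^i Y^j Z^k$ with $i+j+k = d$ transforms by a character of the diagonal $\mathbb{Z}_d^{\,2}$, and that character is trivial precisely when $i \equiv j \equiv k \pmod d$, which together with $i+j+k = d$ forces $(i,j,k) \in \{(d,0,0),(0,d,0),(0,0,d)\}$. Imposing the $S_3$-symmetry then fixes the coefficients of $X^d, Y^d, Z^d$ to be equal, so the only $\textup{Aut}(F_d)$-invariant curve of degree $d$ is $F_d$ itself. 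This gives both $|\textup{Aut}(F_d)| = 6d^2$ and the uniqueness up to projective equivalence.
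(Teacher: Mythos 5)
Your overall strategy---running through the case division of Theorem \ref{thm:main1} and bounding each case separately---is viable but genuinely different from, and noticeably heavier than, the paper's argument. The paper never needs the fine structure of case (a-ii): it observes that if $\textup{Aut}(C)$ is not primitive then by Theorem \ref{thm:sub_PGL} it fixes a line $L$ or a triangle $\Delta$, and applying Oikawa's inequality to the invariant set $C \cap L$ (of size at most $d$) or $C \cap \Delta$ (of size at most $3d$) gives $|\textup{Aut}(C)| \le 6d(d-2)$, resp. $|\textup{Aut}(C)| \le 6d^2$, in one line; this makes your ``main obstacle'' (the $G' \cong A_5$, $d \le 9$ subcase) disappear entirely. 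Your plan for that subcase is completable, but you should state the one fact that makes it work: a nonzero binary form of degree $i \ge 1$ that is a relative invariant of the binary icosahedral group has degree at least $12$ (the invariant ring is generated in degrees $12, 20, 30$ and the group is perfect), so for $d \le 11$ every $f_i$ with $1 \le i \le d$ vanishes and $F$ degenerates to $cZ^d$, contradicting smoothness; ``a direct check in each $d$'' is not yet an argument. For the primitive case the paper uses the refined Hurwitz bound to exclude $d = 5, 7$ and a B\'ezout--Oikawa argument applied to $C \cap W_6$ (resp. $C \cap K_4$) for the uniqueness at $d = 6$ (resp. $d=4$); your invariant-degree approach is a legitimate alternative and arguably cleaner, though it quietly requires the (semi-)invariant degrees of all five primitive groups and the one-dimensionality of the relevant spaces of invariants.

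One genuine flaw: in the uniqueness step you assert that the character of the diagonal $\mathbb{Z}_d^2$ on $X^iY^jZ^k$ is trivial precisely when $i \equiv j \equiv k \pmod d$. That is the wrong condition: when $3 \mid d$ the monomial $X^{d/3}Y^{d/3}Z^{d/3}$ satisfies $i \equiv j \equiv k \pmod d$ but is not fixed by $[\zeta X, Y, Z]$, and the curves $X^{3m}+Y^{3m}+Z^{3m}-3\lambda X^mY^mZ^m$ of Theorem \ref{thm:main3}(iv) would otherwise contradict uniqueness. The correct condition is $\zeta^i = \zeta^j = 1$, i.e. $i \equiv j \equiv 0 \pmod d$, which together with $i+j+k=d$ does give $(i,j,k) \in \{(d,0,0),(0,d,0),(0,0,d)\}$; this is exactly the computation in the paper. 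Since the conclusion you draw is the right one, this is an error in the justification rather than in the result, but as written that step does not prove what it claims.
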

%%% 主定理2 %%%%%%%%%%%%%%%%%%%%%%%%%%%%%%%%%%%%%%%%%%%%%%%%%%%%%%%%%%%%%

\begin{rem} \label{rem:main2}
\normalfont
(1) It is classically known that Klein quartic has the Klein group $\textup{PSL}(2, \mathbb{F}_7)$ as its group of automorphisms
(see \cite{Bl}).
For the sextic in the above theorem, Wiman \cite{W} proved that its group of automorphisms is isomorphic to $A_6$.
In \cite{DIK} Doi, Idei and Kaneta called this curve \textit{Wiman sextic} and showed that it is the only smooth plane sextic whose full automorphism group has the maximum order $360$. 
We shall give a simpler proof on the uniqueness of Klein quartic (resp.~Wiman sextic) as a smooth plane curve of degree four (resp.~six) with the group of automorphisms of maximum order. 
\\
(2) When $d=6$, the smooth plane sextic defined by the equation
\[ X^6+Y^6+Z^6-10(X^3 Y^3 + Y^3 Z^3 + Z^3 X^3  ) =0 \]
also satisfies $|\textup{Aut}(C)|=216=6^3$.
In this case $\textup{Aut}(C)$ is equal to the Hessian group of order $216$, therefore this curve is not a descendant of Fermat curve.
\end{rem}

As another by-product of Theorem \ref{thm:main1}, we also give a stronger upper bound for the order of automorphism groups of smooth plane curves
and classify the exceptional cases when $d \ge 60$:

%%% 主定理3 %%%%%%%%%%%%%%%%%%%%%%%%%%%%%%%%%%%%%%%%%%%%%%%%%%%%%%%%%%%%%
\begin{thm} \label{thm:main3}
Let $C$ be a smooth plane curve of degree $d \ge 60$.
Then $|\textup{Aut}(C)| \le d^2$ unless $C$ is projectively equivalent to one of the following curves:
\begin{itemize}
\item[\textup{(i)}] Fermat curve $F_d: X^d+Y^d+Z^d=0$ $(|\textup{Aut}(F_d)|=6d^2)$.
\item[\textup{(ii)}] Klein curve $K_d: X Y^{d-1}+Y Z^{d-1}+Z X^{d-1}=0$ $(|\textup{Aut}(K_d)|=3(d^2-3d+3))$.
\item[\textup{(iii)}] the smooth plane curve defined by the equation
\[ Z^d + XY(X^{d-2}+Y^{d-2}) = 0, \]
in which case $|\textup{Aut}(C)| = 2d(d-2)$.
\item[\textup{(iv)}] a descendant of Fermat curve defined by the equation
\[ X^{3m}+Y^{3m}+Z^{3m} -3 \lambda X^mY^mZ^m =0, \]
where $d=3m$ and $\lambda$ is a non-zero number with $\lambda^3 \ne 1$.
In this case $|\textup{Aut}(C)| = 2d^2$.
\item[\textup{(v)}] a descendant of Fermat curve defined by the equation 
\[ X^{2m}+Y^{2m}+Z^{2m}+\lambda(X^mY^m+Y^mZ^m+Z^mX^m) =0,\]
where $d=2m$ and $\lambda \ne 0, -1, \pm2$. 
In this case $|\textup{Aut}(C)| = 6m^2 = (3/2)d^2$.
\end{itemize}
\end{thm}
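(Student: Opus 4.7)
The plan is to apply Theorem \ref{thm:main1} to $G = \textup{Aut}(C)$ and show case by case that $|G| \le d^2$ for $d \ge 60$ unless $C$ is projectively equivalent to one of the curves (i)--(v). Cases (a-i) and (c) are immediate: in (a-i), $|G| \le d(d-1) < d^2$, and in (c), $|G| \le 360 \le d^2$ since $d \ge 60$.

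In case (a-ii), the hypothesis $d \ge 60$ gives $60d \le d^2$, which rules out $G'$ being $\mathbb{Z}_m$, $A_4$, $S_4$, or $A_5$, so $|G| > d^2$ forces $G' \cong D_{2m}$. Writing $|G| = 2m|N|$ with $|N| \mid d$, and using that $m \mid d-2$ when $N$ is nontrivial, the inequality $|G| > d^2$ forces $|N| = d$ and $m = d-2$, hence $|G| = 2d(d-2)$. Since $N$ is generated, after a coordinate change, by $(X{:}Y{:}Z) \mapsto (X{:}Y{:}\zeta_d Z)$, the defining polynomial of $C$ takes the form $Z^d + h(X,Y) = 0$ for a binary form $h$ of degree $d$ with distinct roots. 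Requiring $G' \cong D_{2(d-2)}$ to act on $(X{:}Y)$ compatibly with the $Z^d$-scaling, a computation of the degree-$d$ dihedral semi-invariants singles out $h = XY(X^{d-2}+Y^{d-2})$ up to projective equivalence, giving curve (iii).

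In case (b-i), $G$ is a subgroup of $\textup{Aut}(F_d) \cong \mathbb{Z}_d^2 \rtimes S_3$, where the normal subgroup $\mathbb{Z}_d^2$ consists of the diagonal projective transformations. Setting $H = G \cap \mathbb{Z}_d^2$ and $K = G/H \le S_3$, the constraint $|G| = |H|\cdot |K| > d^2$ forces the index of $H$ in $\mathbb{Z}_d^2$ to be less than $|K| \le 6$. A character-theoretic analysis shows that the only $S_3$-invariant subgroups of such small index are $\mathbb{Z}_d^2$ itself, the index-$3$ kernel of the $S_3$-fixed character $(a,b,c) \mapsto (a+b+c) \bmod 3$ (requires $3 \mid d$), and the index-$4$ intersection of the three $S_3$-permuted order-$2$ characters (requires $d$ even); for $|K| < 6$ the forcing $H = \mathbb{Z}_d^2$ leads to $C = F_d$ (the only $\mathbb{Z}_d^2$-invariant smooth descendant), with the full $S_3$ acting, contradicting $K \ne S_3$. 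For each admissible $H$, enumerating degree-$d$ monomials $X^iY^jZ^k$ with $\max(i,j,k) < d$ satisfying $ai+bj+ck \equiv 0 \pmod d$ for every $(a,b,c) \in H$ and taking their $S_3$-symmetrization yields $C = F_d$ (curve (i)), the pencil $X^{3m}+Y^{3m}+Z^{3m} - 3\lambda X^m Y^m Z^m = 0$ (curve (iv)), and $X^{2m}+Y^{2m}+Z^{2m}+\lambda(X^m Y^m + Y^m Z^m + Z^m X^m) = 0$ (curve (v)); the listed constraints on $\lambda$ follow from smoothness and from $C \not\cong F_d$.

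In case (b-ii), $\textup{Aut}(K_d) \cong \mathbb{Z}_3 \ltimes \mathbb{Z}_n$ with $n = d^2-3d+3$ always odd, so every proper subgroup has order at most $n < d^2$; hence $|G| > d^2$ forces $G = \textup{Aut}(K_d)$. A direct comparison of the eigencharacter of the Klein polynomial under the cyclic normal subgroup with the eigencharacter of any candidate low monomial (with $\max \le d-2$) shows that no non-zero low term is compatible, so $C = K_d$, giving curve (ii). The main obstacle throughout is case (b-i): the combinatorial enumeration of admissible pairs $(H,K)$ together with the matching invariant low-term polynomials requires careful character theory, along with the verification that no further descendants arise from index-$2$ or index-$5$ subgroups and that smoothness restricts $\lambda$ precisely as stated.
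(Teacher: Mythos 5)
Your proposal is correct and follows essentially the same route as the paper: reduce to cases (a-ii), (b-i), (b-ii) of Theorem \ref{thm:main1} using $|G|>d^2\ge\max\{60d,360\}$, pin down $|G|=2d(d-2)$ and the binary form $h$ in the dihedral case, classify the large subgroups of $\textup{Aut}(F_d)$ via their intersection with $\mathbb{Z}_d^2$ (the paper does this with the two coordinate projections $\varpi_1,\varpi_2$ in Lemma \ref{lem:descendant} rather than your character-theoretic language, but the content is identical), and handle the Klein case via the odd order of $\textup{Aut}(K_d)$ together with the eigenvalue comparison of Proposition \ref{prop:uniquness_of_K_d}. The computations you defer (the dihedral semi-invariant calculation, the nonexistence of invariant subgroups of index $2$ and $5$) are exactly the ones the paper carries out explicitly, and they go through.
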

%%% 主定理3 %%%%%%%%%%%%%%%%%%%%%%%%%%%%%%%%%%%%%%%%%%%%%%%%%%%%%%%%%%%%%

%%%%%%%%%%%%%%%%%%%%%%%%%%%%%%%%%%%%%%%%%%%%%%%%%%%%%%%%%%%%%%%%%%%%%%%%%%%%%%%%%%%%%%%%%%%%%%%%%%%%%%%%%%%%%%%%%%%%%%%%%%%%%%%%%%%%%%%%%%%%%%%%%%%%%%%%%%%%%%%%%%%
%%%%%%%%%%%%%%%%%%%% Section 2 終 %%%%%%%%%%%%%%%%%%%%%%%%%%%%%%%%%%%%%%%%%%%%%%%%%%%%%%%%%%%%%%%%%%%%%%%%%%%%%%%%%%%%%%%%%%%%%%%%%%%%%%%%%%%%%%%%%%%%%%%%%%%%%%%
%%%%%%%%%%%%%%%%%%%%%%%%%%%%%%%%%%%%%%%%%%%%%%%%%%%%%%%%%%%%%%%%%%%%%%%%%%%%%%%%%%%%%%%%%%%%%%%%%%%%%%%%%%%%%%%%%%%%%%%%%%%%%%%%%%%%%%%%%%%%%%%%%%%%%%%%%%%%%%%%%%%

%%%%%%■■■■■■■■■■■■■■■■■■■■■■■■■■■■■■■■■■■■■■■■■■■■■■■■■■■■■■■■■■■■
%%%%%%■■■■■■■■■■■■■■■ Section 3 始 ■■■■■■■■■■■■■■■■■■■■■■■■■■■■■■■■■■■■■■
%%%%%%■■■■■■■■■■■■■■■■■■■■■■■■■■■■■■■■■■■■■■■■■■■■■■■■■■■■■■■■■■■■
\section{Preliminary results} \label{Sec:Preliminary}

\noindent
\textbf{Notation and Conventions} 

We identify a regular matrix with the projective transformation represented by the matrix
if no confusion occurs.
When a planar projective transformation preserves a smooth plane curve, it is also identified with the automorphism obtained by its restriction to the curve.

We denote by $[H_1(X,Y,Z),H_2(X,Y,Z),H_3(X,Y,Z)]$ a planar projective transformation defined by $(X:Y:Z) \mapsto (H_1(X,Y,Z):H_2(X,Y,Z):H_3(X,Y,Z))$,
where $H_1$, $H_2$ and $H_3$ are homogeneous linear polynomials. 

A planar projective transformation of finite order is classically called a \textit{homology} 
if it is written in the form $[X,Y,\zeta Z]$ under a suitable coordinate system, where $\zeta$ is a root of unity.
A non-trivial homology fixes a unique line pointwise and a unique point not lying the line.
They are respectively called its \textit{axis} and \textit{center}. 

A \textit{triangle} means a set of three non-concurrent lines. 
Each line is called an \textit{edge} of the triangle.

The line defined by the equation $X=0$ (resp.~$Y=0$ and $Z=0$) will be denoted by $L_1$ (resp.~$L_2$ and $L_3$).
We also denote by $P_1$ (resp.~$P_2$ and $P_3$) the point $(1:0:0)$ (resp.~$(0:1:0)$ and $(0:0:1)$).

For a positive integer $m$, we denote by $\mathbb{Z}_m$ (resp.~$\mathbb{Z}_m^r$) a cyclic group of order $m$ (resp.~the direct product of $r$ copies of $\mathbb{Z}_m$). 

\vspace{1em}

In this section $C$ denotes a smooth irreducible projective curve of genus $g \ge 2$ defined over the field of complex numbers.
Then the full group of its automorphisms is a finite group and we have a famous upper bound of its order, which is known as \textit{Hurwitz bound}:

\begin{thm} \textup{(\cite{Hu})} \label{thm:Hur}
Let $G$ be a subgroup of $\textup{Aut}(C)$.
Then $|G| \le 84(g-1)$.
More precisely, 
\[ 
\frac{|G|}{g-1} = 84, \, 48, \, 40, \, 36, \, 30 \; \; \textup{or} \; \; \frac{132}{5} \quad \textup{or} \quad \frac{|G|}{g-1} \le 24. 
\]
\end{thm}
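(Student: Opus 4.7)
The plan is to prove this by applying the Riemann--Hurwitz formula to the quotient morphism $\pi\colon C \to C/G$, and then extracting the sharp bound via a careful analysis of the possible ramification data.

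Let $g'$ denote the genus of $C/G$, and let $Q_1, \ldots, Q_r \in C/G$ be the branch points, with ramification indices $e_1, \ldots, e_r$ along their respective fibres (the stabilizer orders are constant on each fibre because $G$ acts, so a single $e_i$ for each branch point is well defined, and the fibre over $Q_i$ consists of $|G|/e_i$ points). The Riemann--Hurwitz formula then reads
\begin{equation*}
2g - 2 \; = \; |G|\left( 2g' - 2 + \sum_{i=1}^{r} \left( 1 - \frac{1}{e_i} \right) \right).
\end{equation*}
Since $g \ge 2$, the expression $R(g'; e_1, \ldots, e_r) := 2g' - 2 + \sum_{i=1}^{r}(1 - 1/e_i)$ must be strictly positive, and an upper bound on $|G|$ amounts to a lower bound on $R$.

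Next I would enumerate the minimizers. If $g' \ge 2$, then $R \ge 2$ already, giving $|G| \le g-1$. If $g' = 1$, then $r \ge 1$ and each term $1 - 1/e_i \ge 1/2$, so $R \ge 1/2$, giving $|G| \le 4(g-1)$. The decisive case is $g' = 0$: here $r \ge 3$ is forced by positivity, and one must minimize $-2 + \sum_{i=1}^{r}(1 - 1/e_i)$ over tuples of integers $e_i \ge 2$. A finite enumeration (first $r = 3$, then $r = 4$, then $r \ge 5$ which automatically gives $R \ge 1/2$) shows the smallest positive value is $1/42$, attained only by $(e_1,e_2,e_3) = (2,3,7)$, yielding $|G| \le 84(g-1)$. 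The next few values, read off by the same enumeration, are $1/24$ from $(2,3,8)$, $1/20$ from $(2,4,5)$, $1/18$ from $(3,3,4)$ together with $(2,3,9)$, $1/15$ from $(2,3,10)$, $5/132$ from $(2,3,11)$, and every other positive value of $R$ is at least $1/12$; this is exactly the refined list stated in the theorem.

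The main obstacle is the combinatorial minimization in the $g' = 0$ case: one must show that outside the short list of exceptional tuples, the quantity $-2 + \sum(1 - 1/e_i)$ jumps up to at least $1/12$. The standard trick is to fix $e_1 \le e_2 \le \cdots$, observe that $r \ge 5$ immediately gives $R \ge 1/2$ and $e_1 \ge 3$ gives $R \ge 1/4$, so one is reduced to the case $r \in \{3,4\}$ with $e_1 = 2$, and then to a finite case check in a handful of small triples. Everything else (the Riemann--Hurwitz computation, the passage from stabilizer orders to the formula, and the final division by $|G|$) is mechanical; the subtlety lies entirely in pinning down the gap between consecutive values of $R$ near zero.
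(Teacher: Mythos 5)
The paper does not prove this statement at all: it is quoted as a known result with a citation to Hurwitz, so there is no internal argument to compare against. Your proposal is the standard (and correct) route — Riemann--Hurwitz applied to $C \to C/G$, positivity of $R = 2g'-2+\sum(1-1/e_i)$, and a finite enumeration of the signatures with $0 < R < 1/12$ — and carried out carefully it does yield exactly the refined list in the statement. Two arithmetic slips in your enumeration should be fixed, since the ``more precisely'' clause is nothing but this list. First, $(3,3,4)$ gives $R = -2 + 2/3+2/3+3/4 = 1/12$, not $1/18$; it therefore belongs with the boundary case $|G|/(g-1)=24$ (alongside $(2,3,12)$ and $(2,4,6)$), and the value $1/18$ comes from $(2,3,9)$ alone. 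Second, $(2,3,11)$ gives $R = -2 + 1/2 + 2/3 + 10/11 = 5/66$, not $5/132$; since $|G|/(g-1) = 2/R$, it is $5/66$ that produces the ratio $132/5$ in the theorem, whereas $5/132$ would give $264/5$, which is not on the list. With those corrections the six exceptional ratios $84, 48, 40, 36, 30, 132/5$ correspond to $R = 1/42, 1/24, 1/20, 1/18, 1/15, 5/66$, and your reduction (to $r\in\{3,4\}$, $e_1=2$, and then $e_2\in\{3,4,5\}$) does verify that every other positive value of $R$ is at least $1/12$.
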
 

Oikawa \cite{O} and Arakawa \cite{A} gave possibly stronger upper bounds under the assumption that $G$ fixes finite subsets of $C$
(not necessarily pointwise).
The following theorem is an application of Riemann-Hurwitz formula:

%%% 不変部分集合 %%%%%%%%%%%%%%%%%%%%%%%%%%%%%%%%%%%%%%%%%%%%%%%%%%%%%%%%%%%%%%%%%%%%%%%%%%%%%
\begin{thm} \textup{(\cite[Theorem 1]{O}, \cite[Theorem 3]{A})} \label{thm:OA}
Let $G$ be a subgroup of $\textup{Aut}(C)$.
\begin{itemize}
\item[\textup{(1)}] $($Oikawa's inequality$)$ If $G$ fixes a finite subset $S$ of $C$, i.e., $GS=S$, with $|S|=k \ge 1$, then $|G| \le 12(g-1) + 6k$.
\item[\textup{(2)}] $($Arakawa's inequality$)$ If $G$ fixes three distinct finite subsets $S_i$ $(i=1,2,3)$ of $C$ with $|S_i|=k_i \ge 1$, then $|G| \le 2(g-1) + k_1+k_2+k_3$.
\end{itemize} 
\end{thm}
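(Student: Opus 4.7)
The natural approach is to study the quotient map $\pi: C \to C/G$ via Riemann-Hurwitz. Writing $g' = g(C/G)$ and denoting the ramification indices at the branch points $Q_1,\dots,Q_r \in C/G$ by $e_1,\dots,e_r$,
\[
\frac{2g-2}{|G|} = 2g' - 2 + \sum_{i=1}^r \left(1 - \frac{1}{e_i}\right).
\]
For a $G$-invariant subset $S \subset C$ with image $\bar S = \pi(S)$ of cardinality $s$, orbit-stabilizer gives $k = |S| = \sum_{Q \in \bar S} |G|/e_Q$, with the convention $e_Q = 1$ at unbranched points. Adding $k/|G| = \sum_{Q \in \bar S} 1/e_Q$ to both sides and regrouping the contributions at points of $\bar S$ telescopes to the master inequality
\[
\frac{2g - 2 + k}{|G|} \;\geq\; 2g' - 2 + s,
\]
which will be the engine for both parts of the theorem.

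In the easy regime $2g' - 2 + s \geq 1$ --- that is, $g' \geq 1$ with $s \geq 1$, or $g' = 0$ with $s \geq 3$ --- this immediately gives $|G| \leq 2g - 2 + k$, well inside Oikawa's bound. For Oikawa (1), what remains is $g' = 0$ with $s \in \{1,2\}$, where the master inequality becomes vacuous. There I would return to the full Riemann-Hurwitz identity and exploit $g \geq 2$, which forces $\sum(1 - 1/e_i) > 2$ and hence additional branch points outside $\bar S$. When $s = 1$, at least two further branches with indices $\geq 2, 3$ are required; their combined contribution $(1 - 1/2) + (1 - 1/3) = 7/6$, after absorbing $2g' - 2 + s = -1$, leaves $1/6$, yielding precisely $|G| \leq 12(g-1) + 6k$, with equality attained on a $(0;2,3,7)$-type signature. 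The case $s = 2$ needs only one extra branch and gives a strictly better constant.

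For Arakawa (2), I would apply the same master inequality to the union $\bar S_1 \cup \bar S_2 \cup \bar S_3$. Under any reading of ``three distinct invariant subsets'' which guarantees that these images together cover at least three points of $C/G$ --- for instance pairwise disjointness of the $S_i$, or the $S_i$ being three distinct $G$-orbits --- one has $s \geq 3$, so $2g' - 2 + s \geq 1$, and the master inequality delivers $|G| \leq 2(g-1) + k_1 + k_2 + k_3$ directly. The main obstacle throughout is precisely the low-genus-quotient analysis: whenever $g' = 0$ and the invariant sets concentrate on too few points of $C/G$, the master inequality alone is vacuous, and recovering the sharp constants requires using $g \geq 2$ to extract ``hidden'' branch points and to pin down the extremal orbifold signatures on which the bounds are attained.
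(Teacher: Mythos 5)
The paper itself contains no proof of this theorem: it is imported verbatim from Oikawa \cite{O} and Arakawa \cite{A}, with only the remark that it is an application of the Riemann--Hurwitz formula. Your argument is a correct reconstruction along exactly those lines. The master inequality $\frac{2g-2+k}{|G|}\ge 2g'-2+s$ is right: each point of $\bar S$ contributes exactly $1$ to the regrouped sum whether or not it is a branch point, and every branch point outside $\bar S$ contributes $1-1/e_i\ge 1/2>0$. Your case analysis for Oikawa is also complete once one checks that for $g'=0$, $s=1$ the quantity $-1+\sum(1-1/e_i)$, summed over branch points outside $\bar S$, must be positive and that its minimum positive value is $1/6$: one extra branch point gives $-1/e<0$, three or more give at least $1/2$, and among pairs $(p,q)$ the minimum positive value of $1-1/p-1/q$ is attained at $(2,3)$. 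This yields $|G|\le 6(2g-2+k)=12(g-1)+6k$, with equality exactly for signatures $(0;2,3,e)$ and $S$ the fibre over the third point (e.g.\ Fermat curves with $e=2d$, not only the Hurwitz case $e=7$); the subcase $s=2$ gives the sharper $|G|\le 4(g-1)+2k$ as you say.

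Your hedge on the hypothesis of Arakawa's inequality is not merely prudent --- it is necessary, because with ``distinct'' read literally the statement is false. Take a Hurwitz curve (e.g.\ the Klein quartic, $g=3$, $|G|=168$, signature $(0;2,3,7)$), let $O_1,O_2$ be the fibres over the branch points of index $3$ and $7$, and set $S_1=O_1$, $S_2=O_2$, $S_3=O_1\cup O_2$: these are three distinct nonempty invariant sets with $k_1+k_2+k_3=2\bigl(\frac{|G|}{3}+\frac{|G|}{7}\bigr)=\frac{20}{21}|G|$, so $2(g-1)+k_1+k_2+k_3=\frac{41}{42}|G|<|G|$ (for the Klein quartic, $164<168$). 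The intended hypothesis is pairwise disjointness, which is how the present paper always applies the inequality (e.g.\ in the proof of Lemma \ref{lem:cyclic}, where the three sets are visibly disjoint); disjointness forces $s\ge 3$ and your one-line deduction from the master inequality then goes through.
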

%%%%%%%%%%%%%%%%%%%%%%%%%%%%%%%%%%%%%%%%%%%%%%%%%%%%%%%%%%%%%%%%%%%%%%%%%%%%%%%%%%%%%%%%%%%%%%

As an application of the former inequality, we can determine the full automorphism groups of Fermat curves and Klein curves.
For Fermat curves, Tzermias \cite{T} verified Weil's assertion on the structure of $\textup{Aut}(F_d)$ that the group is a semidirect product of $S_3$ acting on $\mathbb{Z}_d^2$
in characteristic zero.
We give another proof of this fact.

\begin{prop} \label{prop:F} 
Let $d$ be an integer with $d \ge 4$.
Then the full group of automorphisms of Fermat curve $F_d$ is generated by four transformations $[\zeta X,Y,Z]$, $[X, \zeta Y, Z]$, $[Y,Z,X]$ and $[X,Z,Y]$, 
where $\zeta$ is a primitive $d$-th root of unity. 
It is isomorphic to a semidirect product of $S_3$ acting on $\mathbb{Z}_d^2$, in other words, there exists a split short exact sequence of groups 
\[ 1 \to \mathbb{Z}_d^2 \to \textup{Aut}(F_d) \to S_3 \to 1. \] 
In particular $|\textup{Aut}(F_d)| = 6d^2$.
\end{prop}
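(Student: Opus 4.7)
The plan is to sandwich $|\textup{Aut}(F_d)|$ between the order $6d^2$ of the subgroup $H$ generated by the four listed matrices and an Oikawa-style upper bound applied to the flex set.

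First I would check directly that each of the four transformations preserves $F_d$ and determine $H$. The two diagonal generators commute, and any diagonal matrix $\textup{diag}(a,b,c)$ preserving $X^d+Y^d+Z^d$ must satisfy $a^d = b^d = c^d$, so modulo scalars the diagonal stabilizer of $F_d$ in $\textup{PGL}(3,\mathbb{C})$ is exactly $N \cong \mathbb{Z}_d^2$, generated by $[\zeta X,Y,Z]$ and $[X,\zeta Y,Z]$. The remaining two transformations realize the symmetric group $S_3$ on the coordinates and normalize $N$ by permuting its two factors, so $H = N \rtimes S_3$ has order $6d^2$ with the projection onto $S_3$ split by construction.

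For the matching upper bound, I would first invoke the classical fact that any automorphism of a smooth plane curve of degree $d \ge 4$ extends to an element of $\textup{PGL}(3,\mathbb{C})$ (via the canonical embedding when $d=4$, and via uniqueness of the linear system $g^2_d$ when $d \ge 5$). A direct Hessian computation gives $H(F_d) = d^3(d-1)^3(XYZ)^{d-2}$, so the flex locus of $F_d$ is precisely its intersection with the coordinate triangle $L_1 \cup L_2 \cup L_3$, a set $S$ of $3d$ distinct points. This flex set is intrinsic and hence preserved by $\textup{Aut}(F_d)$. Using the genus formula $g=(d-1)(d-2)/2$ together with Oikawa's inequality (Theorem \ref{thm:OA}(1)) applied with $k=|S|=3d$ yields
\[ |\textup{Aut}(F_d)| \le 12(g-1) + 6 \cdot 3d = 6(d^2-3d) + 18d = 6d^2. \]
Combined with $H \subset \textup{Aut}(F_d)$, this forces $\textup{Aut}(F_d) = H$, which establishes the claimed split short exact sequence and the order formula.

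The main obstacle is the lift-to-projective-transformation step, which requires a separate justification for $d=4$ (canonical embedding of the genus-three curve) and for $d \ge 5$ (uniqueness of the plane $g^2_d$ on a smooth plane curve); the remainder is essentially bookkeeping once Oikawa's inequality and the Hessian computation are available. A secondary subtlety worth flagging is the normalization in the diagonal computation: one must impose the projective equality $a^d=b^d=c^d$ rather than individual constraints on each $a^d$, $b^d$, $c^d$, otherwise one would overcount the diagonal stabilizer and land on a group strictly larger than $\mathbb{Z}_d^2$. A geometric alternative that avoids Oikawa entirely would be to characterize the vertices $P_1,P_2,P_3$ as the three points through which $d$ total-inflection tangent lines each pass and then compute the stabilizer of this triangle, but the Oikawa route is shorter and quantitatively sharp.
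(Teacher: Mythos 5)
Your proposal is correct and follows essentially the same route as the paper: the lower bound comes from the explicit subgroup $H$ of order $6d^2$, and the matching upper bound comes from Oikawa's inequality applied to the set of $3d$ total inflection points of $F_d$, giving $12(g-1)+18d=6d^2$. The extra details you supply (the Hessian computation identifying the flex locus and the lift of automorphisms to $\textup{PGL}(3,\mathbb{C})$) are correct and merely make explicit what the paper takes for granted.
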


\begin{proof}
Let $H$ be the subgroup of $\textup{Aut}(F_d)$ generated by four transformations $[\zeta X,Y,Z]$, $[X,\zeta Y,Z]$, $[Y,Z,X]$ and $[X,Z,Y]$. 
This is a semidirect product of $S_3$ acting on $\mathbb{Z}_d^2$. 
In particular we have the inequality $|\textup{Aut}(F_d)| \ge |H| = 6d^2$.
Thus it suffices to verify that $|\textup{Aut}(F_d)| \le 6d^2$.

Recall that Fermat curve $F_d$ has exactly $3d$ total inflection points.
They constitute a subset of $F_d$ fixed by its full group of automorphisms.
Hence it follows from Oikawa's inequality that
\[ |\text{Aut}(F_d)| \le 12(g-1) + 6 \cdot 3d = 6d^2, \]
where $g=(d-1)(d-2)/2$ is the genus of $F_d$.
\end{proof}

\begin{rem} \label{rem:ord_F}
\normalfont
It is easy to check that the order of any element of $\textup{Aut}(F_d)$ is at most $2d$.
\end{rem}

We also describe the structure of the full automorphism groups of Klein curves.
It is probably known, though the author could not find the explicit proof in the literature.

\begin{prop} \label{prop:K} 
If $d \ge 5$ then the full group of automorphisms of Klein curve 
\[ K_d: X Y^{d-1}+Y Z^{d-1}+Z X^{d-1}=0 \]
is generated by two transformations $[\xi^{-(d-2)} X,\xi Y,Z]$ and $[Y,Z,X]$,
where $\xi$ is a primitive $(d^2-3d+3)$-rd root of unity. 
It is isomorphic to a semidirect product of $\mathbb{Z}_3$ acting on $\mathbb{Z}_{d^2-3d+3}$, in other words, there exists a split short exact sequence of groups
\[ 1 \to \mathbb{Z}_{d^2-3d+3} \to \textup{Aut}(K_d) \to \mathbb{Z}_3 \to 1. \] 
In particular $|\textup{Aut}(K_d)| = 3(d^2-3d+3)$.
\end{prop}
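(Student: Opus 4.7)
The plan is to exhibit explicit generators for $\textup{Aut}(K_d)$, verify their relations, and establish a matching upper bound. First, by direct substitution I would check that $\sigma = [\xi^{-(d-2)}X, \xi Y, Z]$ and $\tau = [Y, Z, X]$ lie in $\textup{Aut}(K_d)$: applying $\sigma$ to $F = XY^{d-1} + YZ^{d-1} + ZX^{d-1}$ multiplies $F$ by $\xi$, using $\xi^{d^2-3d+3} = 1$, while $\tau$ cyclically permutes the three monomials. A conjugation calculation then gives $\tau\sigma\tau^{-1} = \sigma^{d-2}$, and the identity $(d-2)^3 - 1 = (d-3)(d^2-3d+3)$ verifies this prescribes a well-defined $\mathbb{Z}_3$-action on $\langle\sigma\rangle$. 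Thus $H := \langle\sigma,\tau\rangle\subseteq\textup{Aut}(K_d)$ is a semidirect product $\mathbb{Z}_{d^2-3d+3}\rtimes\mathbb{Z}_3$ of order $3(d^2-3d+3)$.

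For the matching upper bound, I would argue that $\{P_1,P_2,P_3\}$ is an intrinsic invariant of $K_d$. Each $P_i$ is a point at which the tangent line has contact of order $d-1$ with $K_d$: the tangent at $P_1$ is $L_3 = \{Z=0\}$, with intersection divisor $L_3\cdot K_d = (d-1)P_1 + P_2$, and similarly for $P_2,P_3$ by $\tau$-symmetry. Since tangent-contact is projectively intrinsic, every element of $\textup{Aut}(K_d)$ permutes the set $S$ of points of $K_d$ with this super-inflection property. The crux will be to show $S = \{P_1,P_2,P_3\}$: for any $P = (a:b:c)\in K_d$ not lying on a coordinate axis (so $abc\ne 0$, since direct computation gives $K_d\cap L_i = \{P_{i+1},P_{i+2}\}$), I would parametrize the tangent line at $P$ and expand $F$ restricted to it, showing that the vanishing of the $t^2,\ldots,t^{d-2}$ coefficients required for super-inflection is incompatible with $P$ being on $K_d$ with $abc\ne 0$.

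Granted the invariance of $\{P_1,P_2,P_3\}$, the natural homomorphism $\textup{Aut}(K_d)\to\textup{Sym}\{P_1,P_2,P_3\}\cong S_3$ is defined. Its image lies in $\mathbb{Z}_3$: a hypothetical automorphism inducing the transposition $P_1\leftrightarrow P_2$ (fixing $P_3$) would have matrix form $[\alpha Y,\beta X,\gamma Z]$, and applying it to $F$ produces the monomial $\alpha\beta^{d-1}YX^{d-1}$, which is absent from $F$ for $d\ge 4$. The kernel consists of diagonal projective transformations preserving $K_d$; solving $\alpha\beta^{d-1} = \beta\gamma^{d-1} = \gamma\alpha^{d-1}$ modulo scalars (setting $\gamma = 1$ yields $\beta = \alpha^{d-1}$ and $\alpha^{d^2-3d+3} = 1$) gives exactly $\langle\sigma\rangle$, cyclic of order $d^2-3d+3$. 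Hence $|\textup{Aut}(K_d)|\le 3(d^2-3d+3) = |H|$, so $\textup{Aut}(K_d) = H$ and the sequence splits via $\tau$.

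The main obstacle will be the step $S = \{P_1,P_2,P_3\}$: super-inflection is a system of $d-3$ polynomial conditions on top of $F = 0$, and a naive Bezout-style argument using the Hessian (which cuts out the flex divisor of degree $3d(d-2)$) yields only the loose bound $|S|\le 3d(d-2)/(d-3)$. The proof will need to use the explicit symmetric form of $F$—for instance, by computing the Hessian of $F$ explicitly and examining how the higher osculating conditions intersect $K_d\setminus(L_1\cup L_2\cup L_3)$—to rule out extraneous super-inflection points off the coordinate triangle.
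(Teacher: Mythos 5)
Your overall architecture is sound, and where it is complete it is actually sharper than the paper's argument: the paper also starts from the invariance of $\{P_1,P_2,P_3\}$, but then only extracts the weaker bound $|\textup{Aut}(K_d)|\le 6(d^2-3d+3)$ via Oikawa's inequality and must separately exclude involutions by a case check, whereas your stabilizer analysis (kernel of $\textup{Aut}(K_d)\to S_3$ is the diagonal group of order $d^2-3d+3$, image avoids transpositions because $X^{d-1}Y$ does not occur in $F$) lands on $3(d^2-3d+3)$ in one step. The relation $\tau\sigma\tau^{-1}=\sigma^{d-2}$ and the computation of the diagonal stabilizer are both correct.

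However, there is a genuine gap, and you have named it yourself: the claim that $P_1,P_2,P_3$ are the \emph{only} points of $K_d$ whose tangent has contact of order $d-1$. Everything downstream — the very existence of the homomorphism $\textup{Aut}(K_d)\to\textup{Sym}\{P_1,P_2,P_3\}$ — depends on this, and your proposal offers only a plan (``parametrize the tangent line and expand $F$'') together with the correct observation that the naive Hessian/B\'ezout count $3d(d-2)/(d-3)$ is far too weak to isolate three points. The paper does not prove this fact either; it cites Kato (\cite[Lemma 2.3]{Ka}), which states precisely that $K_d$ has exactly three $(d-3)$-inflection points. So as written your argument is incomplete at its load-bearing step: you must either carry out the explicit elimination showing that no point $(a:b:c)$ with $abc\ne 0$ satisfies the $d-3$ higher osculation conditions on $K_d$, or invoke Kato's lemma as the paper does. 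Once that is supplied, the remainder of your proof goes through and the conclusion $\textup{Aut}(K_d)=\langle\sigma,\tau\rangle\cong\mathbb{Z}_{d^2-3d+3}\rtimes\mathbb{Z}_3$ follows.
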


\begin{proof}
Let $H$ be the subgroup of $\textup{Aut}(K_d)$ generated by two transformations $[Y,Z,X]$ and $[\xi^{-(d-2)} X,\xi Y,Z]$,
where $\xi$ is a primitive $(d^2-3d+3)$-rd root of unity.
This is a semidirect product of $\mathbb{Z}_3$ acting on $\mathbb{Z}_{d^2-3d+3}$. 
In particular $|\textup{Aut}(K_d)|$ is a multiple of $|H| = 3(d^2-3d+3)$.

It remains to show that $|\text{Aut}(K_d)| \le 3(d^2-3d+3)$.
Kato proved that Klein curve $K_d$ has exactly three $(d-3)$-inflection points $P_1=(1:0:0)$, $P_2=(0:1:0)$ and $P_3=(0:0:1)$ (see \cite[Lemma 2.3]{Ka}).
They constitute a subset of $K_d$ fixed by its full group of automorphisms.
It follows from Oikawa's inequality that
\[ |\text{Aut}(K_d)| \le 12(g-1) + 6 \cdot 3 = 6(d^2-3d+3), \]
where $g=(d-1)(d-2)/2$ is the genus of $K_d$.
Hence it is sufficient to verify that $\textup{Aut}(K_d)$ is of odd order. 

Suppose that $K_d$ has an involution $\iota$.
Then it fixes at least one $(d-3)$-inflection point.
Without loss of generality, we may assume that $\iota$ fixes $P_3$.
Then it also fixes the tangent line $L_2: Y=0$ to $K_d$ at $P_3$, the set of the remaining two $(d-3)$-inflection points $\{ P_1, P_2 \}$
and the set $\{ L_1, L_3 \}$.
Therefore $\iota = [\alpha X, \beta Y, Z]$ ($(\alpha, \beta) = (1, -1), (-1,1)$ or $(-1,-1)$) or $[\gamma Y, \gamma X, Z]$ $(\gamma = \pm1)$.
It is easy to check that such an involution does not fix $K_d$.
Hence $K_d$ has no involution, or equivalently, $\textup{Aut}(K_d)$ is of odd order.
\end{proof}

The following is a well-known classical result:

\begin{prop} \label{prop:cyclic}
If a subgroup $G$ of $\textup{Aut}(C)$ fixes a point on $C$, then $G$ is cyclic.
\end{prop}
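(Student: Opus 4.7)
The plan is to exhibit an injective homomorphism $\chi : G \to \mathbb{C}^{*}$. Since $G$ is finite (by the Hurwitz bound, Theorem \ref{thm:Hur}) and every finite subgroup of $\mathbb{C}^{*}$ is a group of roots of unity and hence cyclic, this will immediately give the result.

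Let $P \in C$ be the point fixed by $G$. Since $C$ is smooth at $P$, the cotangent space $\mathfrak{m}_{P}/\mathfrak{m}_{P}^{2}$ is one-dimensional over $\mathbb{C}$. The induced $\mathbb{C}$-linear action of $G$ on this space gives a character
\[
\chi : G \longrightarrow \mathrm{GL}(\mathfrak{m}_{P}/\mathfrak{m}_{P}^{2}) = \mathbb{C}^{*},
\]
sending $\sigma$ to the scalar by which it multiplies any local uniformizer at $P$ modulo $\mathfrak{m}_{P}^{2}$.

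The crux is to show $\ker\chi$ is trivial. Given $\sigma \in \ker\chi$ of order $n$, choose a local uniformizer $t$ at $P$. Each $(\sigma^{i})^{*}t$ satisfies $(\sigma^{i})^{*}t \equiv t \pmod{\mathfrak{m}_{P}^{2}}$, because $\chi(\sigma^{i})=1$. Form the average
\[
s \;=\; \frac{1}{n}\sum_{i=0}^{n-1}(\sigma^{i})^{*}(t).
\]
Then $s$ is still a uniformizer at $P$ (its class modulo $\mathfrak{m}_{P}^{2}$ equals that of $t$), and by construction $\sigma^{*}s = s$. Hence $\sigma$ acts trivially on the completed local ring $\widehat{\mathcal{O}}_{C,P} \cong \mathbb{C}[[s]]$, so it acts as the identity on an analytic neighborhood of $P$. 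Since $\sigma$ is a holomorphic automorphism of the connected Riemann surface $C$, the identity principle forces $\sigma = \mathrm{id}_{C}$. Therefore $\chi$ is injective, and $G \hookrightarrow \mathbb{C}^{*}$ is cyclic.

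The main obstacle is precisely the injectivity step: propagating triviality on the one-dimensional cotangent space to triviality on the whole curve. The linearization-by-averaging trick resolves this cleanly, but it uses two facts essentially: the finiteness of the order of $\sigma$ (so the average makes sense and lands back in $\widehat{\mathcal{O}}_{C,P}$), and the smoothness of $C$ at $P$ (so the completed local ring is a formal power series ring in one variable generated by any uniformizer).
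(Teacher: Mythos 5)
Your proof is correct, and it is the standard argument: the stabilizer of a smooth point acts faithfully on the one-dimensional cotangent space $\mathfrak{m}_P/\mathfrak{m}_P^2$, hence embeds in $\mathbb{C}^*$, and the averaging trick is the right way to establish faithfulness (the averaged uniformizer $s$ is a genuine rational function regular at $P$, so $\sigma^*s=s$ really does force $\sigma=\mathrm{id}$ near $P$ and then globally by the identity principle). There is nothing to compare against here: the paper states Proposition \ref{prop:cyclic} as ``a well-known classical result'' and supplies no proof, so you have in effect filled in a gap the author chose to leave to the literature.
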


For cyclic groups of automorphisms of smooth plane curves, we have the following lemma.

%%% 非特異平面曲線と巡回群 %%%%%%%%%%%%%%%%%%%%%%%%%%%%%%%%%%%%%%%%%%%%%%%%%%%%%%%%%%%%%%%%%%%%%%%
\begin{lemma} \label{lem:cyclic}
Let $C$ be a smooth plane curve of degree $d$, $G$ a cyclic subgroup of  $\textup{Aut}(C)$.
Then $|G| \le d(d-1)$. 
Furthermore, if $G$ is generated by a homology with center $P$, then $|G|$ is a factor of $d-1$ $($resp.~$d)$
if $P \in C$ $($resp.~$P \not \in C)$.
The equality $|G|=d-1$ $($resp.~$|G|=d)$ holds if and only if $P$ is an inner $($resp.~outer$)$ Galois point for $C$
and $G$ is the Galois group at the point.
\end{lemma}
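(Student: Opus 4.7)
The plan is to diagonalize a generator of $G$ in $\textup{PGL}(3,\mathbb{C})$ and exploit the semi-invariance of the defining polynomial $F$ of $C$. After a projective change of coordinates, I write $g=[X,\zeta^a Y,\zeta^b Z]$ with $\zeta$ a primitive $n$-th root of unity, where $n=|G|$ and $\gcd(a,b,n)=1$; the order of $g$ then equals $\textup{lcm}(\alpha,\beta)$ with $\alpha=\textup{ord}(\zeta^a)$ and $\beta=\textup{ord}(\zeta^b)$. Invariance of $C$ forces every monomial $X^iY^jZ^k$ appearing in $F$ to obey a common congruence $aj+bk\equiv c\pmod{n}$ for some fixed $c$.

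For part (b), I take $g=[X,Y,\zeta Z]$, so the congruence becomes $k\equiv c\pmod{n}$. Irreducibility of $C$ prevents $Z$ from dividing $F$, giving a monomial with $k=0$ and hence $c\equiv 0\pmod{n}$. If $P=P_3\in C$ then $Z^d\notin F$, and smoothness at $P_3$ forces $XZ^{d-1}$ or $YZ^{d-1}$ into $F$, giving $c\equiv d-1$ and therefore $n\mid d-1$. If $P\notin C$ then $Z^d\in F$ gives $c\equiv d$, so $n\mid d$. For part (c), under equality the allowable values of $k$ collapse to $\{0,d-1\}$ (resp.~$\{0,d\}$), so $F=L(X,Y)Z^{d-1}+G_d(X,Y)$ (resp.~$F=\gamma Z^d+G_d(X,Y)$). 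The projection $\pi_P:C\to L_3$ then has degree $d-1$ (resp.~$d$) and its fibers coincide with the $\langle g\rangle$-orbits, so it is a cyclic Galois cover with Galois group $G$, making $P$ an inner (resp.~outer) Galois point. Conversely, any element of the Galois group of $\pi_P$ preserves each fiber, hence fixes every line through $P$ setwise, and is therefore a homology with center $P$, reversing the implication.

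For part (a), if $g$ is a homology then $n\le d$ already follows from (b). Otherwise $g$ has three distinct fixed points $P_1,P_2,P_3$ and three invariant lines $L_1,L_2,L_3$; at each $P_i$ lying on $C$, the tangent to $C$ must equal one of the two $L_j$ ($j\ne i$) through $P_i$, because the tangent is necessarily a $g$-invariant line through $P_i$. A case analysis on $r=|\{i:P_i\in C\}|$ then yields:
\begin{itemize}
\item $r=0$: $X^d,Y^d,Z^d\in F$ give $\alpha,\beta\mid d$, hence $n\mid d$.
\item $r=1$ (say $P_3\in C$): $X^d,Y^d\in F$ together with the presence of $XZ^{d-1}$ or $YZ^{d-1}$ in $F$ give $\alpha\mid d$ and either $\beta\mid d-1$ or $\beta\mid d(d-1)$, so $n=\textup{lcm}(\alpha,\beta)\mid d(d-1)$.
\item $r=2$ (say $P_2,P_3\in C$): the four possible tangent configurations yield $n\mid d-1$, $n\mid (d-1)^2$, or $n\mid d(d-2)$, each bounded by $d(d-1)$.
\item $r=3$: either the tangents permute the $L_j$ cyclically, forcing $n\mid d^2-3d+3$, or they do not, forcing $n\mid (d-1)(d-2)$.
\end{itemize}
In every case $n\le d(d-1)$, completing part (a).

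The main obstacle is the bookkeeping for $r\ge 2$, where the smoothness-forcing monomials at distinct fixed points produce several congruences on $(a,b,n)$ that must be combined with $\gcd(a,b,n)=1$ to extract the bound on $\textup{lcm}(\alpha,\beta)$. Each manipulation is elementary, but the enumeration of tangent assignments is unavoidable; once it is carried out, the unified bound $|G|\le d(d-1)$ emerges from the compatibility of the divisibility conditions $\alpha\mid d$ or $d-1$ and $\beta\mid d$ or $d-1$ (up to the small-order corrections $(d-1)(d-2)$ and $d^2-3d+3$ in the more ramified configurations).
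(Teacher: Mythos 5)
Your proposal is correct in substance, but it reaches the bound $|G|\le d(d-1)$ by a genuinely different route from the paper. For the cyclic bound, the paper diagonalizes the generator and then applies Arakawa's inequality (Theorem \ref{thm:OA}\,(2)) to the invariant sets $C\cap L_i$ and $\{P_i\}$, splitting into three short cases according to $|C\cap V|$; only in the case $C\cap V=\emptyset$ does it argue via monomials (obtaining $|G|\mid d$ from the core $X^d+Y^d+Z^d$). You instead run the monomial/eigenvalue-congruence argument in every case, using smoothness at each fixed point on $C$ to force terms like $X^{d-1}Y$ or $X^{d-1}Z$ into $F$ and the fact that a non-homology has exactly three invariant lines to pin down the tangent at each such point. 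Your bookkeeping checks out (I verified the congruences in all tangent configurations for $r=2,3$), and it buys strictly more than the paper's proof: you get divisibility relations such as $n\mid d$, $n\mid (d-1)^2$, $n\mid d(d-2)$, $n\mid d^2-3d+3$, $n\mid(d-1)(d-2)$ rather than mere inequalities --- information the paper only extracts later (and in a commented-out claim in the source). The price is the unavoidable enumeration of tangent assignments, which the Riemann--Hurwitz route sidesteps entirely. For the homology statement the two arguments also differ mildly: the paper factors the quotient map $C\to C/G$ through the projection $\pi_P$ to get $|G|\mid\deg\pi_P$ and reads off the Galois condition from equality of degrees, while you again use the congruence $k\equiv 0\pmod{n}$ together with irreducibility ($Z\nmid F$) and smoothness at $P$; both are sound, though your converse direction (Galois group elements are homologies with center $P$) quietly uses that automorphisms of a smooth plane curve of degree $\ge 4$ are projective, a fact the paper also assumes throughout. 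No genuine gap, just substantially more case analysis than the published proof.
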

%%%%%%%%%%%%%%%%%%%%%%%%%%%%%%%%%%%%%%%%%%%%%%%%%%%%%%%%%%%%%%%%%%%%%%%%%%%%%%%%%%%%%%%%%%%%%%%%%%%%

\begin{proof}
Let $\sigma$ be a generator of $G$ and $g=(d-1)(d-2)/2$ the genus of $C$.
We may assume that $\sigma$ is represented by a diagonal matrix.
Then $G$ fixes each of three lines $L_1:X=0$, $L_2:Y=0$ and $L_3:Z=0$ and each of three points $P_1=(1:0:0)$, $P_2=(0:1:0)$ and $P_3=(0:0:1)$.
Set $S_i:=C \cap L_i$ for $i=1,2$ and $3$ and $V:= \{ P_1, P_2, P_3 \}$. 
Each $S_i$ is a non-empty subset of $C$ of order at most $d$ and is fixed by $G$.
%%% Then there exist at least three non-empty distinct subsets of $C$ among $S_1$, $S_2$, $S_3$, $\{ P_1 \}$, $\{ P_2 \}$ and $\{ P_3 \}$, each of which is fixed by $G$ and of order at most $d$.

There are three cases according to the intersection of $C$ and $V$.

\vspace{0.5em}

\noindent
Case\,(a) $|C \cap V| \ge 2$.
We may assume that $P_1, P_2 \in C$. 
Then at least one of the three sets $S_1 \setminus \{ P_2 \}$, $S_2 \setminus \{ P_1 \}$ and $S_3 \setminus \{ P_1, P_2 \}$ is a non-empty subset of $C$ of order at most $d-1$ and is fixed by $G$.
Hence it follows from Arakawa's inequality that 
\[
|G| \le 2(g-1) + 1 + 1 + d-1 = d(d-2) +1 < d(d-1).
\]

\vspace{0.5em}

\noindent
Case\,(b) $|C \cap V| = 1$. 
We may assume that $P_1 \in C$ and $P_2, P_3 \not \in C$.
Then either $S_2 \setminus \{ P_1 \}$ or $S_3 \setminus \{ P_1 \}$ is a non-empty subset of $C$ of order at most $d-1$ and is fixed by $G$.
Note that $G$ also fixes $\{ P_1 \}$ and $S_1$, each of which is distinct from the above sets.
By using Arakawa's inequality again 
\[
|G| \le 2(g-1) + (d-1) + 1 + d = d(d-1).
\]

\vspace{0.5em}

\noindent
Case\,(c) $C$ and $V$ are disjoint.
Then we may assume that $C$ is defined by a homogeneous polynomial whose core is $X^d+Y^d+Z^d$.
Take a diagonal matrix $\textup{diag} (\alpha, \beta, 1)$ representing $\sigma$.
Since $\sigma$ fixes the monomial $X^d+Y^d+Z^d$ up to a constant, we see that $\alpha^d = \beta^d = 1$, which implies that $\sigma^d =1$, i.e., $|G|$ is a factor of $d$. 

\vspace{0.5em}

Assume that $\sigma$ is a homology. 
Then we may assume that $\sigma = [X,Y,\zeta Z]$, where $\zeta$ is a root of unity.
Its center is $P_3$ and its axis is $L_3$.
Let $\pi: C \to C/G$ be the quotient map, $\pi_{P_3}: C \to \mathbb{P}^1$ the projection with center $P_3$ ($\pi_{P_3}((X:Y:Z)) = (X:Y)$).
Then $\psi: C/G \to \mathbb{P}^1$ ($[x] \mapsto \pi_{P_3}(x)$) is well-defined, where $[x]$ is the equivalence class of $x \in C$.
We thus have a commutative diagram 
%%%%%%%%%%%%%%%%%%%%%%%%%%%%%%%%%%%%%%%%%%%%%%%%%%%%%%%%%%%%%
\[
	\xymatrix{
    C \ar[rr]^{\hspace{-1em} \pi} \ar[rd]_{\pi_{P_3}} & & C/G \ar[ld]^{\psi} \\
	& \mathbb{P}^1. \ar@{}[u]|*+++{\circlearrowright} & \\
	}
\]
%%%%%%%%%%%%%%%%%%%%%%%%%%%%%%%%%%%%%%%%%%%%%%%%%%%%%%%%%%%%%
In particular $|G|=\textup{deg} \pi$ is a factor of $\textup{deg} \pi_{P_3}$,
which is equal to $d-1$ (resp.~$d$) if $P_3 \in C$ (resp. $P_3 \notin C$).

If $|G|=\textup{deg} \, \pi$, then $\pi$ coincides the quotient map, which implies that $P_3$ is a Galois point for $C$ and $G$ is the Galois group at $P_3$.
\end{proof}

Combining Proposition \ref{prop:cyclic} and Lemma \ref{lem:cyclic}, we obtain a characterization of 
the plane curve $YZ^{d-1} + X^d + Y^d =0$.

\begin{prop} \label{prop:F_{d,d-1}}
For $d \ge 5$, let $F_{d,d-1}$ be the smooth plane curve defined by the equation $YZ^{d-1} + X^d + Y^d =0$.
Then $\textup{Aut} (F_{d,d-1})$ is isomorphic to $\mathbb{Z}_{d(d-1)}$, a cyclic group of order $d(d-1)$. 
Moreover, $F_{d,d-1}$ is the only smooth plane curve of degree $d$ with an automorphism of order $d(d-1)$.
\end{prop}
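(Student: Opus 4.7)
The plan is to exhibit an explicit cyclic automorphism of order $d(d-1)$, then prove that the full automorphism group $G := \textup{Aut}(F_{d,d-1})$ fixes a point of $F_{d,d-1}$ (which by Proposition \ref{prop:cyclic} and Lemma \ref{lem:cyclic} forces equality), and finally establish uniqueness by adapting the monomial analysis from Case (b) of the proof of Lemma \ref{lem:cyclic}.

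First, let $\zeta$ be a primitive $d(d-1)$-th root of unity and set $\sigma := [\zeta X,\zeta^d Y,Z]$. Since $\zeta^d$ is a primitive $(d-1)$-th root of unity, the three monomials $X^d$, $Y^d$ and $YZ^{d-1}$ are common $\sigma$-eigenvectors with eigenvalue $\zeta^d$, so $\sigma$ preserves $F_{d,d-1}$; a direct computation gives $\textup{ord}(\sigma) = d(d-1)$, and hence $|G| \ge d(d-1)$.

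To identify $G$, I would compute the total inflection points of $F_{d,d-1}$: intersecting $C$ with the Hessian and checking contact orders shows that the $d$ total flexes are precisely $P_3 = (0:0:1)$ and the $d-1$ points $(0:\eta:1)$ with $\eta^{d-1} = -1$, all lying on $L_1 : X = 0$, and that the tangent line at every such flex passes through $P_1 = (1:0:0)$. Therefore $G$ preserves $L_1$ (the unique line through all total flexes) and fixes $P_1$ (the common intersection of the $d \ge 5$ distinct tangent lines), so $G$ lies in the stabilizer of the flag $(P_1, L_1)$, which is a conjugate of $\textup{PBD}(2,1)$. The resulting exact sequence $1 \to N \to G \to G' \to 1$ has $N = \{[\lambda X, Y, Z] : \lambda^d = 1\} \simeq \mathbb{Z}_d$ (the condition $\lambda^d = 1$ being forced by preservation of $F_{d,d-1}$), while the image $\bar\sigma \in G'$ has order $d-1$, fixes $P_3 \in L_1 \cap C$, and cyclically permutes the other $d-1$ total flexes. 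Running through the classification of finite subgroups of $\textup{PGL}(2,\mathbb{C})$: if $G' = \mathbb{Z}_{k(d-1)}$ with $k > 1$ then a generator $g$ satisfies $g^{d-1} \ne 1$ in $G'$ yet acts trivially on all $d-1 \ge 4$ non-$P_3$ flexes, contradicting that a non-identity M\"{o}bius transformation has at most two fixed points; any dihedral extension would contain a flip swapping the $\bar\sigma$-fixed points $P_3$ and $P_2 = (1:0) \notin L_1 \cap C$, violating preservation of the total-flex set; $A_4$ has no element of order $\ge 4$; and $S_4$ (in the case $d = 5$) and $A_5$ (in the case $d = 6$) cannot preserve a $d$-point set since their minimum orbit sizes on $\mathbb{P}^1$ are $6$ and $12$, respectively. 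Hence $G' = \langle \bar\sigma \rangle \simeq \mathbb{Z}_{d-1}$ fixes $P_3$, so $G$ fixes $P_3 \in F_{d,d-1}$; Proposition \ref{prop:cyclic} then gives that $G$ is cyclic, Lemma \ref{lem:cyclic} gives $|G| \le d(d-1)$, and equality forces $G = \langle \sigma \rangle \simeq \mathbb{Z}_{d(d-1)}$.

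For uniqueness, suppose $C$ is a smooth plane curve of degree $d \ge 5$ with an automorphism $\tau$ of order $d(d-1)$. From the proof of Lemma \ref{lem:cyclic}, only Case (b) can attain the bound $d(d-1)$, so after a coordinate change $\tau = \textup{diag}(\alpha, \beta, 1)$ and exactly one of $P_1, P_2, P_3$ lies on $C$; by symmetry we may assume $P_3 \in C$. Each monomial of the defining polynomial $F$ is a common $\tau$-eigenvector with eigenvalue $\mu = \alpha^d = \beta^d$, and smoothness of $C$ at $P_3$ requires at least one of $XZ^{d-1}, YZ^{d-1}$ to appear, forcing $\alpha^{d-1} = 1$ or $\beta^{d-1} = 1$; the assumption $\textup{ord}(\tau) = d(d-1)$ rules out the subcase in which both hold, and in the remaining subcase (up to the swap $X \leftrightarrow Y$) the eigenvalue equations restrict $F$ to the three monomials $X^d$, $Y^d$ and $YZ^{d-1}$. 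Rescaling the three coordinates normalizes the coefficients to $1$, exhibiting $C \cong F_{d,d-1}$. The main obstacle is the classification step for $G'$: ruling out every nontrivial cyclic, dihedral, or polyhedral extension of $\langle \bar\sigma \rangle$ compatible with preservation of the $d$-point set of total flexes, which is most delicate in the borderline cases $d = 5, 6$ where the polyhedral groups $S_4$ and $A_5$ must be excluded via orbit-size considerations.
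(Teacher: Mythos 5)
Your proof is correct in substance, but the first half takes a genuinely different route from the paper's. The paper identifies $\textup{Aut}(F_{d,d-1})$ in two lines: it cites Yoshihara's result that $(0:0:1)$ is the \emph{unique} inner Galois point of this curve, so every automorphism fixes it, whence cyclicity by Proposition \ref{prop:cyclic} and the bound $d(d-1)$ by Lemma \ref{lem:cyclic}; the lower bound comes from the two commuting homologies $[\zeta_d X,Y,Z]$ and $[X,Y,\zeta_{d-1}Z]$ generating $\mathbb{Z}_d\times\mathbb{Z}_{d-1}$ (equivalent to your single generator $[\zeta X,\zeta^d Y,Z]$). You replace that citation with a self-contained argument via total inflection points, the stabilizer of the flag $(P_1,L_1)$, and the classification of finite subgroups of $\textup{PGL}(2,\mathbb{C})$. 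This buys independence from the Galois-point literature at the cost of considerable length, and it hinges on the one assertion you do not actually carry out: that the $d$ points of $C\cap L_1$ are \emph{all} the total flexes. That claim is true but is the real content of the step --- the Hessian of $YZ^{d-1}+X^d+Y^d$ factors as a constant times $X^{d-2}Z^{d-3}\bigl(d(d-2)Y^{d-1}-Z^{d-1}\bigr)$, and one must check that the $d$ points on $Z=0$ have contact order only $d-1$ and that the $d(d-1)$ points on the third factor are simple intersections with the Hessian (an intersection-number count $d(d-2)+d(d-3)+d(d-1)=3d(d-2)$ confirms this), hence ordinary flexes. Your $G'$ analysis is sound once the orbit--stabilizer step behind ``$g^{d-1}$ acts trivially on the non-$P_3$ flexes'' is spelled out, and the exclusions of the dihedral and polyhedral cases are correct. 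The uniqueness argument is essentially identical to the paper's: restriction to Case (b) of the proof of Lemma \ref{lem:cyclic}, the eigenvalue relations $\beta=\alpha^d=\beta^d$, and the resulting elimination of all monomials other than $X^d$, $Y^d$, $YZ^{d-1}$.
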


\begin{proof}
First note that this curve has the unique inner Galois point $(0:0:1)$ (cf.~\cite[Theorem 4]{Y}), which is fixed by $\textup{Aut} (F_{d,d-1})$.
Hence $\textup{Aut} (F_{d,d-1})$ is cyclic by virtue of Proposition \ref{prop:cyclic}.
In particular $|\textup{Aut} (F_{d,d-1})| \le d(d-1)$ by Lemma \ref{lem:cyclic}.
On the other hand, $F_{d,d-1}$ has automorphisms $[\zeta_d X, Y, Z]$ and $[X,Y,\zeta_{d-1} Z]$, where $\zeta_k$ is a primitive $k$-th root of unity.
Thus $\textup{Aut} (F_{d,d-1})$ contains $\mathbb{Z}_d \times \mathbb{Z}_{d-1} \simeq \mathbb{Z}_{d(d-1)}$, which implies that $\textup{Aut} (F_{d,d-1}) \simeq \mathbb{Z}_{d(d-1)}$.

Let $C$ be a smooth plane curve of degree $d$ with an automorphism $\sigma$ of order $d(d-1)$.
We may assume that $\sigma$ is represented by a diagonal matrix and from the proof of Lemma \ref{lem:cyclic}
we may further assume that $P_3 \in C$ and $P_1, P_2 \not \in C$.
Then $C$ is defined by a homogeneous polynomial $F$ of the form $YZ^{d-1}+X^d+Y^d+\textup{(other terms)}$
after a suitable change of the coordinate system if necessary, because $C$ is smooth at $P_3$.

Assume that $\sigma = [\alpha X, \beta Y, Z]$.
Since it fixes $F$ up to a constant we see that $\beta = \alpha^d = \beta^d$,
which implies that $\beta = \zeta_{d-1}$ and $\alpha = \zeta_{d-1} \zeta_d$,
where $\zeta_k$ is a primitive $k$-th root of unity.
Then it is clear that $C$ has two automorphisms $[\zeta_d X, Y,Z]$ and $[X,Y,\zeta_{d-1} Z]$.

Let $X^i Y^j Z^k$ $(i+j+k=d)$ be any term of $F$ without its coefficient.
It is fixed by these automorphisms since they fixes $X^d+Y^d$.
Therefore $\zeta_d^i = \zeta_{d-1}^k =1$, which shows that $i \equiv 0$ (mod $d$) and $k \equiv 0$ (mod $d-1$).
Thus $(i,k) = (0,d-1)$, $(d,0)$ or $(0,0)$, or equivalently, $F=YZ^{d-1}+X^d+Y^d$.
Hence the conclusion follows.
\end{proof}

\begin{rem}
\normalfont
Kontogeorgis \cite{Ko} determined the group of automorphisms of the function field $F_{n,m}$ of the curve
$x^n+y^m+1=0$ in arbitrary characteristic $\ne 2, 3$ by using a different method.
%%% Proposition \ref{prop:F_{d,d-1}} is a special case of his result.
\end{rem}

In the end of this section, we refer to a theorem on finite groups of planar projective transformations,
which is a basic tool to prove Theorem \ref{thm:main1}.

%%%%%%%%%%%%%%%%%%%%%%%%%%%%%%%%%%%%%%%%%%%%%%%%%%%%%%%%%%%%%%%%%%%%%%%%%%%%%%%%%%%%%%%%%%%%%%%%%%%%%%%%%%%%%%%%%
\begin{thm} \textup{(\cite[Section 1-10]{M}, \cite[Theorem 4.8]{DI})} \label{thm:sub_PGL}
Let $G$ be a finite subgroup of $\textup{PGL}(3,\mathbb{C})$.
Then one of the following holds:
\begin{itemize}
\item[\textup{(a)}] $G$ fixes a line and a point not lying on the line;  
\item[\textup{(b)}] $G$ fixes a triangle; or
\item[\textup{(c)}] $G$ is primitive and conjugate to 
the icosahedral group $A_5$,  
the Klein group $\textup{PSL}(2, \mathbb{F}_7)$ $($of order $168)$, the alternating group $A_6$,
the Hessian group $H_{216}$ of order $216$ 
or its subgroup of order $36$ or $72$.
\end{itemize}
\end{thm}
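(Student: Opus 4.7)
The plan is to analyze $G$ by first lifting it to $\textup{GL}(3,\mathbb{C})$ and then applying standard representation-theoretic dichotomies to its natural $3$-dimensional representation. One begins by producing a finite subgroup $\tilde{G} \subset \textup{SL}(3,\mathbb{C})$ surjecting onto $G$: the preimage of $G$ under $\textup{SL}(3,\mathbb{C}) \to \textup{PGL}(3,\mathbb{C})$ is a central extension of $G$ by a group of order three, and in particular finite. By Maschke's theorem, the natural representation of $\tilde{G}$ on $V = \mathbb{C}^3$ is completely reducible.

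If $V$ is reducible, then either there is a $\tilde{G}$-invariant line $V_1$ with an invariant $2$-dimensional complement $V_2$, or $V$ decomposes into three invariant lines individually. In the first sub-case, $G$ fixes the point $[V_1]$ and the line $[V_2]$ in $\mathbb{P}^2$, and the fixed point does not lie on the fixed line because $V_1 \cap V_2 = 0$; this is case (a). The second sub-case is a simultaneous instance of (a) and (b), so it is absorbed.

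If $V$ is irreducible, we further split according to whether the action is primitive. Suppose it is imprimitive: there is a non-trivial direct-sum decomposition $V = W_1 \oplus \cdots \oplus W_k$ permuted transitively by $\tilde{G}$. Since $\dim V = 3$ is prime and the action is irreducible, $k = 3$ and each $W_i$ is one-dimensional. The three points $[W_i]$ are then permuted by $G$, and the three lines they span pairwise form a $G$-invariant triangle, yielding case (b). If the action is primitive, we must invoke the classification of finite primitive subgroups of $\textup{PGL}(3,\mathbb{C})$.

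The principal obstacle is precisely this last classification. The standard approach, going back to Blichfeldt and Mitchell, exploits primitivity to force a rigid normal structure on $\tilde{G}$: any abelian normal subgroup acts by scalars, centralizers of non-central elements of prime order are severely constrained, and one identifies explicit generators (such as the Hessian-type matrices $h_1,h_2,h_3$ of Remark \ref{rem:main1}) that must be present. Combining these restrictions with Jordan-type finiteness bounds reduces the problem to a short list of candidates, which one then matches order-by-order and composition-factor-by-composition-factor with $A_5$, $\textup{PSL}(2,\mathbb{F}_7)$, $A_6$, the Hessian group $H_{216}$, and its primitive subgroups of orders $36$ and $72$. The case analysis is technically delicate but purely classical, so we would simply cite \cite{M} and \cite{DI} rather than reproduce it.
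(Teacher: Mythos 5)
Your proposal is correct and follows essentially the same route as the paper: the paper's own justification (given in the remark following the theorem) likewise reduces everything to Mitchell's classification of the primitive case and uses Maschke's theorem to pass from a fixed point or line to a fixed point--line pair, exactly as you do via complete reducibility of the lifted representation on $\mathbb{C}^3$. The only cosmetic difference is that you re-derive the point/line/triangle trichotomy from the reducible/imprimitive dichotomy, whereas the paper also attributes that trichotomy to Mitchell; the substantive content, the list of primitive groups, is cited in both.
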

%%%%%%%%%%%%%%%%%%%%%%%%%%%%%%%%%%%%%%%%%%%%%%%%%%%%%%%%%%%%%%%%%%%%%%%%%%%%%%%%%%%%%%%%%%%%%%%%%%%%%%%%%%%%%%%%%

%%% 例外について：直線も三角形も固定しない群は以下の通り：
%%% 位数360：A_5を含む唯一の場合　　　　　　　　　　　　　　  360 = 2^3・3^2・5
%%% ヘッセ群(位数216)：位数3のhomologyをもつ唯一の場合　　　　216 = 2^3・3^3
%%% 位数168：S_4を含みA_5を含まない唯一の場合                 168 = 2^3・3・7
%%% ヘッセ群(位数72か36)：可換な2つのinvolutionをもたない場合  72 = 2^3・3^2, 36 = 2^2・3^2

\begin{rem}
\normalfont
To be precise, Mitchell \cite{M} proved that $G$ fixes a point, a line or a triangle unless $G$ is primitive and isomorphic to a group as in the case\,(c).
In fact, the first two cases are equivalent.
Indeed, if $G$ fixes a point (resp.~a line) then $G$ also fixes a line not passing through the point (resp.~a point not lying the line). 
It is a direct consequence of Maschke's theorem in group representation theory.
Combining this fact with Mitchell's result we obtain the above theorem.
\end{rem}

%%%%%%%%%%%%%%%%%%%%%%%%%%%%%%%%%%%%%%%%%%%%%%%%%%%%%%%%%%%%%%%%%%%%%%%%%%%%%%%%%%%%%%%%%%%%%%%%%%%%%%%%%%%%%%%%%%%%%%%%%%%%%%%%%%%%%%%%%%%%%%%%%%%%%%%%%%%%%%%%%%%%%%%%%%%%%%%%%%%%%%%%%
%%%%%%%%%%%%%%%%%%%% Section 3 終 %%%%%%%%%%%%%%%%%%%%%%%%%%%%%%%%%%%%%%%%%%%%%%%%%%%%%%%%%%%%%%%%%%%%%%%%%%%%%%%%%%%%%%%%%%%%%%%%%%%%%%%%%%%%%%%%%%%%%%%%%%%%%%%
%%%%%%%%%%%%%%%%%%%%%%%%%%%%%%%%%%%%%%%%%%%%%%%%%%%%%%%%%%%%%%%%%%%%%%%%%%%%%%%%%%%%%%%%%%%%%%%%%%%%%%%%%%%%%%%%%%%%%%%%%%%%%%%%%%%%%%%%%%%%%%%%%%%%%%%%%%%%%%%%%%%%%%%%%%%%%%%%%%%%%%%%%

%%%%%%■■■■■■■■■■■■■■■■■■■■■■■■■■■■■■■■■■■■■■■■■■■■■■■■■■■■■■■■■■■■
%%%%%%■■■■■■■■■■■■■■■ Section 4 始 ■■■■■■■■■■■■■■■■■■■■■■■■■■■■■■■■■■■■■■
%%%%%%■■■■■■■■■■■■■■■■■■■■■■■■■■■■■■■■■■■■■■■■■■■■■■■■■■■■■■■■■■■■
\section{Automorphism groups of smooth plane curves: Case\,(A)} \label{Sec:Case(A)}

In what follows $C$ always denotes a smooth plane curve of degree $d \ge 4$ 
and let $G$ be a finite subgroup of $\textup{Aut}(C)$,
which is also considered as a subgroup of $\textup{PGL}(3,\mathbb{C})$.
We identify an element $\sigma$ of $G$ with the corresponding planar projective transformation, which is also denoted by $\sigma$.

The following two sections are wholly devoted to prove Theorem \ref{thm:main1}.
From Theorem \ref{thm:sub_PGL} there are three cases:
\begin{itemize}
\item[(A)] $G$ fixes a line and a point not lying on the line.
\item[(B)] $G$ fixes a triangle and there exists neither a line nor a point fixed by $G$.
\item[(C)] $G$ is primitive and conjugate to a group described in Theorem \ref{thm:sub_PGL}\null. 
\end{itemize}
Note that the last case leads us to the statement\,(c) in Theorem \ref{thm:main1}.
We argue the other cases one by one and consider the first case in this section.

\vspace{1em}

%%%%%%%%%%%%%%%%%%%%%%%%%%%%%%%%%%%%%%%%%%%%%%%%%%%%%%%%%%%%%%%%%%%%%%%%%%%%%%%%%%%%%%%%%%%%%%%%%%%%%%%%%%%%%%%%%%%%%%%%%%
%%%   Case(A)   %%%%%%%%%%%%%%%%%%%%%%%%%%%%%%%%%%%%%%%%%%%%%%%%%%%%%%%%%%%%%%%%%%%%%%%%%%%%%%%%%%%%%%%%%%%%%%%%%%%%%%%%%%
%%%%%%%%%%%%%%%%%%%%%%%%%%%%%%%%%%%%%%%%%%%%%%%%%%%%%%%%%%%%%%%%%%%%%%%%%%%%%%%%%%%%%%%%%%%%%%%%%%%%%%%%%%%%%%%%%%%%%%%%%%
\noindent
\textbf{Case\,(A):} $G$ fixes a line $L$ and a point $P$ not lying on $L$.

%%%%%%%%%%%%%%%%%%%%%%%%%%%%%%%%%%%%%%%%%%%%%%%%%%%%%%%%%%%%%%%%%%%%%%%%%%%%%%%%%%%%%%%%%%%%%
%%% 2014/2/2 証明を大幅に修正．(a-i)と(a-ii)を完全に分ける． %%%%%%%%%%%%%%%%%%%%%%%%%%%%%%%%%%%%%%%%%%%
If $P \in C$ then $\textup{Aut}(C)$ is cyclic by virtue of Proposition \ref{prop:cyclic}.
Hence (a-i) in Theorem \ref{thm:main1} follows from and Lemma \ref{lem:cyclic} and Proposition \ref{prop:F_{d,d-1}}.

In what follows we assume that $P \not \in C$ and prove that the statement\,(a-ii) or (b-i) in Theorem \ref{thm:main1} holds.  
We may further assume that $L$ is defined by $Z=0$ and $P=(0:0:1)$.
Then $G$ is a subgroup of $\textup{PBD}(2,1)$.
Let $\rho$ be the restriction of the natural map from $\textup{PBD}(2,1)$ to $\textup{PGL}(2,\mathbb{C})$.
Then there exists a short exact sequence of groups
\[ 1 \to N \to G \stackrel{\rho} \to G' \to 1, \] 
where $N=\textup{Ker} \rho$ and $G' = \textup{Im} \rho$.
%%% If $C$ passes through $P$, then $G$ is cyclic by virtue of Proposition \ref{prop:cyclic}. 

\begin{claim} \label{claim:A}
The subgroup $N$ is a cyclic group whose order is a factor of $d$.
\end{claim}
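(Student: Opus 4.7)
My plan is to compute $N = \ker \rho$ explicitly and then invoke Lemma \ref{lem:cyclic}.

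First I would observe that an element $[A] \in \mathrm{PBD}(2,1)$ lies in $N$ precisely when $\rho([A]) = [A']$ is the identity in $\mathrm{PGL}(2,\mathbb{C})$, i.e.\ when $A'$ is a scalar matrix. After rescaling, every element of $N$ is therefore represented by a diagonal matrix of the form $\mathrm{diag}(1,1,\alpha)$ for some $\alpha \in \mathbb{C}^*$; equivalently, each non-trivial element of $N$ is the transformation $[X,Y,\alpha Z]$. These are precisely the homologies with axis $L = L_3 \colon Z = 0$ and center $P = P_3 = (0:0:1)$, together with the identity.

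Next I would show $N$ is cyclic. The map $N \to \mathbb{C}^*$ sending $[X,Y,\alpha Z] \mapsto \alpha$ is a well-defined injective group homomorphism, so $N$ embeds as a finite subgroup of $\mathbb{C}^*$. Every finite subgroup of $\mathbb{C}^*$ is cyclic (it consists of roots of unity of some common order), so $N$ is cyclic.

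Finally, I would verify divisibility. A generator $\sigma$ of $N$ is a homology whose center is $P_3$, and by hypothesis in Case (A) we have $P = P_3 \notin C$. Lemma \ref{lem:cyclic} then applies directly to the cyclic group $\langle \sigma \rangle = N$: since $\sigma$ is a homology with center not on $C$, the order $|N|$ is a factor of $d$.

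The argument is essentially bookkeeping plus a citation of Lemma \ref{lem:cyclic}, so I do not anticipate a real obstacle; the only point requiring a moment's care is that the homomorphism $N \hookrightarrow \mathbb{C}^*$ is well defined up to the scalar ambiguity in choosing a matrix representative, which is resolved by normalizing the upper $2 \times 2$ block to the identity.
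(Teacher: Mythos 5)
Your proposal is correct and follows essentially the same route as the paper: normalize representatives to $\mathrm{diag}(1,1,\alpha)$, embed $N$ into $\mathbb{C}^*$ to get cyclicity, note the generator is a homology with center $P=(0:0:1)\notin C$, and apply Lemma \ref{lem:cyclic} to get $|N|\mid d$. No gaps.
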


\begin{proof}
For each element $\eta$ of $N$, there exists a unique diagonal matrix of the form $\textup{diag} (1,1,\zeta)$ that represents $\eta$.
Then the homomorphism $\varphi: N \to \mathbb{C}^*$ $(\eta \mapsto \zeta)$ is injective.
Hence $N$ is isomorphic to a finite subgroup of $\mathbb{C}^*$, which implies that $N$ is a cyclic group generated by a homology.
Our assertion on the order of $N$ follows from Lemma \ref{lem:cyclic}.
\end{proof}

Let $\eta=[X,Y,\zeta Z]$ be a generator of $N$, where $\zeta$ is a root of unity.
On the other hand, it is well known that $G'$, a finite subgroup of $\textup{PGL}(2,\mathbb{C})$, is isomorphic to $\mathbb{Z}_m$, $D_{2m}$, $A_4$, $S_4$ or $A_5$.
%%% If $C$ passes through $P$, then $G'$ is cyclic since $G$ is cyclic.

In what follows we assume that $G' \simeq \mathbb{Z}_m$ or $D_{2m}$
and give upper bounds for $m$.
There exists an element $\sigma$ such that $\rho(\sigma) = \sigma'$ is of order $m$.
Let $H=\langle \sigma \rangle$ be the cyclic subgroup of $G$ generated by $\sigma$. 
We see that $\sigma = [\alpha X ,\beta Y,Z]$, where $\alpha$ and $\beta$ are roots of unity such that $\alpha/\beta$ is a primitive $m$-th root of unity.
Then the fixed points of $\sigma$ on $L$ are $P_1=(1:0:0)$ and $P_2=(0:1:0)$.
If $G' \simeq \mathbb{Z}_m$, then $G$ is generated by $\eta$ and $\sigma$,
which implies that $G$ fixes two points $P_1$ and $P_2$.

Additionally, when $G' \simeq D_{2m}$, there exists an element $\tau$ such that $\sigma'$ and $\tau' = \rho(\tau)$ generate $G'$ with $\tau'^2=1$ and $\tau' \sigma' \tau' = \sigma'^{-1}$.
Then $G$ is generated by $\eta$, $\sigma$ and $\tau$.
In this case we may also assume that $\tau = [\gamma Y, \gamma X, Z]$ for some $\gamma$, a root of unity.

Let $F$ be a defining homogeneous polynomial of $C$ and $e_k$ the intersection multiplicity $i_{P_k} (C,L)$ of $C$ and $L$ at $P_k$ $(k=1,2)$.
Note that $e_1=e_2$ if $G'$ is a dihedral group. 

For the triviality of $N$, we have the following:

\begin{claim} \label{claim:B}
If $e_1 \ge 2$ or $e_2 \ge 2$, then $N$ is trivial. 
\end{claim}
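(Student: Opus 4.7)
The plan is to translate the tangency hypothesis $e_1 \ge 2$ into a vanishing/non-vanishing statement on the coefficients of a defining polynomial $F$ of $C$, and then read off directly that the generator of $N$ must be the identity.

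Writing $F = \sum_{i+j+k=d} c_{ijk} X^i Y^j Z^k$, I would first argue that $e_1 \ge 2$ forces $P_1 \in C$ (otherwise the intersection multiplicity at $P_1$ would be $0$), so $c_{d,0,0} = F(P_1) = 0$. Since $C$ is smooth at $P_1$, the projective tangent line at $P_1$ is the zero locus of $c_{d-1,1,0}\,Y + c_{d-1,0,1}\,Z$, and the hypothesis $e_1 \ge 2$ says precisely that this tangent coincides with $L : Z = 0$. Matching equations gives $c_{d-1,1,0} = 0$ and, crucially, $c_{d-1,0,1} \ne 0$.

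Next, recall from immediately before the claim that $N = \langle \eta \rangle$ with $\eta = [X, Y, \zeta Z]$. Because $\eta \in \textup{Aut}(C)$, we have $\eta^* F = \sum c_{ijk}\,\zeta^k X^i Y^j Z^k = \lambda F$ for some constant $\lambda$, so $\zeta^k = \lambda$ for every $(i,j,k)$ with $c_{ijk} \ne 0$. Since $L$ has degree $1$ while $C$ is irreducible of degree $d \ge 4$, $L$ is not a component of $C$, so $F|_{Z=0} \not\equiv 0$; hence some coefficient with $k = 0$ is nonzero, forcing $\lambda = 1$. Combined with $c_{d-1,0,1} \ne 0$, this yields $\zeta = 1$, so $\eta$ is the identity and $N$ is trivial. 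The case $e_2 \ge 2$ is identical after swapping the roles of $X$ and $Y$ (equivalently, of $P_1$ and $P_2$).

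I do not anticipate a genuine obstacle here: the argument is essentially a direct computation once the tangency condition is decoded into coefficient vanishing. The only point worth flagging is the non-vanishing of $F|_{Z=0}$, which is immediate from the irreducibility of $C$ together with $\deg L < \deg C$.
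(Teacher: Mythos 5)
Your proof is correct and follows essentially the same route as the paper: both arguments use smoothness at $P_1$ together with $e_1\ge 2$ to produce the nonzero coefficient of $X^{d-1}Z$, use the nonvanishing of $F|_{Z=0}$ (the $k=0$ terms) to pin the scalar $\lambda$ to $1$, and conclude $\zeta=1$. Your write-up merely makes explicit the coefficient bookkeeping that the paper compresses into the expression $F = X^{e_2}Y^{e_1}F_1(X,Y) + cX^{d-1}Z + \textup{(other terms)}$.
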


\begin{proof}
Without loss of generality, we may assume that $e_1 \ge 2$. 
Then, since $C$ is smooth at $P_1=(1:0:0)$, its defining polynomial $F$ contains a term of the form $c X^{d-1}Z$ $(c \ne 0)$.
Then $F$ is written as
\[ F = X^{e_2} Y^{e_1} F_1(X,Y) + c X^{d-1}Z + \textup{(other terms)}, \]
where $F_1$ is a homogeneous polynomial of $X$, $Y$ such that neither $X$ nor $Y$ is its factor.
Therefore 
\[ F^{\eta} = X^{e_2} Y^{e_1} F_1(X,Y) + \zeta^{-1} c X^{d-1}Z + \textup{(other terms)}, \]
which implies that $\zeta =1$, since $F^{\eta}$ is equal to $F$ up to a constant.
That is to say, $N$ is trivial.
\end{proof}

Next we distinguish two subcases:

\begin{itemize}
\item[\textup{(A-1)}] $C \cap L$ contains a point distinct from $P_1$ and $P_2$.
\item[\textup{(A-2)}] $C \cap L \subset \{ P_1, P_2 \}$.
\end{itemize}

We give a simple remark on these assumptions.

\begin{rem} \label{rem:assumption}
\normalfont
If $G' \simeq \mathbb{Z}_m$, we may assume that the former one is the case. 
Indeed, suppose that $C \cap L \subset \{ P_1, P_2 \}$.
Then $G$ fixes each of $P_1$ and $P_2$ and at least one of them are lying on $C$,
that is to say, $G$ fixes a point on $C$.
Hence (a-i) in Theorem \ref{thm:main1} follows from the argument in the beginning of this case.
\end{rem}

%%%%%%%%%%%%%%%%%%%%%%%%%%%%%%%%%%%%%%%%%%%%%%%%%%%%%%%%%%%%%%%%%%%%%%%%%%%%%%%%%%%%%%%%%%%%%%%%%%%%%%%%%%%%%%%%%%%%%%%%%%
%%%   Subcase(A-1)   %%%%%%%%%%%%%%%%%%%%%%%%%%%%%%%%%%%%%%%%%%%%%%%%%%%%%%%%%%%%%%%%%%%%%%%%%%%%%%%%%%%%%%%%%%%%%%%%%%%%%
%%%%%%%%%%%%%%%%%%%%%%%%%%%%%%%%%%%%%%%%%%%%%%%%%%%%%%%%%%%%%%%%%%%%%%%%%%%%%%%%%%%%%%%%%%%%%%%%%%%%%%%%%%%%%%%%%%%%%%%%%%
\noindent
\textbf{Subcase\,(A-1):} $C \cap L$ contains a point $Q$ distinct from $P_1$ and $P_2$.

We show the following claim:

\begin{claim} \label{claim:C}
The order $m$ of $\sigma'$ divides $d-e_1-e_2$.
Furthermore, if $m=d$ then $(C,G)$ is a descendant of Fermat curve $F_d$.
\end{claim}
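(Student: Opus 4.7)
The plan is first to use the action of $\sigma$ on the intersection divisor $C \cap L$ to establish the divisibility statement, and then to analyze the core of a defining polynomial of $C$ in the case $m = d$.

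For the first part, I would note that the restriction of $\sigma = [\alpha X, \beta Y, Z]$ to $L \cong \mathbb{P}^1$ has exactly two fixed points, namely $P_1$ and $P_2$, because $\alpha/\beta$ is a primitive $m$-th root of unity. Every $\sigma$-orbit on $L \setminus \{ P_1, P_2 \}$ therefore has cardinality $m$. Since $\sigma \in \textup{Aut}(C)$ preserves intersection multiplicities with $L$, points in a common $\sigma$-orbit are assigned the same multiplicity in the divisor $C \cap L$. Summing orbit by orbit over the points of $C \cap L$ distinct from $P_1$ and $P_2$, whose total multiplicity is $d - e_1 - e_2$, one immediately obtains $m \mid d - e_1 - e_2$.

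Now suppose $m = d$. Then $d \mid e_1 + e_2$ while $0 \le e_1 + e_2 \le d$, and the Subcase\,(A-1) hypothesis that $C \cap L$ contains a point $Q$ distinct from $P_1$ and $P_2$ forces $e_1 + e_2 < d$. Hence $e_1 = e_2 = 0$, so $P_1, P_2 \notin C$; combined with the standing assumption $P = P_3 \notin C$, this shows that the monomials $X^d$, $Y^d$, $Z^d$ all appear with nonzero coefficient in any defining polynomial $F$ of $C$. These are precisely the monomials of maximum exponent $d$, so the core of $F$ takes the form $c X^d + c' Y^d + c'' Z^d$ with $c c' c'' \ne 0$. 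A suitable diagonal rescaling of the coordinates therefore turns the core into $X^d + Y^d + Z^d$, a defining polynomial of Fermat curve $F_d$.

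It remains to check that $G$ acts on $F_d$ in the new coordinate system. The rescaling conjugates $G$ by a diagonal matrix and so keeps $G$ inside $\textup{PBD}(2,1)$. The generators $\eta$, $\sigma$ and (in the dihedral case) $\tau = [\gamma Y, \gamma X, Z]$ of $G$ all preserve the exponent $\max\{ i, j, k \}$ of each monomial $X^i Y^j Z^k$, hence send the core of $F$ to a scalar multiple of itself. After rescaling, this core is $X^d + Y^d + Z^d$, so $G$ stabilizes $F_d$ and $(C, G)$ is a descendant of Fermat curve. The main obstacle is essentially psychological: one has to notice the two key invariances, namely that intersection multiplicities on $L$ are $\sigma$-equivariant (which drives the orbit count) and that the monomial-exponent function is $G$-invariant (which forces $G$ to preserve the core); once these observations are made, both parts of the claim reduce to short counting or coordinate arguments.
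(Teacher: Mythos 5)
Your proof is correct and follows essentially the same route as the paper: for the divisibility you count $\langle\sigma\rangle$-orbits on $L\setminus\{P_1,P_2\}$ (each of size $m$, since a power of $\sigma$ fixing a third point of $L$ must restrict to the identity on $L$) and invoke B\'ezout, and for $m=d$ you observe that $e_1=e_2=0$ forces the core of $F$ to be $X^d+Y^d+Z^d$ after rescaling, which the monomial generators $\eta$, $\sigma$, $\tau$ of $G$ preserve up to a constant. If anything, you are slightly more careful than the paper in using the Subcase (A-1) hypothesis to exclude $e_1+e_2=d$.
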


\begin{proof}
Suppose that $\sigma^j$ fixes $Q$ for some $j$.
Then it fixes three points on $L$, namely, $Q$, $P_1$ and $P_2$.
Hence it fixes $L$ pointwise, that is to say, $\sigma^j \in N$.
In other words, $\sigma'^j=1$, which shows that $m|j$.
On the other hand, it is obvious that $\sigma^m$ fixes $Q$.
It follows that the order of the orbit of $Q$ by $H$ is equal to $|H/\langle \sigma^m \rangle| = m$. 
Therefore we conclude that $m | \, d-e_1-e_2$ using B\'{e}zout's theorem.

Assume that $m=d$.
Then $e_1=e_2=0$, which implies that neither $P_1$ nor $P_2$ lies on $C$.
It follows that $C$ is defined by a polynomial whose core is $X^d+Y^d+Z^d$ under a suitable coordinate system.
Recall that $G$ is generated by $\eta$ and $\sigma$ (resp.~$\eta$, $\sigma$ and $\tau$) when $G' \simeq \mathbb{Z}_m$ (resp.~$G' \simeq D_{2m}$).
Hence every element of $G$ fixes the polynomial $X^d+Y^d+Z^d$ up to a constant, in other words, $G$ is a subgroup of $\textup{Aut}(F_d)$.
Thus $(C,G)$ is a descendant of Fermat curve $F_d$.
\end{proof}

We obtain the assertion of Theorem \ref{thm:main1} by using Claim \ref{claim:A}, Claim \ref{claim:B} and Claim \ref{claim:C} as follows.

%%% If $P \in C$, then $G$ is cyclic by virtue of Proposition \ref{prop:cyclic}.
%%% Furthermore, $N$ and $G'$ are cyclic groups of order at most $d-1$ and $d$ respectively.
%%% Hence the order of $G$ is at most $d(d-1)$.
%%% Thus (a-i) in Theorem \ref{thm:main1} holds.

First $N$ is a cyclic group whose order is a factor of $d$ by Claim \ref{claim:A}.
Furthermore, when $G' \simeq \mathbb{Z}_m$, the inequality $m \le d-1$ holds 
or $(C,G)$ is a descendant of Fermat curve $F_d$ by Claim \ref{claim:C}.
Hence (a-ii) or (b-i) in Theorem \ref{thm:main1} holds.
On the other hand, when $G' \simeq D_{2m}$, note that $e_1=e_2$.
Therefore combining Claim \ref{claim:B} with Claim \ref{claim:C} we come to the following conclusion.
\begin{itemize}
\item[\textup{(i)}] $m | \, d-2$ if $e_1=e_2=1$.
\item[\textup{(ii)}] $m \le d-4$ and $N$ is trivial if $e_1=e_2 \ge 2$.
\item[\textup{(iii)}] $(C,G)$ is a descendant of Fermat curve $F_d$ if $e_1=e_2=0$.
\end{itemize}
That is to say, (a-ii) or (b-i) in Theorem \ref{thm:main1} follows in this case also.
Thus we complete the proof in this subcase.

\vspace{1em}

%%%%%%%%%%%%%%%%%%%%%%%%%%%%%%%%%%%%%%%%%%%%%%%%%%%%%%%%%%%%%%%%%%%%%%%%%%%%%%%%%%%%%%%%%%%%%%%%%%%%%%%%%%%%%%%%%%%%%%%%%%
%%%   Subcase(A-2)   %%%%%%%%%%%%%%%%%%%%%%%%%%%%%%%%%%%%%%%%%%%%%%%%%%%%%%%%%%%%%%%%%%%%%%%%%%%%%%%%%%%%%%%%%%%%%%%%%%%%%
%%%%%%%%%%%%%%%%%%%%%%%%%%%%%%%%%%%%%%%%%%%%%%%%%%%%%%%%%%%%%%%%%%%%%%%%%%%%%%%%%%%%%%%%%%%%%%%%%%%%%%%%%%%%%%%%%%%%%%%%%%

\noindent
\textbf{Subcase\,(A-2):} $C \cap L \subset \{ P_1, P_2 \}$, or equivalently, $e_1+e_2=d$.

As we noted in Remark \ref{rem:assumption}, we may assume that $G' \simeq D_{2m}$.
Furthermore, it follows from our assumption that $e_1=e_2=d/2 \ge 2$,
which implies that $N$ is trivial by virtue of Claim \ref{claim:B}.
%%% and $F$, the defining polynomialof $C$, is written as 
%%% $F = X^{d/2} Y^{d/2} + Z \hat F$, where $\hat F$ is a homogeneous polynomial of degree $d-1$.

It remains to prove that $m \le d-1$.
In fact we can show the following claim.

\begin{claim} \label{claim:D}
$m| \, d-1$.
\end{claim}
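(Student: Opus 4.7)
The plan is to pin down the shape of a defining polynomial $F$ of $C$ forced by the geometric hypotheses, and then read off the divisibility from the invariance of $F$ under $\sigma$.

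First I would determine $F(X,Y,0)$. Since $C\cap L\subset\{P_1,P_2\}$ and $i_{P_k}(C,L)=e_k=d/2$ for $k=1,2$, the restriction $F(X,Y,0)$ is a homogeneous degree-$d$ form in $X,Y$ vanishing to order $d/2$ at both $(1:0)$ and $(0:1)$; hence it is a scalar multiple of $X^{d/2}Y^{d/2}$. Consequently
\[ F(X,Y,Z)=aX^{d/2}Y^{d/2}+Z\cdot G(X,Y,Z)\]
for some $a\neq0$ and some homogeneous $G$ of degree $d-1$. Next, because $d/2\ge2$, the tangent to $C$ at $P_1$ must equal $L$; smoothness at $P_1$ then forces $\partial F/\partial Z|_{P_1}\neq0$, i.e. the coefficient of $X^{d-1}Z$ in $F$ is non-zero. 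The same argument at $P_2$ shows that the coefficient of $Y^{d-1}Z$ is non-zero. Writing these coefficients as $b$ and $c$, we obtain
\[ F=aX^{d/2}Y^{d/2}+bX^{d-1}Z+cY^{d-1}Z+\textup{(other terms)},\qquad a,b,c\neq0.\]

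Next I would exploit the invariance of $F$ under $\sigma=[\alpha X,\beta Y,Z]$. Since $\sigma$ preserves $C$, there exists $\lambda\in\mathbb{C}^*$ with $F^\sigma=\lambda F$. Comparing coefficients of the three distinguished monomials above yields
\[ \alpha^{d/2}\beta^{d/2}=\lambda,\qquad \alpha^{d-1}=\lambda,\qquad \beta^{d-1}=\lambda.\]
The last two equations give $\alpha^{d-1}=\beta^{d-1}$, hence $(\alpha/\beta)^{d-1}=1$. Since $\alpha/\beta$ is a primitive $m$-th root of unity by construction, this forces $m\mid d-1$, as desired.

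I do not expect any serious obstacle: the only subtle point is that one must justify that $bX^{d-1}Z$ and $cY^{d-1}Z$ really appear with non-zero coefficients, which is exactly what smoothness at $P_1$ and $P_2$ combined with the fact that the tangent there is $L$ (forced by $e_k=d/2\ge2$) gives. Once $F$ is pinned down in that form, the divisibility $m\mid d-1$ drops out of a one-line comparison of two scalars.
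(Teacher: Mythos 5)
Your proof is correct and is essentially the paper's own argument: both extract the monomials $X^{d-1}Z$ and $Y^{d-1}Z$ with non-zero coefficients from smoothness of $C$ at $P_1$, $P_2$ (together with the tangency forced by $e_1=e_2=d/2\ge 2$), and then compare the scalars by which $\sigma=[\alpha X,\beta Y,Z]$ multiplies them to get $(\alpha/\beta)^{d-1}=1$, i.e.\ $m\mid d-1$. Your preliminary determination of $F(X,Y,0)=aX^{d/2}Y^{d/2}$ is a harmless extra, and your justification of the non-vanishing of the two coefficients is in fact slightly more explicit than the paper's.
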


\begin{proof}
Since $C$ passes through $P_1=(1:0:0)$ and $P_2=(0:1:0)$ and $C$ is smooth, 
$F$, the defining polynomial of $C$, contains two terms of the form $c X^{d-1}Z$ and $c' Y^{d-1}Z$ $(c,c' \ne 0)$.
We then have the following equalities:
\begin{align*}
 F &= c X^{d-1}Z + c' Y^{d-1}Z + \textup{(other terms)}, \\
 F^{\sigma} &= \alpha^{-(d-1)} c X^{d-1}Z + \beta^{-(d-1)} c' Y^{d-1}Z + \textup{(other terms)}.
%%% F &= X^{d/2} Y^{d/2} + c X^{d-1}Z + c' Y^{d-1}Z + \textup{(other terms)}, \\
%%% F^{\sigma} &= \alpha^{-d/2} \beta^{-d/2} X^{d/2} Y^{d/2} + \alpha^{-(d-1)} c X^{d-1}Z + \beta^{-(d-1)} c' Y^{d-1}Z + \textup{(other terms)}.
\end{align*}
Since $\sigma$ preserves $F$ up to a constant, we obtain the equality $\alpha^{-(d-1)} = \beta^{-(d-1)}$.
In other words, $\sigma'^{d-1}=1$, which implies that $m| \,d-1$.
\end{proof}

Thus the assertion (a-ii) in Theorem \ref{thm:main1} holds in this subcase,
which completes our proof in Case\,(A).

\section{Automorphism groups of smooth plane curves: Case\,(B)} \label{Sec:Case(B)}
In this section we show the statement\,(b-i) or (b-ii) in Theorem \ref{thm:main1} holds in Case\,(B).  

\vspace{1em}

%%%%%%%%%%%%%%%%%%%%%%%%%%%%%%%%%%%%%%%%%%%%%%%%%%%%%%%%%%%%%%%%%%%%%%%%%%%%%%%%%%%%%%%%%%%%%%%%%%%%%%%%%%%%%%%%%%%%%%%%%%
%%%   Case (B)   %%%%%%%%%%%%%%%%%%%%%%%%%%%%%%%%%%%%%%%%%%%%%%%%%%%%%%%%%%%%%%%%%%%%%%%%%%%%%%%%%%%%%%%%%%%%%%%%%%%%%%%%%%
%%%%%%%%%%%%%%%%%%%%%%%%%%%%%%%%%%%%%%%%%%%%%%%%%%%%%%%%%%%%%%%%%%%%%%%%%%%%%%%%%%%%%%%%%%%%%%%%%%%%%%%%%%%%%%%%%%%%%%%%%%
\noindent
\textbf{Case\,(B):} $G$ fixes a triangle $\Delta$ and there exists neither a line nor a point fixed by $G$.

We may assume that $\Delta$ consists of three lines $L_1:X=0$, $L_2:Y=0$ and $L_3:Z=0$.
Let $V$ be the set of vertices of $\Delta$, i.e., $V= \{P_1, P_2, P_3 \}$.
Then $G$ acts on $V$ transitively because otherwise $G$ fixes a line or a point, which conflicts with our assumption.
It follows that either $C$ and $V$ are disjoint or $C$ contains $V$.

Let $F$ be a defining homogeneous polynomial of $C$.
We note a trivial but useful observation:\\

\noindent
\textbf{Observation.}
\textit{Each element of $G$ gives a permutation of the set $\{ X,Y,Z \}$ of the coordinate functions up to constants. }\\
%%% In particular $G$ fixes the core of $F$ up to a constant.

If $C$ contains $V$, we denote by $T_i$ the tangent line to $C$ at $P_i$ $(i=1,2,3)$.
Note that these lines are distinct and not concurrent by our assumption.
Furthermore, $G$ fixes the set $\{ T_1, T_2, T_3 \}$ and acts on it transitively.
Thus Case\,(B) is divided into three subcases:
\begin{itemize}
\item[(B-1)] $C$ and $V$ are disjoint.
\item[(B-2)] $C$ contains $V$ and each of  $T_i$'s $(i=1,2,3)$ is an edge of $\Delta$.
\item[(B-3)] $C$ contains $V$ and none of $T_i$'s $(i=1,2,3)$ is an edge of $\Delta$.
\end{itemize}

\vspace{1em}

%%%%%%%%%%%%%%%%%%%%%%%%%%%%%%%%%%%%%%%%%%%%%%%%%%%%%%%%%%%%%%%%%%%%%%%%%%%%%%%%%%%%%%%%%%%%%%%%%%%%%%%%%%%%%%%%%%%%%%%%%%
%%%   Subcase (B-1)   %%%%%%%%%%%%%%%%%%%%%%%%%%%%%%%%%%%%%%%%%%%%%%%%%%%%%%%%%%%%%%%%%%%%%%%%%%%%%%%%%%%%%%%%%%%%%%%%%%%%%
%%%%%%%%%%%%%%%%%%%%%%%%%%%%%%%%%%%%%%%%%%%%%%%%%%%%%%%%%%%%%%%%%%%%%%%%%%%%%%%%%%%%%%%%%%%%%%%%%%%%%%%%%%%%%%%%%%%%%%%%%%
\noindent
\textbf{Subcase\,(B-1):} $C$ and $V$ are disjoint.

We show that $(C,G)$ is a descendant of Fermat curve $F_d: X^d+Y^d+Z^d=0$ in this subcase.
By our assumption the defining polynomial $F$ of $C$ is of the form
\[ F = a X^d+ b Y^d+ c Z^d + \textup{(low terms)} \quad (a,b,c \ne 0). \]
Furthermore, we may assume that $a=b=c=1$ after a suitable coordinate change if necessary.
Then the core of $F$ is $X^d+Y^d+Z^d$, which is fixed by each element of $G$ up to a constant from the above observation.
It follows that $G$ also acts on Fermat curve $F_d$, in other words, $G$ is a subgroup of $\textup{Aut}(F_d)$.
Thus we conclude that $(C,G)$ is a descendant of $F_d$.

\vspace{1em}

%%%%%%%%%%%%%%%%%%%%%%%%%%%%%%%%%%%%%%%%%%%%%%%%%%%%%%%%%%%%%%%%%%%%%%%%%%%%%%%%%%%%%%%%%%%%%%%%%%%%%%%%%%%%%%%%%%%%%%%%%%
%%%   Subcase(B-2)   %%%%%%%%%%%%%%%%%%%%%%%%%%%%%%%%%%%%%%%%%%%%%%%%%%%%%%%%%%%%%%%%%%%%%%%%%%%%%%%%%%%%%%%%%%%%%%%%%%%%%
%%%%%%%%%%%%%%%%%%%%%%%%%%%%%%%%%%%%%%%%%%%%%%%%%%%%%%%%%%%%%%%%%%%%%%%%%%%%%%%%%%%%%%%%%%%%%%%%%%%%%%%%%%%%%%%%%%%%%%%%%%
\noindent
\textbf{Subcase\,(B-2):} $C$ contains $V$ and each $T_i$ $(i=1,2,3)$ is an edge of $\Delta$.

We show that $(C,G)$ is a descendant of Klein curve $K_d:XY^{d-1}+YZ^{d-1}+ZX^{d-1}=0$ in this subcase.
Without loss of generality we may assume that $T_1=L_3$, $T_2=L_1$ and $T_3=L_2$.
Then the defining polynomial $F$ of $C$ is of the form
\[ F = aXY^{d-1}+bYZ^{d-1}+cZX^{d-1} + \textup{(low terms)} \quad (a,b,c \ne 0). \]
Again we may assume that $a=b=c=1$ after a suitable coordinate change if necessary.
Then the core of $F$ is $XY^{d-1}+YZ^{d-1}+ZX^{d-1}$, which is fixed by each element of $G$ up to a constant from the above observation.
Hence $G$ also acts on Klein curve $K_d$, that is to say, $G$ is a subgroup of $\textup{Aut}(K_d)$.
Thus $(C,G)$ is a descendant of $K_d$.

\vspace{1em}

%%%%%%%%%%%%%%%%%%%%%%%%%%%%%%%%%%%%%%%%%%%%%%%%%%%%%%%%%%%%%%%%%%%%%%%%%%%%%%%%%%%%%%%%%%%%%%%%%%%%%%%%%%%%%%%%%%%%%%%%%%
%%%   Subcase (B-3)   %%%%%%%%%%%%%%%%%%%%%%%%%%%%%%%%%%%%%%%%%%%%%%%%%%%%%%%%%%%%%%%%%%%%%%%%%%%%%%%%%%%%%%%%%%%%%%%%%%%%%
%%%%%%%%%%%%%%%%%%%%%%%%%%%%%%%%%%%%%%%%%%%%%%%%%%%%%%%%%%%%%%%%%%%%%%%%%%%%%%%%%%%%%%%%%%%%%%%%%%%%%%%%%%%%%%%%%%%%%%%%%%
\noindent
\textbf{Subcase\,(B-3):} $C$ contains $V$ and no $T_i$ $(i=1,2,3)$ is an edge of $\Delta$.

We show that this subcase does not actually occur.

Let $V'= \{P'_1, P'_2, P'_3 \}$ be the set of the intersection points of $T_1$, $T_2$ and $T_3$, 
where $P'_i$ is the intersection point of $T_j$ and $T_k$ with $\{ i,j,k \} = \{ 1,2,3 \}$.
They are pairwise distinct because otherwise $T_1$, $T_2$ and $T_3$ are concurrent and the intersection point of them is fixed by $G$, which conflicts with our assumption.
Thus $T_1$, $T_2$ and $T_3$ constitute a triangle $\Delta'$, which is fixed by $G$ and $V'$ is the set of its vertices.
Furthermore, $V$ and $V'$ are disjoint by our assumption.

Any element $\sigma \in G$ can be written in the form $\sigma = [\alpha X_i, \beta X_j, \gamma X_k]$
with some constants $\alpha$, $\beta$ and $\gamma$, where $\{ i,j,k \} = \{ 1,2,3 \}$, $X_1=X$, $X_2=Y$ and $X_3=Z$.
Hence we have the natural homomorphism $\rho: G \to S_3$ defined by 
\[
\rho ( \sigma ) =  
\Big( \begin{array}{ccc}
1 & 2 & 3 \\
i & j & k \\
\end{array}
\Big). 
\]
Then $\textup{Im} \rho$ is isomorphic to $\mathbb{Z}_3$ or $S_3$, since there exists neither a line nor a point fixed by $G$.
We show that $\textup{Ker} \rho$ is trivial by using the following observation:

\begin{lemma} \label{lem:triangle}
Let $\sigma$ be a non-trivial planar projective transformation of finite order.
\begin{itemize}
\item[(i)] If $\sigma$ is a homology, then its fixed points consist of its center and all points on its axis.
In particular, every triangle whose set of vertices is pointwise fixed by $\sigma$ contains its center as a vertex.
\item[(ii)] If $\sigma$ is not a homology, then it fixes exactly three points.
In particular, there exists a unique triangle whose set of vertices is pointwise fixed by $\sigma$.
\end{itemize}
\end{lemma}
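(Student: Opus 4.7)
The plan is to exploit that $\sigma$ has finite order, so any lift $A \in \textup{GL}(3,\mathbb{C})$ of $\sigma$ may be chosen of finite order and is therefore diagonalizable. After a change of coordinates I may assume $A = \textup{diag}(\alpha,\beta,\gamma)$ with $\alpha,\beta,\gamma$ roots of unity. Since fixed points of $\sigma$ in $\mathbb{P}^2$ correspond to eigenlines of $A$, the whole analysis reduces to a case split on the multiplicity pattern of the eigenvalues.

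If $\alpha=\beta=\gamma$ then $\sigma$ is trivial, contradicting our assumption. If exactly two of the eigenvalues coincide, say $\alpha=\beta\neq\gamma$, then after rescaling $A=\textup{diag}(1,1,\zeta)$ with $\zeta$ a nontrivial root of unity, so $\sigma$ is by definition a homology with axis $L_3:Z=0$ and center $P_3=(0:0:1)$; its fixed-point set is precisely $L_3\cup\{P_3\}$. If the three eigenvalues are pairwise distinct, then the only eigenlines are the three coordinate axes, so $\sigma$ fixes exactly the three points $P_1,P_2,P_3$, and in particular $\sigma$ is not a homology. This dichotomy proves the first sentences of (i) and (ii).

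For the triangle statement in (i), note that the three vertices of a triangle are non-collinear by definition (the edges are non-concurrent). If all three vertices lie in the fixed locus $L_3\cup\{P_3\}$ of a homology, they cannot all lie on the axis $L_3$, so at least one vertex must equal the center $P_3$. For (ii), the three fixed points $P_1,P_2,P_3$ are non-collinear, and any triangle whose vertex set is contained in (hence equals) $\{P_1,P_2,P_3\}$ must have as its edges the three lines through the pairs of these points; this triangle is therefore unique.

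The only mildly delicate point is verifying that a finite-order element of $\textup{PGL}(3,\mathbb{C})$ admits a diagonalizable lift, which I would justify by observing that if $\sigma^n = \textup{id}$ in $\textup{PGL}$ then any lift $A$ satisfies $A^n = \lambda I$ for some $\lambda \in \mathbb{C}^*$, and after rescaling $A$ by an $n$-th root of $\lambda$ the new lift has finite order and is thus diagonalizable. Beyond this, the argument is a clean eigenvalue dichotomy and I anticipate no further obstacles.
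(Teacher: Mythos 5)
Your proof is correct. The paper states this lemma as a bare ``observation'' and supplies no proof at all, so there is nothing to compare against; your argument --- pass to a finite-order (hence diagonalizable) lift in $\textup{GL}(3,\mathbb{C})$ and split on the multiplicity pattern of the eigenvalues --- is exactly the standard justification, and both the fixed-locus descriptions and the two triangle conclusions follow as you say. The only nitpick is cosmetic: to make the lift satisfy $(cA)^n = I$ you rescale by an $n$-th root of $\lambda^{-1}$, not of $\lambda$.
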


Let $\sigma$ be any element of $\textup{Ker} \rho$.
Then it can be written in the form $[\alpha X, \beta Y, Z]$ $(\alpha, \beta \ne 0)$.
Hence it fixes $V$ pointwise, which implies that it fixes each $T_i$ $(i=1,2,3)$.
In particular it fixes $V'$ also pointwise.
Then it follows from the above lemma that $\sigma$ is trivial.
Thus $\textup{Ker} \rho$ is trivial, or equivalently, $G \simeq \textup{Im} \rho \simeq \mathbb{Z}_3$ or $S_3$.

If $G$ is isomorphic to $\mathbb{Z}_3$, then $G$ fixes a line, which contradicts our assumption.
Thus $G$ is isomorphic to $S_3$.
Hence $G$ is generated by $\eta = [Y, Z, X]$ and another element $\tau$ of order two with $\tau \eta \tau = \eta^{-1}$ 
after a suitable coordinate change if necessary. 
Then we may assume that $\tau = [\omega Y, \omega^{-1}X, Z]$ $(\omega^3 =1)$.
Both $\eta$ and $\tau$ fixes the same point $(1:\omega^2:\omega)$.
Therefore $G$ also fixes this point, which conflicts with our assumption again.
It follows that this subcase is excluded.

Thus we complete the proof of Theorem \ref{thm:main1} thoroughly.

%%%%%%%%%%%%%%%%%%%%%%%%%%%%%%%%%%%%%%%%%%%%%%%%%%%%%%%%%%%%%%%%%%%%%%%%%%%%%%%%%%%%%%%%%%%%%%%%%%%%%%%%%%%%%%%%%%%%%%%%%%%%%%%%%%%%%%%%%%%%%%%%%%%%%%%%%%%%%%%%
%%%%%%%%%%%%%%%%%%%% Section 4 終 %%%%%%%%%%%%%%%%%%%%%%%%%%%%%%%%%%%%%%%%%%%%%%%%%%%%%%%%%%%%%%%%%%%%%%%%%%%%%%%%%%%%%%%%%%%%%%%%%%%%%%%%%%%%%%%%%%%%%%%%%%%%%%%
%%%%%%%%%%%%%%%%%%%%%%%%%%%%%%%%%%%%%%%%%%%%%%%%%%%%%%%%%%%%%%%%%%%%%%%%%%%%%%%%%%%%%%%%%%%%%%%%%%%%%%%%%%%%%%%%%%%%%%%%%%%%%%%%%%%%%%%%%%%%%%%%%%%%%%%%%%%%%%%%

%%%%%%■■■■■■■■■■■■■■■■■■■■■■■■■■■■■■■■■■■■■■■■■■■■■■■■■■■■■■■■■■■■
%%%%%%■■■■■■■■■■■■■■■ Section 5 始 ■■■■■■■■■■■■■■■■■■■■■■■■■■■■■■■■■■■■■■
%%%%%%■■■■■■■■■■■■■■■■■■■■■■■■■■■■■■■■■■■■■■■■■■■■■■■■■■■■■■■■■■■■
\section{Smooth plane curves with automorphism groups of large order} \label{Sec:LARGE_GROUP}
In this section we shall prove Theorem \ref{thm:main2} and Theorem \ref{thm:main3}.
First we consider primitive groups acting on smooth plane curves.

\begin{prop} \label{prop:prim}
Let $C$ be a smooth plane curve of degree $d \ge 4$, $G$ a finite subgroup of $\textup{Aut}(C)$.
If $G$ is primitive, then $|G| \le 6d^2$ except the following cases:
\begin{itemize}
\item[\textup{(i)}] $d=4$ and $C$ is projectively equivalent to Klein quartic $XY^3+YZ^3+ZX^3 =0$ and $G \simeq \textup{Aut}(K_4) \simeq \textup{PSL}(2, \mathbb{F}_7)$. 
\item[\textup{(ii)}] $d=6$ and $C$ is projectively equivalent to Wiman sextic $W_6$, which is defined by
\[ 10 X^3 Y^3 + 9 Z X^5 + 9Y^5Z - 45 X^2 Y^2 Z^2 - 135 X Y Z^4 + 27 Z^6 = 0 \]
and $G \simeq \textup{Aut}(W_6) \simeq A_6$.
\end{itemize}
\end{prop}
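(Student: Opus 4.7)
My plan is to reduce the problem to a finite case analysis via Theorem \ref{thm:sub_PGL} and then to pick out the genuine exceptions using the classical invariant theory of primitive subgroups of $\textup{PGL}(3,\mathbb{C})$. By that theorem, a primitive $G$ has order in $\{36, 60, 72, 168, 216, 360\}$. The inequality $|G| \le 6d^2$ is automatic whenever $|G| \le 6 \cdot 4^2 = 96$, which handles the orders $36, 60, 72$ for every $d \ge 4$; it is also automatic whenever $6d^2 \ge 360$, that is, for $d \ge 8$. Comparing $|G|$ with $6d^2$ in the remaining range leaves only the pairs
\begin{itemize}
\item $G \simeq \textup{PSL}(2, \mathbb{F}_7)$ with $d \in \{4, 5\}$,
\item $G \simeq H_{216}$ with $d \in \{4, 5\}$,
\item $G \simeq A_6$ with $d \in \{4, 5, 6, 7\}$,
\end{itemize}
as potential exceptions.

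For each such pair I would ask whether the defining ternary form $F$ of $C$ can arise as a scalar semi-invariant of $G$ in degree $d$, which is decided by the classical ring of semi-invariants in the unique (up to conjugation) primitive three-dimensional representation of the group in question. The relevant inputs are:
\begin{itemize}
\item $\textup{PSL}(2, \mathbb{F}_7)$: Klein's invariants appear in degrees $4, 6, 14, 21$; in degree $4$ the invariant is unique up to scalar and yields the smooth Klein quartic $XY^3+YZ^3+ZX^3 = 0$, while no invariant exists in degree $5$.
\item $A_6$: Wiman's invariants appear in degrees $6, 12, 30, 45$; the degree-$6$ invariant is unique up to scalar and, after a suitable coordinate change, equals the Wiman sextic $W_6$, while no invariants exist in degrees $4, 5$, or $7$.
\item $H_{216}$: its ring of invariants starts in degree $6$ (for instance the sextic of Remark \ref{rem:main2}(2)), so no $H_{216}$-invariant plane curve of degree $4$ or $5$ can exist.
\end{itemize}
These statements can be cited from Klein, Wiman, and Mitchell, or verified directly from the generators listed in Remark \ref{rem:main1}(3) by dimension-counting the space of monomials of the prescribed degree fixed up to scalar by each generator.

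The elimination above rules out every pair except $(\textup{PSL}(2, \mathbb{F}_7), 4)$ and $(A_6, 6)$, which are precisely cases (i) and (ii) of the proposition. Uniqueness of $C$ up to projective equivalence follows because each of $\textup{PSL}(2, \mathbb{F}_7)$ and $A_6$ admits a unique primitive embedding into $\textup{PGL}(3, \mathbb{C})$ up to conjugation, and the distinguished invariant in the relevant degree is unique up to scalar; the identification $G = \textup{Aut}(C)$ is then forced by matching $|G|$ with $|\textup{Aut}(K_4)| = 168$ and $|\textup{Aut}(W_6)| = 360$. The main obstacle I anticipate is securing the minimal-degree invariant data for each primitive group; once those classical dimensions of invariant subspaces are in hand, the remainder of the argument is pure bookkeeping.
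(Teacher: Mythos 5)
Your proposal is correct, but it takes a genuinely different route from the paper on the two substantive points. The reduction to the finite list of pairs $(G,d)$ via Theorem \ref{thm:sub_PGL} is the same in both arguments. After that, the paper stays entirely inside Riemann--Hurwitz-type estimates: the pairs with $d=5,7$ (and the orders $216,360$ for $d=4$) are killed in one line by the refined form of Hurwitz's bound in Theorem \ref{thm:Hur}, since for $g=3,6,15$ none of $168,216,360$ is an admissible value of $|G|$; and the uniqueness of the quartic and sextic is obtained by a B\'{e}zout--Oikawa trick: if $C$ were a second curve acted on by a conjugate of $\textup{PSL}(2,\mathbb{F}_7)$ (resp.\ $A_6$), then $C\cap K_4$ (resp.\ $C\cap W_6$) would be a non-empty $G$-invariant set of at most $d^2$ points, and Oikawa's inequality $|G|\le 12(g-1)+6d^2$ gives $168\le 120$ (resp.\ $360\le 324$), a contradiction. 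You instead settle both the elimination and the uniqueness by the classical invariant theory of the three primitive groups (Klein's degrees $4,6,14,21$, the Valentiner degrees $6,12,30,45$, and the Hessian group). That is a valid and standard alternative, and it buys you the explicit equations and the full list of invariant degrees; the cost is that it imports classical data external to the paper, whereas the paper's argument needs nothing beyond the inequalities already stated in Section \ref{Sec:Preliminary}. One small point to tighten in your version: for the perfect groups $\textup{PSL}(2,\mathbb{F}_7)$ and $3\cdot A_6$ semi-invariants are automatically invariants, but $H_{216}$ is not perfect, so the cited fact that its \emph{invariant} ring starts in degree $6$ does not by itself exclude a degree-$4$ or degree-$5$ \emph{relative} invariant; your fallback of checking common eigenvectors of the generators from Remark \ref{rem:main1}(3) does close this (the diagonal subgroup forces all exponent triples of a relative invariant to be congruent mod $3$, which is incompatible with $d\equiv 1,2 \pmod 3$ and the cyclic symmetry), but it should be said explicitly.
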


\begin{proof}
First note that $\textup{Aut}(C)$ is also primitive, which implies that $|G| \le |\textup{Aut}(C)| \le 360$ by Theorem \ref{thm:sub_PGL}. 
Hence $|G| < 6d^2$ if $d \ge 8$.

Assume that $d \le 7$.
If $d=5$ or $7$, then we have the inequality $|G| < 6d^2$ except for $(d, |G|) = (5,168)$, $(5,216)$, $(5,360)$ or $(7,360)$ again by Theorem \ref{thm:sub_PGL}\null.
It is easy to check by Theorem \ref{thm:Hur} that these four exceptional cases do not occur.

%%%%%%%%%%%%%%%%%%%%%%%%%%%%%%%%%%%%%%%%%%%%%%%%%%%%%%%%%%%%%%%%%%%%%%%%%%%%%%%%%%%%%%%%%%%%%%%%%%%%%%%%%%%%%%%%%%%%%%%%%%%%%%%%%%%%%%%%%%%%%
%%%%%% 2014/1/31 4次の場合は6次と同様なので省略 ################################################################################################## 
\begin{comment}
Assume that $d=4$. 
We then have the inequality $|G| \le 168$ by Hurwitz's theorem.
If $|G| < 168$, then $|G| \le 72 < 6d^2$ by Theorem \ref{thm:sub_PGL}\null.
Suppose that $|G| = 168$ and $C$ is not projectively equivalent to Klein quartic $K_4$.
Then $G$ is conjugate to the Klein group $\textup{PSL}(2, \mathbb{F}_7)$.
Hence we may assume that $G$ acts on both $C$ and $K_4$.
In particular $C \cap K_4$ is fixed by $G$.
This is a non-empty subset of $C$ of order at most $4^2=16$ by virtue of B\'{e}zout's theorem.
It follows from Oikawa's inequality that $168 = |G| \le 12 \cdot 2 +6 \cdot 16 = 120$, a contradiction. 
\end{comment}
%%%%%% 2014/1/31 4次の場合は6次と同様なので省略 ################################################################################################## 
%%%%%%%%%%%%%%%%%%%%%%%%%%%%%%%%%%%%%%%%%%%%%%%%%%%%%%%%%%%%%%%%%%%%%%%%%%%%%%%%%%%%%%%%%%%%%%%%%%%%%%%%%%%%%%%%%%%%%%%%%%%%%%%%%%%%%%%%%%%%%

Assume that $d=6$.
If $|G| < 360$, then $|G| \le 216 = 6d^2$ by Theorem \ref{thm:sub_PGL}\null.
Suppose that $|G| = 360$ and $C$ is not projectively equivalent to Wiman sextic $W_6$. 
Since $G$ is conjugate to $A_6$, we may assume that $G$ acts on both $C$ and $W_6$.
It follows from B\'{e}zout's theorem that $C \cap W_6$ is a non-empty subset of $C$ of order at most $6^2=36$,
which is fixed by $G$.
Applying Oikawa's inequality we come to the conclusion that $360 = |G| \le 12 \cdot 9 +6 \cdot 36 = 324$, a contradiction. 

For $d=4$, we can deduce the uniqueness of the quartic with the full automorphism group of maximum order in the same way as above. 
\end{proof}

We show Theorem \ref{thm:main2} by using Theorem \ref{thm:sub_PGL} and Oikawa's inequality. 

%%% 主定理2の証明 %%%%%%%%%%%%%%%%%%%%%%%%%%%%%%%%%%%%%%%%%%%%%%%%%%%%%%%%%%%%%%%%%%%%%%%%%%%%%%%%%%%%%%%%%%%%%%%%%%%%%%%%%%%%%%%%%%%
\begin{proof}[Proof of Theorem \ref{thm:main2}]
We may assume that $\textup{Aut}(C)$ is not primitive by virtue of Proposition \ref{prop:prim}.
Then it follows from Theorem \ref{thm:sub_PGL} that $\textup{Aut}(C)$ fixes a line or a triangle.
First suppose that $\textup{Aut}(C)$ fixes a line $L$.
Then $S:=C \cap L$ is a non-empty set of order at most $d$, which is also fixed by $\textup{Aut}(C)$.
Applying Theorem \ref{thm:OA} (1) we obtain the inequality
\[ |\textup{Aut}(C)| \le 12(g-1)+6|S| \le 6d(d-3)+6d = 6d(d-2) < 6d^2. \]
Next suppose that $\textup{Aut}(C)$ fixes a triangle $\Delta$.
Then $C \cap \Delta$ is a non-empty set of order at most $3d$, which is also fixed by $\textup{Aut}(C)$.
Thus we have the inequality $|\textup{Aut}(C)| \le 6d^2$ by the same argument as above.

Finally assume that $|\textup{Aut}(C)| = 6d^2$ and $d \ne 6$. 
From Proposition \ref{prop:prim} and the above argument $\textup{Aut}(C)$ fixes a triangle and does not fix a line.
Then $C$ is a descendant of Fermat curve $F_d$ by virtue of Theorem \ref{thm:main1}.
Comparing the order of two groups we know that $G = \textup{Aut}(F_d)$.
Let $X^iY^jZ^k$ $(i+j+k=d)$ be a term of $F$ without its coefficient.
Note that $[\zeta X,Y,Z]$ and $[X,\zeta Y,Z]$ ($\zeta$ is a primitive $d$-th root of unity), which are elements of $G$, preserve $F$.
Hence they also preserve the monomial $X^iY^jZ^k$.
Then $\zeta^i=\zeta^j=1$, which implies that $(i,j,k)=(d,0,0)$, $(0,d,0)$ or $(0,0,d)$.
It follows that $F = X^d+Y^d+Z^d$.
\end{proof}
%%% 主定理2の証明終わり %%%%%%%%%%%%%%%%%%%%%%%%%%%%%%%%%%%%%%%%%%%%%%%%%%%%%%%%%%%%%%%%%%%%%%%%%%%%%%%%%%%%%%%%%%%%%%%%%%%%%%%%%%%%%%%%%%%

In the rest of this section we show Theorem \ref{thm:main3}. 
Before starting our proof, we determine the full automorphism groups of curves in three exceptional cases (iii), (iv) and (v) in the theorem.

\begin{prop} \label{prop:D_{2(d-2)}}
Assume that $d \ge 4$ and $C$ is the smooth plane curve defined by the equation
$Z^d + XY(X^{d-2}+Y^{d-2}) = 0$. 
\begin{itemize}
\item[\textup{(i)}] If $d \ne 4,6$, then $\textup{Aut}(C)$ is a central extension of $D_{2(d-2)}$ by $\mathbb{Z}_d$.
In particular $|\textup{Aut}(C)| = 2d(d-2)$.
\item[\textup{(ii)}] If $d=6$, $\textup{Aut}(C)$ is a central extension of $S_4$ by $\mathbb{Z}_6$.
In particular $|\textup{Aut}(C)| = 144$.
\item[\textup{(iii)}] If $d=4$, then $C$ is isomorphic to Fermat quartic $F_4$.
In particular $\textup{Aut}(C) \simeq \mathbb{Z}_4^2 \rtimes S_3$ $(|\textup{Aut}(C)|=96)$.
\end{itemize}
\end{prop}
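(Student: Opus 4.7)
The plan is to produce a large subgroup $H \subseteq \textup{Aut}(C)$ by writing down explicit automorphisms, and then to invoke Theorem~\ref{thm:main1} together with the combinatorics of $C \cap L_3$ to obtain the matching upper bound.

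First I would verify directly that the three transformations
\[
\sigma_1 = [X,Y,\zeta_d Z], \quad \sigma_2 = [\zeta_{d-2} X, \zeta_{d-2}^{-1} Y, Z], \quad \tau = [Y,X,Z]
\]
preserve $F = Z^d + XY(X^{d-2} + Y^{d-2})$, where $\zeta_n$ is a primitive $n$-th root of unity. The element $\sigma_1$ is a homology with center $P_3 \notin C$ and axis $L_3$; it commutes with both $\sigma_2$ and $\tau$, and $\langle \sigma_2, \tau \rangle \simeq D_{2(d-2)}$ has trivial intersection with $\langle \sigma_1 \rangle$. Hence $H := \langle \sigma_1, \sigma_2, \tau \rangle \simeq \mathbb{Z}_d \times D_{2(d-2)}$ has order $2d(d-2)$, which already realizes the claimed central extension structure inside $\textup{Aut}(C)$.

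For (i) with $d \ne 4, 6$, I set $G = \textup{Aut}(C)$ and apply Theorem~\ref{thm:main1}. Case (a-i) is excluded since $H$ is non-cyclic for $d \ge 5$; case (c) is excluded once $d \ge 8$ because each primitive subgroup listed in Theorem~\ref{thm:sub_PGL}(c) has no element of order exceeding $7$, while $\sigma_1$ has order $d$, and the small exceptions $d=5,7$ are handled by inspecting which of $A_5$, $\textup{PSL}(2,\mathbb{F}_7)$, $A_6$ and the Hessian groups could contain a copy of $H$. Cases (b-i) and (b-ii) are ruled out by the divisibility $|H| = 2d(d-2) \mid |\textup{Aut}(F_d)| = 6d^2$ resp.~$|H| \mid |\textup{Aut}(K_d)| = 3(d^2-3d+3)$, which confines $d$ to a short list that can be checked directly. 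Once $G$ lies in case (a-ii), the kernel $N$ of $\rho \colon G \to G' \subseteq \textup{PGL}(2,\mathbb{C})$ contains $\langle \sigma_1 \rangle$, so $|N| = d$ and $G$ fixes $P_3$. The quotient $G' \supseteq D_{2(d-2)}$ must then preserve the set $C \cap L_3$, which consists of the two points $P_1, P_2$ together with the $d-2$ roots of $X^{d-2} + Y^{d-2} = 0$ on $L_3 \simeq \mathbb{P}^1$. For $d \ne 4, 6$ the stabilizer of this configuration in $\textup{PGL}(2,\mathbb{C})$ is exactly $D_{2(d-2)}$, giving $|G| = 2d(d-2)$.

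For (ii) $d = 6$, the six points of $C \cap L_3$ are $\textup{PGL}(2,\mathbb{C})$-equivalent to the vertices of a regular octahedron on $\mathbb{P}^1 \simeq S^2$, whose stabilizer in $\textup{PGL}(2,\mathbb{C})$ is the octahedral group $S_4$ of order $24$; exhibiting one explicit additional automorphism that lifts an order-$3$ element of $S_4$ (cycling the three antipodal pairs) extends $H$ to a central extension of $S_4$ by $\mathbb{Z}_6$ of order $144$, and the upper bound follows as in (i). For (iii) $d = 4$, I would exhibit the explicit linear change of coordinates $U = X+Y$, $V = e^{i\pi/4}(X-Y)$, $W = 8^{1/4} Z$ that transforms the defining equation into $U^4 + V^4 + W^4 = 0$ (up to a constant), identifying $C$ with Fermat quartic and invoking Proposition~\ref{prop:F}. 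The main obstacle will be the small-$d$ upper bound argument in (i): Theorem~\ref{thm:main1}(a-ii) only gives $|G| \le \max\{2d(d-2), 60d\}$, and the bounds in (b-i), (b-ii) exceed $2d(d-2)$ for small $d$, so one must combine divisibility constraints with the concrete description of $C \cap L_3$ to exclude an exceptional quotient $G' \in \{A_4, S_4, A_5\}$.
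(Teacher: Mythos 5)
Your construction of the lower-bound subgroup has a genuine gap when $d$ is even, and this is precisely the range where the factor of $2$ in $2d(d-2)$ is delicate. Your generator $\sigma_2=[\zeta_{d-2}X,\zeta_{d-2}^{-1}Y,Z]$ maps under $\rho$ to $\mathrm{diag}(\zeta_{d-2},\zeta_{d-2}^{-1})\sim\mathrm{diag}(\zeta_{d-2}^{2},1)$ in $\textup{PGL}(2,\mathbb{C})$, which has order $(d-2)/\gcd(2,d-2)$. So for even $d$ you get $\rho(\langle\sigma_2,\tau\rangle)\simeq D_{d-2}$, not $D_{2(d-2)}$; moreover $\sigma_2^{(d-2)/2}=\mathrm{diag}(-1,-1,1)=\mathrm{diag}(1,1,-1)=\sigma_1^{d/2}$ projectively, so $\langle\sigma_1\rangle\cap\langle\sigma_2,\tau\rangle$ is not trivial and $|H|=d(d-2)$, half of what you claim. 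The element of $\textup{Aut}(C)$ lifting the order-$(d-2)$ rotation cannot be taken with trivial $Z$-entry: $[\xi X,\xi^{-1}Y,Z]$ with $\xi$ a primitive $2(d-2)$-th root of unity sends $XY(X^{d-2}+Y^{d-2})$ to its negative while fixing $Z^d$. The paper's generator $\sigma=[\xi X,\xi^{-(d-1)}Y,Z]$ with $\xi$ of order $d(d-2)$ fixes this (one checks it preserves $F$ exactly and has $\rho(\sigma)$ of order $d-2$). Without such a generator your proof of (i) fails for all even $d\ge 8$, and in (ii) your $\rho(H)$ is only the Klein four-group, so adjoining an order-$3$ element yields $A_4$ and $|G|\ge 72$, not the required $S_4$ and $144$. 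Your $d=4$ computation is fine and matches the paper's.

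Apart from that, your upper-bound route is genuinely different from the paper's and worth comparing. You force $G=\textup{Aut}(C)$ into case (a-ii) of Theorem \ref{thm:main1} by eliminating the other cases, and then bound $G'$ by the stabilizer in $\textup{PGL}(2,\mathbb{C})$ of the $d$-point configuration $C\cap L_3$. The paper instead observes that $(0:0:1)$ is an outer Galois point, that $C\not\simeq F_d$ because $\sigma$ has order $d(d-2)>2d$ (Remark \ref{rem:ord_F}), and hence by Yoshihara's uniqueness theorem the Galois point is unique and fixed by $G$; this lands $G$ in $\textup{PBD}(2,1)$ in one step, after which $G'\in\{A_4,S_4,A_5\}$ is excluded by Oikawa's inequality $|G|\le 6d(d-2)$ combined with the element-order bound $\mathrm{ord}\,\sigma'=d-2\le 5$. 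Your route avoids the Galois-point machinery, but it leaves two further debts: the claim that the stabilizer of $\{0,\infty\}\cup\mu_{d-2}\cdot(-1)^{1/(d-2)}$ is exactly $D_{2(d-2)}$ for $d\ne 4,6$ needs an argument (rule out larger cyclic and dihedral overgroups and, for $d\le 7$, the exceptional groups), and the ``short list'' of degrees surviving the divisibility tests for (b-i), (b-ii) must actually be written down and dispatched. These are fillable, but the even-$d$ generator problem above is the step that actually breaks.
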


\begin{proof}
First assume that $d \ge 5$ and $d \ne 6$.
Note that $G=\textup{Aut}(C)$ contains three elements $\sigma=[\xi X,\xi^{-(d-1)} Y,Z]$, $\tau=[Y,X,Z]$ and $\eta=[X,Y,\zeta Z]$,
where $\xi$ (resp.~$\zeta$) is a primitive $d(d-2)$-nd (resp.~$d$-th) root of unity.
Then $H=\langle \sigma, \tau, \eta \rangle$ is a subgroup of $\textup{PBD}(2,1)$.
This is a central extension of $H' = \langle \sigma', \tau' \rangle \simeq D_{2(d-2)}$ by $\langle \eta \rangle \simeq \mathbb{Z}_d$,
where $\sigma'$ (resp.~$\tau'$) is the image of $\sigma$ (resp.~$\tau$) by the natural homomorphism $\rho: \textup{PBD}(2,1) \to \textup{PGL}(2,\mathbb{C})$.

Next we note that $C$ has an outer Galois point $P=(0:0:1)$.
Since $\sigma$ is of order $d(d-2) > 2d$ for $d>4$, it follows from Remark \ref{rem:ord_F} that $C$ is not isomorphic to Fermat curve $F_d$.
Hence $P$ is the unique outer Galois point for $C$ (see \cite[Theorem 4', Proposition 5']{Y}).
In particular $G$ fixes $P$.
Then it follows from Remark \ref{rem:main1} that $G$ also fixes a line not passing through $P$,
which is $L_3: Z=0$ since it is the only line fixed by $H$. 
Thus $G \subset \textup{PBD}(2,1)$,
from which we have the short exact sequence 
\[ 1 \to N=\textup{Ker} \rho \to G \stackrel{\rho} \to G'=\textup{Im} \rho \to 1. \]
By virtue of Theorem \ref{thm:main1}, the kernel $N$ coincides with $\langle \eta \rangle \simeq \mathbb{Z}_d$.
On the other hand, $G'$ is a finite subgroup of $\textup{PGL}(2,\mathbb{C})$ containing $H' \simeq D_{2(d-2)}$.
Hence $G'=H'$ or $G'$ isomorphic to $A_4$, $S_4$ or $A_5$ again by Theorem \ref{thm:main1}.
We show that $G'=H'$ by excluding the latter case.

Suppose that $G'$ isomorphic to $A_4$, $S_4$ or $A_5$.
Since $G$ fixes the line $L:Z=0$, the set $S = C \cap L$ is a non-empty subset of $C$ with $|S| \le d$.
It follows from Oikawa's inequality that 
\[ |G| \le 12(g-1) + 6 \cdot d = 6d(d-2), \quad (\ast) \] 
where $g=(d-1)(d-2)/2$ is the genus of $C$.

The order of an element of $G'$ is at most four (resp.~five) if $G' \simeq A_4$ or $S_4$ (resp.~$G' \simeq A_5$).
On the other hand,  $\textup{ord} \sigma' = d-2$.
It follows that $d=5$ (resp.~$d \le 7$) if $G' \simeq A_4$ or $S_4$ (resp.~$G' \simeq A_5$).

If $G' \simeq A_5$, then $60d = |G| \le 6d(d-2)$ from $(\ast)$, which implies that $d \ge 12$, a contradiction.

If $d=5$ and $G' \simeq S_4$, then $24 \cdot 5 = |G| \le 6 \cdot 5 \cdot 3$ from $(\ast)$ again, which is absurd.

If $d=5$ and $G' \simeq A_4$, then $H' \simeq D_6$ is isomorphic to a subgroup of $A_4$ of index two, which is impossible since $A_4$ has no such subgroup.
Thus we exclude this case.

Next assume that $d=6$.
We prove that $G=\textup{Aut}(C)$ is a central extension of $S_4$ by $\mathbb{Z}_6$.
It suffices to show that $G' \simeq S_4$.
Since $G'$ contains $H'$, a subgroup of order eight, $G'$ cannot be $A_4$.
Then we only have to find an element of $G'$ of order three for verifying that $G' \simeq S_4$.
Converting slightly the defining polynomial of $C$, we may assume that $C$ is defined by $Z^6-XY(X^4-Y^4)=0$.
Then it is easy to verify that $G'$ has an element of order three.
Indeed, a $3 \times 3$ matrix
\[
\begin{pmatrix}
\multicolumn{2}{c}{\multirow{2}*{\Large $A$}} & 0 \\
 & & 0 \\
0 & 0 & 1 
\end{pmatrix}
\left(
A = c 
\begin{pmatrix}
1 &  \sqrt{-1} \\
1 & -\sqrt{-1}
\end{pmatrix}
\right)
\]
gives an automorphism $\epsilon$ of $C$ for a suitable constant $c$.
Then $\epsilon' = \rho(\epsilon)$ is of order three.

Finally assume that $d=4$.
Set $F=Z^4+XY(X^2+Y^2)$.
Substituting $X+\sqrt{-1}Y$ (resp.~$X-\sqrt{-1}Y$) for $X$ (resp.~$Y$), $F$ is converted to 
\begin{align*}
 \tilde F &= Z^4 + (X+\sqrt{-1}Y)(X-\sqrt{-1}Y) ((X+\sqrt{-1}Y)^2 + (X-\sqrt{-1}Y)^2) \\
    &= Z^4 + (X^2+Y^2) \cdot 2(X^2-Y^2) \\
    &= Z^4 + 2 (X^4 - Y^4).
\end{align*}
Then it is clear that the curve defined by $\tilde F$ is isomorphic to Fermat quartic $F_4$.
\end{proof}

\begin{prop} \label{prop:F'_d}
For a positive integer $d=3m$, let $F'_d$ be a smooth plane curve defined by 
\[ X^{3m}+Y^{3m}+Z^{3m} -3 \lambda X^mY^mZ^m =0, \]
where $\lambda$ is a non-zero number with $\lambda^3 \ne 1$.
It is a descendant of Fermat curve $F_d$ and $\textup{Aut}(F'_d)$ is generated by five transformations $[\zeta^3 X,Y,Z]$, $[X, \zeta^3 Y, Z]$, $[\zeta X, \zeta^{-1} Y, Z]$, $[Y,Z,X]$ and $[X,Z,Y]$,
where $\zeta$ is a primitive $d$-th root of unity. 
In this case $|\textup{Aut}(C)| = 2d^2$.
\end{prop}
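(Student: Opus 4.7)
My plan is to establish $H \subseteq \textup{Aut}(F'_d)$ with $|H| = 2d^2$ by direct computation, then to use Theorem \ref{thm:main1} to establish the reverse inclusion. First I would verify that $F'_d$ is smooth under the hypothesis $\lambda \ne 0,\ \lambda^3 \ne 1$: setting the three partial derivatives of $F = X^{3m} + Y^{3m} + Z^{3m} - 3\lambda X^m Y^m Z^m$ to zero and using the resulting identities $X^{2m} = \lambda Y^m Z^m$, $Y^{2m} = \lambda X^m Z^m$, $Z^{2m} = \lambda X^m Y^m$ (the only case left after ruling out those in which a coordinate vanishes, which forces all three to vanish), one sees by multiplying the three that $\lambda^3 (XYZ)^{2m} = (XYZ)^{2m}$, which would force $\lambda^3 = 1$.

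Next I would verify directly that each of the five listed projective transformations preserves $F$ up to a nonzero scalar, then describe the structure of $H$: the diagonal subgroup $D = H \cap \{\textup{diagonal}\}$ equals $\{[\zeta^a X, \zeta^b Y, Z] : a + b \equiv 0 \pmod 3\}$ of order $d^2/3 = 3m^2$, and $\langle \beta, \gamma \rangle \cong S_3$ normalizes $D$ with trivial intersection, yielding $H = D \rtimes S_3$ and $|H| = 18m^2 = 2d^2$. Since the core of $F$ is $X^d + Y^d + Z^d$ and $H$ visibly acts on the Fermat polynomial, the pair $(F'_d, H)$ is a descendant of $F_d$ by construction.

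To prove $\textup{Aut}(F'_d) \subseteq H$, I would apply Theorem \ref{thm:main1} to $G = \textup{Aut}(F'_d)$. The key geometric observation is that the common fixed-point set of $D$ on $\mathbb{P}^2$ is exactly $\{P_1, P_2, P_3\}$, and the $S_3$-part of $H$ permutes these three points transitively, so $H$ (hence $G$) fixes no point in $\mathbb{P}^2$; this rules out (a-i) and (a-ii). Case (b-ii) is excluded because $\textup{Aut}(K_d)$ has odd order $3(d^2-3d+3)$ by Proposition \ref{prop:K} (note $d^2-3d+3$ is always odd), whereas $|H| = 2d^2$ is even. Case (c) requires $|G| \le 360$; this fails for $d \ge 15$ since $|H| = 2d^2 > 360$, and for the residual cases $d \in \{6,9,12\}$ one checks directly that $|H|$ is incompatible with any primitive group order---for instance, for $d = 6$ the candidates $|G| \in \{72, 216, 360\}$ are ruled out respectively because $H$ is not primitive, because the Sylow $2$-subgroups of $H$ and $H_{216}$ are the non-isomorphic groups $D_8$ and $Q_8$, and because $A_6$ has no subgroup of order $72$.

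Therefore case (b-i) holds, so by Theorem \ref{thm:sub_PGL} the group $G$ fixes some triangle. Since $H \subseteq G$ fixes this same triangle and the common fixed-point set of $D$ is $\{P_1, P_2, P_3\}$, the unique $H$-invariant triangle is $\Delta = \{L_1, L_2, L_3\}$; hence $G$ fixes $\Delta$ and consequently lies in the stabilizer $(\mathbb{C}^*)^2 \rtimes S_3$ of $\Delta$ in $\textup{PGL}(3, \mathbb{C})$, consisting of generalized monomial matrices. A direct computation shows that such a matrix $[\mu_1 X_{\sigma(1)}, \mu_2 X_{\sigma(2)}, \mu_3 X_{\sigma(3)}]$ preserves $F$ up to scalar precisely when $\mu_1^d = \mu_2^d = \mu_3^d$ and $(\mu_1 \mu_2 \mu_3)^m$ equals this common value---exactly the conditions defining elements of $H$. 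This gives $G \subseteq H$, completing the proof that $\textup{Aut}(F'_d) = H$ has order $2d^2$. The principal obstacle is the case analysis for small $d$ in ruling out the primitive case (c); everything else is routine computation or a direct application of the classification.
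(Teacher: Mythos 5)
Your proposal is correct, but it takes a genuinely different and much longer route than the paper. The paper's proof is essentially a two-line counting argument: after checking that the subgroup $H$ generated by the five listed transformations acts on $F'_d$ and has order $3m^2\cdot 6 = 2d^2$, it combines Lagrange's theorem ($2d^2$ divides $|\textup{Aut}(F'_d)|$) with Theorem \ref{thm:main2} (the order of $\textup{Aut}(F'_d)$ is a \emph{proper} factor of $6d^2=|\textup{Aut}(F_d)|$) to force $|\textup{Aut}(F'_d)|=2d^2$ and hence $\textup{Aut}(F'_d)=H$. You instead run the whole classification of Theorem \ref{thm:main1} on $G=\textup{Aut}(F'_d)$, eliminate cases (a-i), (a-ii), (b-ii) and (c), and then pin $G$ down inside the stabilizer of the coordinate triangle by an explicit computation on the defining polynomial. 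What your approach buys is self-containedness: you verify smoothness, exhibit the structure $H\cong D\rtimes S_3$ explicitly, and avoid leaning on the rather terse divisibility step ``$|G|$ is a proper factor of $6d^2$'' (Theorem \ref{thm:main2} as stated gives an inequality, not divisibility, so the paper's step is itself somewhat compressed). What it costs is the extra bookkeeping you acknowledge: the primitive case for $d\in\{6,9,12\}$, and also the assertion that $\Delta$ is the \emph{unique} $H$-invariant triangle, which needs one more sentence --- the vertices of an invariant triangle are only permuted by $D$, so you must pass to a bounded-index subgroup of $D$ that fixes them and still contains a non-homology, a point requiring a little care for small $d$. A remark: once you have landed in case (b-i) you could shortcut your final monomial-matrix computation, since being a descendant already gives $G\le\textup{Aut}(F_d)$, whence $|G|$ divides $6d^2$ and is a multiple of $2d^2$, and $|G|=6d^2$ is excluded by Lemma \ref{lem:Aut_Fermat} because $[\zeta X,Y,Z]$ does not preserve $F'_d$ --- which is in effect the paper's argument made precise.
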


\begin{proof}
Let $H$ be a subgroup of $G=\textup{Aut}(F'_d)$ generated by five transformations $[\zeta^3 X,Y,Z]$, $[X, \zeta^3 Y, Z]$, $[\zeta X, \zeta^{-1} Y, Z]$, $[Y,Z,X]$ and $[X,Z,Y]$. 
Note that $H$ also acts on Fermat curve $F_d$.
It is easy to check that $|H| = 3m^2 \cdot 6 = 2d^2$, which divides the order of $G$.
On the other hand, $|G|$ is a proper factor of $6d^2$ from Theorem \ref{thm:main2}.
Thus $|G| = 2d^2$, which implies that $G=H$.
In particular $C$ is a descendant of Fermat curve $F_d$.
\end{proof}

\begin{prop} \label{prop:F''_d}
For a positive even integer $d=2m \ge 8$, let $F''_d$ be a smooth plane curve defined by 
\[ X^{2m}+Y^{2m}+Z^{2m}+\lambda(X^mY^m+Y^mZ^m+Z^mX^m) =0, \]
where $\lambda \ne 0, -1, \pm2$.
It is a descendant of Fermat curve $F_d$ and $\textup{Aut}(F''_d)$ is generated by four transformations $[\zeta^2 X,Y,Z]$, $[X, \zeta^2 Y, Z]$, $[Y,Z,X]$ and $[X,Z,Y]$,
where $\zeta$ is a primitive $d$-th root of unity. 
It is isomorphic to a semidirect product of $S_3$ acting on $\mathbb{Z}_m^2$, in other words, there exists a split short exact sequence of groups 
\[ 1 \to \mathbb{Z}_m^2 \to \textup{Aut}(F''_d) \to S_3 \to 1. \] 
In particular $|\textup{Aut}(F'_d)| =  6m^2 = (3/2)d^2$.
\end{prop}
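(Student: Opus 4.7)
The plan is to parallel the proof of Proposition \ref{prop:F'_d}. Let $H$ denote the subgroup of $G=\textup{Aut}(F''_d)$ generated by the four transformations in the statement. A direct computation, invoking $\zeta^d=1$ (so $\zeta^{2m}=1$) to verify cancellation of scalars, shows each generator preserves both $F''_d$ and the Fermat polynomial $X^d+Y^d+Z^d$; hence $H\subset G\cap\textup{Aut}(F_d)$ and $(F''_d,H)$ is a descendant of $F_d$. Since $[\zeta^2X,Y,Z]$ and $[X,\zeta^2Y,Z]$ commute, each have order $m$, and generate a normal subgroup isomorphic to $\mathbb{Z}_m^2$, while $[Y,Z,X]$ and $[X,Z,Y]$ generate an $S_3$ complement permuting coordinates, one has $H\simeq\mathbb{Z}_m^2\rtimes S_3$ with $|H|=6m^2$.

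To establish $G=H$, I apply Theorem \ref{thm:main1} to $G$. Case (a-i) is excluded by $|G|\ge 6m^2>d(d-1)=4m^2-2m$. Case (a-ii) is excluded because the common fixed points of $[Y,Z,X]$ and $[X,Z,Y]$ reduce to $(1:1:1)$, which is not fixed by $[\zeta^2X,Y,Z]$ since $\zeta^2\ne 1$ for $d\ge 8$, so $H$ has no common fixed point in $\mathbb{P}^2$ and neither does $G$. For case (c), the constraint $6m^2\mid|G|\le 360$ limits $m\le 7$; inspecting the primitive orders $36,60,72,168,216,360$ leaves only $m=6$, $|G|=216$, which forces $G=H$, but $H$ fixes the coordinate triangle and is therefore not primitive, a contradiction. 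In case (b-ii), the bound $|G|\le 3(d^2-3d+3)<12m^2$ combined with $6m^2\mid|G|$ forces $|G|=6m^2$.

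The main obstacle is case (b-i). Here $G$ is conjugate in $\textup{PGL}(3,\mathbb{C})$ to a subgroup of $\textup{Aut}(F_d)=\mathbb{Z}_d^2\rtimes S_3$, so $|G|$ divides $6d^2=24m^2$, and the candidates divisible by $6m^2$ are $6m^2$, $12m^2$, $24m^2$. The value $24m^2$ would force $F''_d$ to be projectively equivalent to Fermat $F_d$ by Theorem \ref{thm:main2}, which is excluded by the hypothesis $\lambda\ne 0,-1,\pm 2$ (verifiable by comparing the Hessian, which for Fermat is a scalar multiple of $(XYZ)^{d-2}$, whereas for $F''_d$ with nonzero $\lambda$ acquires additional monomials). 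To eliminate $|G|=12m^2$, I note that $\mathbb{Z}_d^2\rtimes S_3$ has a unique subgroup of index $2$, namely $\mathbb{Z}_d^2\rtimes\mathbb{Z}_3$; this follows from the abelianization $\mathbb{Z}_{\gcd(d,3)}\oplus\mathbb{Z}_2$ admitting a unique surjection onto $\mathbb{Z}_2$, together with the fact that any index-$2$ subgroup is normal. Were $G$ conjugate to this subgroup, then in suitable coordinates $G$ would contain the entire diagonal torus $\mathbb{Z}_d^2$. The only degree-$d$ monomials invariant under this torus are $X^d$, $Y^d$, $Z^d$, and (when $3\mid d$) $X^{d/3}Y^{d/3}Z^{d/3}$; combined with the cyclic $\mathbb{Z}_3$ permutation, the transformed defining polynomial must take the form $\alpha(X^d+Y^d+Z^d)$ (if $3\nmid d$) or $\alpha(X^d+Y^d+Z^d)+\delta X^{d/3}Y^{d/3}Z^{d/3}$ (if $3\mid d$). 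The former makes $F''_d$ projectively equivalent to $F_d$, forcing $|G|=24m^2$ and contradicting $|G|=12m^2$. The latter places $F''_d$ in the family of Proposition \ref{prop:F'_d}, whose automorphism group has order $2d^2=8m^2<12m^2$, again contradicting $|G|=12m^2$. Thus $|G|=6m^2$ and $G=H$, yielding the split exact sequence in the statement.
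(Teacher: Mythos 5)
Your overall architecture is sound and most of the individual steps check out: the identification of $H\simeq\mathbb{Z}_m^2\rtimes S_3$, the exclusion of cases (a-i), (a-ii), (c) and the order count in (b-ii) are all correct, and your treatment of the index-two subgroup of $\mathbb{Z}_d^2\rtimes S_3$ in the $12m^2$ subcase is a legitimate (if longer) alternative to what the paper does. However, there is a genuine gap in your elimination of $|G|=24m^2$. That subcase reduces, as you say, to showing $F''_d$ is not projectively equivalent to $F_d$, and your justification --- that the Hessian of $F''_d$ ``acquires additional monomials'' beyond $(XYZ)^{d-2}$ --- does not prove this. Projective equivalence does not preserve the set of monomials of a covariant; what it preserves is the Hessian \emph{curve} up to projective equivalence. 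So you would need to show that the Hessian determinant of $F''_d$ is not a product of three linear forms each raised to the power $d-2$, which is a nontrivial computation in $m$ and $\lambda$ that you have not carried out. (That some hypothesis beyond $\lambda\neq 0$ might be needed is not fanciful: for $d=4$ the pencil $X^4+Y^4+Z^4+\lambda(X^2Y^2+Y^2Z^2+Z^2X^2)$ does contain curves with extra symmetry at special nonzero $\lambda$, which is one reason the proposition assumes $d\ge 8$.)

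The paper avoids this issue entirely with one short computation that disposes of both the $12m^2$ and $24m^2$ possibilities at once: if $G\supsetneq H$ inside $\textup{Aut}(F_d)$, then multiplying by elements of $H$ and conjugating by the $S_3\subset H$, one may assume $\eta_1=[\zeta X,Y,Z]\in G$. But $\eta_1$ multiplies $X^mY^m$ and $Z^mX^m$ by $\zeta^{-m}=-1$ while fixing $X^d$, $Y^d$, $Z^d$ and $Y^mZ^m$, so it cannot preserve $F''_d$ up to a constant when $\lambda\neq 0$. Hence $G\cap\textup{Aut}(F_d)=H$ and $G=H$. I recommend you replace the Hessian argument (and in fact the whole order-by-order analysis of case (b-i)) with this direct coset computation; it is both shorter and closes the gap.
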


\begin{proof}
Let $H$ be a subgroup of $G=\textup{Aut}(F''_d)$ generated by four transformations 
$[\zeta^2 X,Y,Z]$, $[X, \zeta^2 Y, Z]$, $[Y,Z,X]$ and $[X,Z,Y]$. 
This is a semidirect product of $S_3$ acting on $\mathbb{Z}_m^2$. 
In particular $|G| $ is divided by $|H| = 6m^2$.
Then it is easy to verify that $G$ is not isomorphic to any group in Theorem \ref{thm:main1} (c), 
since $m \ge 4$ and $H$ is not isomorphic to the Hessian group $H_{216}$.
Furthermore, since $H$ fixes no point, neither does $G$.
Thus we conclude that $F''_d$ is a descendant of Fermat curve $F_d$ or Klein curve $K_d$ by using Theorem \ref{thm:main1}.
Since $G$ has an even order, the latter is not the case.
Hence $F''_d$ is a descendant of Fermat curve $F_d$.

Suppose that $G$ contains an element of $\textup{Aut}(F_d)$ outside $H$.
Then it can be converted by $H$ to the transformation $[\zeta X, Y, Z]$.
This transformation, however, does not act on $C$, which shows that $G = H$.
\end{proof}

Next we classify descendants of Fermat curve with automorphism groups of large order.
Let $d$ be an integer at least four, $\zeta$ a primitive $d$-th root of unity.
In what follows we denote the projective transformations $[\zeta X,Y,Z]$, $[X,\zeta Y,Z]$ and $[X,Y,\zeta Z]$
by $\eta_1$, $\eta_2$ and $\eta_3$, respectively.

%%%%%%%%%%%%%%%%%%%%%%%%%%%%%%%%%%%%%%%%%%%%%%%%%%%%%%%%%%%%%%%%%%%%%%%%%%%%%%%%%%%%%%%%%%%%%%%%%%%%%%%%%%%%%%%%%%%%%%%%%%%%
%%% Fermat descendant についての注意 (始) %%%%%%%%%%%%%%%%%%%%%%%%%%%%%%%%%%%%%%%%%%%%%%%%%%%%%%%%%%%%%%%%%%%%%%%%%%%%%%%%%%%%%%%%%
\begin{lemma} \label{lem:Aut_Fermat}
Let $C$ be a descendant of Fermat curve $F_d$ $(d \ge 4)$ and $G=\textup{Aut}(C)$.
Then there exists a commutative diagram
%%%%%%%%%%%%%%%%%%%%%%%%%%%%%%%%%%%%%%%%%%%%%%%%%%%%%%%%%%%%%
\begin{align*}
1 \to \; & \mathbb{Z}_d \times \mathbb{Z}_d \to \textup{Aut}(F_d) \stackrel{\rho} \longrightarrow S_3 \; \to 1 \quad (\textup{exact}) \\
& \quad \; \; \rotatebox{90}{$\hookrightarrow$} \hspace{4.2em} \rotatebox{90}{$\hookrightarrow$} \hspace{4.3em} \rotatebox{90}{$\hookrightarrow$} \\
1 \to \; & \quad \; H \; \; \longrightarrow \; \; \; \; G \; \; \longrightarrow \; \; \; \; G' \; \to 1 \quad (\textup{exact}), 
\end{align*}
%%%%%%%%%%%%%%%%%%%%%%%%%%%%%%%%%%%%%%%%%%%%%%%%%%%%%%%%%%%%%
where $H = \textup{Ker} (\rho|_G)$ and $G' = \textup{Im} (\rho|_G)$.
\begin{itemize}
\item[(1)] If $G$ contains two of three $\eta_t$'s then it contains the other and $C$ is projectively equivalent to $F_d$.
\item[(2)] If $G'$ is of order at least three and $G$ contains an $\eta_t$ for some $t$ $(1 \le t \le 3)$, 
then $G$ contains all $\eta_t$'s and $C$ is projectively equivalent to $F_d$.
\end{itemize}
\end{lemma}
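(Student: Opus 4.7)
The plan is to first establish the diagram as a tautological consequence of the descendant hypothesis, then prove (1) by inspecting how the $\eta_t$'s act on monomials, and finally deduce (2) from (1) via a conjugation trick. Since $(C, G)$ is a descendant of $F_d$, the definition gives $G \subset \textup{Aut}(F_d)$, so I would simply restrict the split short exact sequence of Proposition \ref{prop:F} to $G$ to obtain the commutative diagram with $H = G \cap \ker \rho$ and $G' = \rho(G) \subset S_3$. The substantive content then lies entirely in parts (1) and (2).

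For (1), I would start from the identity $\eta_1 \eta_2 \eta_3 = [\zeta X, \zeta Y, \zeta Z] = \textup{id}$ in $\textup{PGL}(3,\mathbb{C})$, which shows that any two of the $\eta_t$'s generate the third. Assuming $G$ contains all three, I would write the defining equation of $C$ as $F = X^d + Y^d + Z^d + \text{(low terms)}$. Because each $\eta_t$ fixes the core exactly, it must preserve $F$ exactly (not merely up to a scalar); applied to any nonzero low term $c_{ijk} X^i Y^j Z^k$ this forces $\zeta^i = \zeta^j = \zeta^k = 1$ in succession, and since $0 \le i,j,k < d$ all three indices vanish, contradicting $i + j + k = d$. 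So no low terms survive, giving $F = X^d + Y^d + Z^d$, i.e.\ $C$ is projectively equivalent to $F_d$.

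For (2), the observation driving the argument is that every subgroup of $S_3$ of order at least three contains a 3-cycle, so $G$ must contain some $\sigma$ projecting to such a cycle, and any such lift has the form $\sigma = [aY, bZ, cX]$ (the opposite 3-cycle is handled symmetrically). I expect the crucial step to be a short direct matrix computation verifying
\[
\sigma \eta_1 \sigma^{-1} = \eta_3, \qquad \sigma \eta_3 \sigma^{-1} = \eta_2, \qquad \sigma \eta_2 \sigma^{-1} = \eta_1,
\]
so that $\sigma$ cyclically permutes the three $\eta_t$'s. Granted this, the hypothesis $\eta_t \in G$ for a single $t$ upgrades to $G$ containing all three, and part (1) finishes the argument. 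The main (essentially the only) obstacle is this conjugation identity; once it is in hand, both parts are immediate.
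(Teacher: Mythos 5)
Your proposal is correct and follows essentially the same route as the paper: part (1) by forcing every low monomial to be invariant under the diagonal $\eta_t$'s, and part (2) by producing a lift in $G$ of a $3$-cycle and using it to move one $\eta_t$ to the other two. The only cosmetic difference is that you conjugate $\eta_t$ directly by the raw lift $[aY,bZ,cX]$ (the scalars $a,b,c$ indeed cancel since $\eta_t$ is diagonal), whereas the paper first normalizes the lift to the pure permutation $[Z,X,Y]$ before conjugating; both computations are valid.
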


\begin{proof}
(1) It is clear that $G$ contains all $\eta_t$'s.
Take a defining polynomial of $C$ whose core is $X^d+Y^d+Z^d$.
Let $X^iY^jZ^k$ $(i+j+k=d)$ be any term of the polynomial without its coefficient.
Since it is invariant for the action of each $\eta_t$, we have the equality $\zeta^i=\zeta^j=\zeta^k=1$,
or equivalently $(i,j,k)=(d,0,0)$, $(0,d,0)$ or $(0,0,d)$, which implies the assertion.

\noindent
(2) Assume that $|G'| \ge 3$ and $G$ contains an $\eta_t$ for some $t$ $(1 \le t \le 3)$.
We may assume that $t=1$ without loss of generality.
Noting that $G' \simeq \mathbb{Z}_3$ or $S_3$ since $G'$ is a subgroup of $S_3$,
we may further assume that $G$ contains an element of order three represented by a projective transformation
$[\zeta^a Y, \zeta^b Z, X]$,
where $a$ and $b$ are integers with $0 \le a, b <d$.
Then $G$ contains $[Y, \zeta^b Z,X]$ since $\eta_1 \in G$, which implies that
$(\eta_1^{-1})^b [Y, \zeta^b Z, X]^2 (\eta_1^{-1})^b = [Z, X, Y] \in G$.
Thus $G$ contains $\eta_1$ and $[Z, X, Y]$, which implies that it also contains $\eta_2$.
In particular $C$ is projectively equivalent to $F_d$ from (1).
\end{proof}
%%% Fermat descendant についての注意 (終) %%%%%%%%%%%%%%%%%%%%%%%%%%%%%%%%%%%%%%%%%%%%%%%%%%%%%%%%%%%%%%%%%%%%%%%%%%%%%%%%%%%%%%%%%
%%%%%%%%%%%%%%%%%%%%%%%%%%%%%%%%%%%%%%%%%%%%%%%%%%%%%%%%%%%%%%%%%%%%%%%%%%%%%%%%%%%%%%%%%%%%%%%%%%%%%%%%%%%%%%%%%%%%%%%%%%%%

By using the above lemma we obtain a characterization of two descendants $F'_d$ and $F''_d$ of Fermat curve.

%%%%%%%%%%%%%%%%%%%%%%%%%%%%%%%%%%%%%%%%%%%%%%%%%%%%%%%%%%%%%%%%%%%%%%%%%%%%%%%%%%%%%%%%%%%%%%%%%%%%%%%%%%%%%%%%%%%%%%%%%%%%
%%% 特別な Fermat descendant の分類 (始) %%%%%%%%%%%%%%%%%%%%%%%%%%%%%%%%%%%%%%%%%%%%%%%%%%%%%%%%%%%%%%%%%%%%%%%%%%%%%%%%%%%%%%%%%
\begin{lemma} \label{lem:descendant}
For $d \ge 8$, two curves $F'_d$ and $F''_d$ are the only descendants of Fermat curve $F_d$ whose group of automorphisms has order greater than $d^2$ 
up to projective equivalence, except $F_d$ itself.
\end{lemma}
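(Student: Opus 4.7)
The plan is to apply Lemma~\ref{lem:Aut_Fermat} to $G = \textup{Aut}(C)$ and analyse the factorisation $|G| = |H| \cdot |G'|$, where $H \le \mathbb{Z}_d \times \mathbb{Z}_d$ and $G' \le S_3$. Since $|H|$ divides $d^2$ and $|G'|$ divides $6$, the hypothesis $|G| > d^2$ leaves only finitely many cases. First I would observe that $|H| = d^2$ forces $H = \mathbb{Z}_d \times \mathbb{Z}_d$, and hence $C$ to be projectively equivalent to $F_d$ by Lemma~\ref{lem:Aut_Fermat}\,(1); so throughout what follows $|H| \le d^2/2$.

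I would then dispose of the cases $|G'| \le 3$. The cases $|G'| \le 2$ yield $|G| \le d^2$ directly. For $|G'| = 3$, the bound $|G| > d^2$ forces $|H| = d^2/2$, so $H$ would be a $\mathbb{Z}_3$-invariant subgroup of index $2$ in $\mathbb{Z}_d \times \mathbb{Z}_d$; but reducing modulo $2$, the generator of $\mathbb{Z}_3$ cyclically permutes the three non-zero vectors of $\mathbb{F}_2 \times \mathbb{F}_2$, so no such invariant subgroup exists.

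The main case is $|G'| = 6$, where the index $[\mathbb{Z}_d \times \mathbb{Z}_d : H]$ lies in $\{2,3,4,5\}$ and $H$ is invariant under the conjugation action of $S_3$ on $\mathbb{Z}_d \times \mathbb{Z}_d$, which is the standard two-dimensional representation. I would handle the four indices separately. Index~$2$ is excluded because $S_3$ acts on $\mathbb{F}_2 \times \mathbb{F}_2$ through the isomorphism $S_3 \simeq \textup{GL}_2(\mathbb{F}_2)$, hence transitively on non-zero vectors, leaving no invariant subgroup of index~$2$. For index~$3$ (which requires $3 \mid d$) the map $\phi(a,b) = a + b \pmod 3$ is $S_3$-invariant and, up to a scalar, the only such surjection; its kernel realises the subgroup of $F'_d$. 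For index~$4$ the quotient is $\mathbb{Z}_4$ or $\mathbb{Z}_2 \times \mathbb{Z}_2$: in the latter case $H$ must contain $2(\mathbb{Z}_d \times \mathbb{Z}_d)$ and equals it by order, giving the subgroup of $F''_d$; in the cyclic case, the order-$3$ element of $S_3$ must act trivially on $\mathbb{Z}_4$ (since $|\textup{Aut}(\mathbb{Z}_4)| = 2$), and a short calculation then forces any equivariant surjection to vanish. Index~$5$ is excluded by the same triviality-of-$\tau$ argument as for $\mathbb{Z}_4$, since $\gcd(3, |\textup{Aut}(\mathbb{Z}_5)|)=1$.

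It remains to recover the defining equation of $C$ for each surviving $H$. Since $C$ is a descendant of $F_d$, its defining polynomial $F$ has core $X^d + Y^d + Z^d$, and $H$-invariance of $F$ translates into $ai + bj \equiv 0 \pmod d$ for every $(a,b) \in H$ and every monomial $X^i Y^j Z^{d-i-j}$ of $F$. Direct bookkeeping shows that for the two kernels found above the only admissible low terms are $X^m Y^m Z^m$ (with $d = 3m$) and an $S_3$-symmetric combination of $X^m Y^m$, $Y^m Z^m$, $Z^m X^m$ (with $d = 2m$), giving exactly the shapes of $F'_d$ and $F''_d$. The assumption $d \ge 8$ is used only here, to avoid low-degree coincidences. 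The main obstacle is the index-$4$ analysis, which is the one case requiring genuine care with the modular behaviour of $S_3$-representations over $\mathbb{F}_2$.
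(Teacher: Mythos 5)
Your proof is correct and arrives at the same two curves, but the middle of the argument runs along a genuinely different track from the paper's. Both proofs start from the exact sequence of Lemma~\ref{lem:Aut_Fermat} and first force $G' = S_3$; however, the paper eliminates $G' \simeq \mathbb{Z}_3$ by exhibiting an element $\eta_1^a\eta_2^b \in H$ with $a$ or $b$ odd, deducing that $H$ contains some $\eta_t$, and invoking Lemma~\ref{lem:Aut_Fermat}\,(2), whereas you eliminate it by reducing mod $2$ and noting that the $3$-cycle permutes the three index-two subgroups of $\mathbb{Z}_d \times \mathbb{Z}_d$. The larger divergence is in the case $G' = S_3$: the paper does not classify $H$ by its index at all, but splits according to whether a coordinate projection $\varpi_i|_H$ is surjective. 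In the surjective case it uses a single element $[\zeta X, \zeta^e Y, Z] \in H$ together with the $S_3$-translates of the congruence $i + ej \equiv 0 \pmod d$ to force $i=j=k=d/3$ directly, never determining $H$; in the non-surjective case a counting argument pins down $H = \langle \eta_1^2, \eta_2^2 \rangle$. You instead classify all $S_3$-invariant subgroups of index $2,3,4,5$ as submodules of the standard representation, via transitivity of $\mathrm{GL}_2(\mathbb{F}_2)$ on $\mathbb{F}_2^2\setminus\{0\}$, the unique invariant line mod $3$, and (for cyclic quotients of order $4$ and $5$) the identity $v + cv + c^2v = 0$ combined with the trivial action of the $3$-cycle on the quotient. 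Your route is more systematic and makes it structurally transparent why only the two subgroups attached to $F'_d$ and $F''_d$ survive; the paper's is shorter and more elementary because the surjective case bypasses the determination of $H$ entirely. The concluding monomial bookkeeping is essentially identical in both, and both leave the normalization of the three coefficients in the $F''_d$ case at the same level of detail, so I see no gap in your argument.
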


\begin{proof}
Let $C$ be a descendant of $F_d$ such that $C$ is not isomorphic to $F_d$ and $G=\textup{Aut}(C)$ has order greater than $d^2$.
Then $G$ is a proper subgroup of $\textup{Aut}(F_d)$ from Theorem \ref{thm:main2},
which implies that $|G| = 3d^2, 2d^2, (3/2)d^2$ or $(6/5)d^2$.
Recall the commutative diagram in Lemma \ref{lem:Aut_Fermat}:
%%%%%%%%%%%%%%%%%%%%%%%%%%%%%%%%%%%%%%%%%%%%%%%%%%%%%%%%%%%%%
\begin{align*}
1 \to & \mathbb{Z}_d \times \mathbb{Z}_d \to \textup{Aut}(F_d) \stackrel{\rho} \longrightarrow S_3 \; \to 1 \quad (\textup{exact}) \\
& \quad \; \; \rotatebox{90}{$\hookrightarrow$} \hspace{4.2em} \rotatebox{90}{$\hookrightarrow$} \hspace{4.3em} \rotatebox{90}{$\hookrightarrow$} \\
1 \to & \quad \; H \; \; \longrightarrow \; \; \; G \; \; \longrightarrow \; \; \; \; G' \; \to 1 \quad (\textup{exact}), 
\end{align*}
%%%%%%%%%%%%%%%%%%%%%%%%%%%%%%%%%%%%%%%%%%%%%%%%%%%%%%%%%%%%%
where $H = \textup{Ker} (\rho|_G)$ and $G' = \textup{Im} (\rho|_G)$.
%%% Note that $G$ is a semidirect product of $S_3$ acting on $H$.

First we show that $G'$ coincides with $S_3$.
Indeed, if $G'$ is a proper subgroup of $S_3$, then $G' \simeq \mathbb{Z}_2$ or $\mathbb{Z}_3$.

If $G' \simeq \mathbb{Z}_2$ then $H = \mathbb{Z}_d \times \mathbb{Z}_d$ since $|G| >d^2$,
which implies that $C$ is projectively equivalent to $F_d$ from Lemma \ref{lem:Aut_Fermat} (1), a contradiction.

If $G' \simeq \mathbb{Z}_3$ then $H = \mathbb{Z}_d \times \mathbb{Z}_d$ or $H$ is a subgroup of $\mathbb{Z}_d \times \mathbb{Z}_d$ of index two.
In the former case $C$ is projectively equivalent to $F_d$ again from Lemma \ref{lem:Aut_Fermat} (1), a contradiction. 
In the latter case $\sigma^2 \in H$ for any $\sigma \in \mathbb{Z}_d \times \mathbb{Z}_d$, which implies that
$H$ contains a subgroup $\langle \eta_1^2, \eta_2^2 \rangle$, which is of order $(d/2)^2=d^2/4$.
Therefore there exists an extra element of $H$, which is written as $\eta_1^a \eta_2^b$, where $a$ or $b$ is odd.
Then $H$ also contains $\eta_1$, $\eta_2$ or $\eta_1 \eta_2 = \eta_3^{-1}$, or equivalently, $H$ contains at least one $\eta_t$,
which implies a contradiction from Lemma \ref{lem:Aut_Fermat} (2).
Thus $G'$ coincides $S_3$.

Next we consider the group $H$, which is a subgroup of $\mathbb{Z}_d \times \mathbb{Z}_d$ of index less than six.
Let $F$ be a defining homogeneous polynomial of $C$.
We may assume that $F$ is written as 
\[ 
F = X^d+Y^d+Z^d+ \textup{(low terms)}. 
\]
Let $X^i Y^j Z^k$ $(i+j+k=d$, $0 \le i, j, k < d)$ be any low term of $F$ without its coefficient.
%%% Note that $i$, $j$ and $k$ are less than $d$.
%%% Furthermore, any monomial $X^{i'} Y^{j'} Z^{k'}$ with $\{ i',j',k' \} = \{ i,j,k \}$ arises in $F$ and is fixed by $H$
%%% because $G' \simeq S_3$ and $H$ fixes $X^d+Y^d+Z^d$.

Consider two projections $\varpi_1 : \mathbb{Z}_d^{(1)} \times \mathbb{Z}_d^{(2)} \to \mathbb{Z}_d^{(1)} $ $( \eta_1^a \eta_2^b \mapsto \eta_1^a)$ and $\varpi_2 : \mathbb{Z}_d^{(1)} \times \mathbb{Z}_d^{(2)} \to \mathbb{Z}_d^{(2)} $ $( \eta_1^a \eta_2^b \mapsto \eta_2^b)$
and their restriction $\varpi_1|_H : H \to \mathbb{Z}_d^{(1)} $ and $\varpi_1|_H : H \to \mathbb{Z}_d^{(2)} $.
There are two cases:

\begin{itemize}
\item[\textup{(i)}] $\varpi_1|_H$ or $\varpi_2|_H$ is surjective.
\item[\textup{(ii)}] Neither $\varpi_1|_H$ nor $\varpi_2|_H$ is surjective.
\end{itemize}

\noindent
Case\,(i) We may assume that $\varpi_1|_H$ is surjective.
In this case $H$ contains an element $[\zeta X, \zeta^e Y, Z]$ ($0 \le e <d$).
It fixes the polynomial $X^i Y^j Z^k$, which implies that $i+ej \equiv 0$ (mod $d$).
Since $G' \simeq S_3$, we also see that $j+ek \equiv k+ei \equiv j+ei \equiv 0$ (mod $d$).
It follows from these congruences that $i \equiv j \equiv k$ (mod $d$), which implies that $i=j=k=d/3$.
Therefore $F = X^{3m}+Y^{3m}+Z^{3m}+ cX^mY^mZ^m$, where $d =3m$ and $c \ne 0$.
Furthermore, we can write $c = -3 \lambda$, where $\lambda$ is a non-zero number with $\lambda^3 \ne 1$ because $C$ is non-singular.
Thus we see that $C = F'_d$ in this case.

\vspace{1em}

\noindent
Case\,(ii) Neither $\varpi_1|_H$ nor $\varpi_2|_H$ is surjective.
Note that $|\textup{Ker} (\varpi_i|_H)| \le d/2$ $(i=1,2)$ since $H$ does not contain $\eta_1$ or $\eta_2$.
Then we see that $|\textup{Ker} (\varpi_1|_H)| = |\textup{Im} (\varpi_1|_H)| = |\textup{Ker} (\varpi_2|_H)| = |\textup{Im} (\varpi_2|_H)| = d/2$
because $H$ is a subgroup of $\mathbb{Z}_d \times \mathbb{Z}_d$ of index less than six.
In particular $d$ is an even integer, say $d=2m$ and $|H| = m^2$.
We also see that $\textup{Ker} (\varpi_1|_H)$ is a subgroup of $\mathbb{Z}_d = \langle \eta_1 \rangle$ of index two, that is to say, $\textup{Ker} (\varpi_1|_H) = \langle \eta_1^2 \rangle$.
Thus $H$ contains $\eta_1^2$.
In the same way we can show that $H$ also contains $\eta_2^2$.
Then $H$ contains the subgroup $\langle \eta_1^2, \eta_2^2 \rangle$,
which implies that $H= \langle \eta_1^2, \eta_2^2 \rangle$ since both of them have the same order $m^2$.

Both $\eta_1^2$ and $\eta_2^2$ fixes the monomial $X^i Y^j Z^k$.
It follows that $2i \equiv 2j \equiv 0 $ (mod $d$), which implies that $(i,j,k) = (m,m,0)$, $(m,0,m)$ or $(0,m,m)$.
Then it is easy to show that $F$ can be written as $X^{2m}+Y^{2m}+Z^{2m}+\lambda(X^mY^m+Y^mZ^m+Z^mX^m)$ $(\lambda \ne 0)$.
Note that $\lambda \ne -1, \pm2$ since $C$ is non-singular.
We thus conclude that $C =F''_d$ in this case.
\end{proof}
%%% 特別な Fermat descendant の分類 (終) %%%%%%%%%%%%%%%%%%%%%%%%%%%%%%%%%%%%%%%%%%%%%%%%%%%%%%%%%%%%%%%%%%%%%%%%%%%%%%%%%%%%%%%%%
%%%%%%%%%%%%%%%%%%%%%%%%%%%%%%%%%%%%%%%%%%%%%%%%%%%%%%%%%%%%%%%%%%%%%%%%%%%%%%%%%%%%%%%%%%%%%%%%%%%%%%%%%%%%%%%%%%%%%%%%%%%%

%%%%%%%%%%%%%%%%%%%%%%%%%%%%%%%%%%%%%%%%%%%%%%%%%%%%%%%%%%%%%%%%%%%%%%%%%%%%%%%%%%%%%%%%%%%%%%%%%%%%%%%%%%%%%%%%%%%%%%%%%%%%
%%% 2014/1/30 K_dのuniqueness (以前のProp. 3.6とその証明を移動) %%%%%%%%%%%%%%%%%%%%%%%%%%%%%%%%%%%%%%%%%%%%%%%%%%%%%%%%%%%%%%%%%%%%%%
We also need to show the uniqueness of smooth plane curve of degree $d$ whose full automorphism group is of order $3(d^2-3d+3)$.

\begin{prop} \label{prop:uniquness_of_K_d}
Let $C$ be a smooth plane curve of degree $d \ge 5$, $G$ a subgroup of $\textup{Aut}(C)$.
Assume that $|G| = 3(d^2-3d+3)$. 
Then $C$ is projectively equivalent to Klein curve $K_d$ and $G=\textup{Aut}(K_d)$. 
\end{prop}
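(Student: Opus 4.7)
The plan is to apply Theorem \ref{thm:main1} to the pair $(C,G)$, eliminate every case except (b-ii), and then show directly that the defining polynomial of $C$ admits no low terms, forcing $C$ to be projectively equivalent to $K_d$.

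First I would rule out cases (a-i), (b-i), and (c) by arithmetic. The cyclic bound $|G|\leq d(d-1)$ in (a-i) is strictly smaller than $3(d^2-3d+3)$ for $d\geq 2$. In case (b-i), $G$ would embed into $\textup{Aut}(F_d)$, so $3(d^2-3d+3)$ would have to divide $6d^2$; since $2d^2\equiv 6(d-1)\pmod{d^2-3d+3}$, this reduces to $(d^2-3d+3)\mid 6(d-1)$, which fails for every $d\geq 5$ (by a size comparison for $d\geq 8$ and by inspection for $d\in\{5,6,7\}$). In case (c), $|G|\in\{36,60,72,168,216,360\}$, and the equation $3(d^2-3d+3)=|G|$ has no integer solution $d\geq 5$ for any of these six values.

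Case (a-ii) is the most delicate and I expect it to be the main obstacle. Write $|G|=|N|\cdot|G'|$ with $|N|\mid d$ and $G'$ conjugate to $\mathbb{Z}_m$, $D_{2m}$, $A_4$, $S_4$ or $A_5$ with $m\leq d-1$. Since $3(d^2-3d+3)\equiv 9\pmod{d}$, one obtains the decisive divisibility $|N|\mid\gcd(d,9)\in\{1,3,9\}$. If $G'$ is cyclic or dihedral then $|G'|\leq 2(d-1)$, so $|G|\leq 18(d-1)<3(d^2-3d+3)$ once $d\geq 8$; the residual cases $d\in\{5,6,7\}$ are dispatched by hand. If $G'\in\{A_4,S_4,A_5\}$ one enumerates the nine pairs $(|N|,|G'|)\in\{1,3,9\}\times\{12,24,60\}$, and checks that the resulting quadratic equations $|N|\cdot|G'|=3(d^2-3d+3)$ in $d$ have no integer roots (the relevant discriminants are non-squares).

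Having reduced to case (b-ii), $(C,G)$ is a descendant of Klein curve $K_d$. After a suitable coordinate change, a defining polynomial $F$ of $C$ has core $XY^{d-1}+YZ^{d-1}+ZX^{d-1}$ and $G\subseteq\textup{Aut}(K_d)$; comparing orders gives $G=\textup{Aut}(K_d)$, which by Proposition \ref{prop:K} is generated by $\sigma=[\xi^{-(d-2)}X,\xi Y,Z]$ and $\tau=[Y,Z,X]$ with $\xi$ a primitive $(d^2-3d+3)$-rd root of unity. Let $\chi\colon G\to\mathbb{C}^*$ be the character determined by $g\cdot F=\chi(g)F$; evaluation on any core monomial gives $\chi(\sigma)=\xi$ and $\chi(\tau)=1$. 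Every monomial $X^iY^jZ^k$ occurring in $F$ must therefore be a $\sigma$-eigenvector with eigenvalue $\xi$, which amounts to the congruence $-(d-2)i+j\equiv 1\pmod{d^2-3d+3}$. Setting $j\equiv 1+(d-2)i$ and imposing $0\leq j\leq d-i$, a routine case analysis on $i\in\{0,1,\dots,d\}$ shows that only $i\in\{0,1,d-1\}$ admit a valid $j$, producing exactly the three core triples $(0,1,d-1)$, $(1,d-1,0)$, $(d-1,0,1)$. Hence $F=c_1XY^{d-1}+c_2YZ^{d-1}+c_3ZX^{d-1}$, and the invariance $\tau\cdot F=F$ forces $c_1=c_2=c_3$, so $C$ is projectively equivalent to $K_d$ and $G=\textup{Aut}(K_d)$.
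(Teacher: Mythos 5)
Your proof is correct and follows essentially the same route as the paper: reduce to case (b-ii) of Theorem \ref{thm:main1}, identify $G$ with $\textup{Aut}(K_d)$ by comparing orders, and use the eigenvalue congruence $(d-2)i-j\equiv -1 \pmod{d^2-3d+3}$ arising from $\sigma=[\xi^{-(d-2)}X,\xi Y,Z]$ to show the defining polynomial has no low terms. The only difference is cosmetic: where you eliminate cases (a-i), (a-ii), (b-i) and (c) by a case-by-case divisibility and discriminant analysis, the paper dispatches them all at once by observing that $|G|=3(d^2-3d+3)$ is odd and exceeds $d^2$.
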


\begin{proof}
Note that $|G| = 3(d^2-3d+3)$ is an odd integer greater than $d^2$.
Hence (b-ii) in Theorem \ref{thm:main1} only can occur.
Thus $(C,G)$ is a descendant of Klein curve $K_d$.
Then $C$ is defined by a homogeneous polynomial whose core is $XY^{d-1}+YZ^{d-1}+ZX^{d-1}$
under a suitable coordinate system.
Furthermore $G=\textup{Aut}(K_d)$, since $|G|=3(d^2-3d+3)=|\textup{Aut}(K_d)|$.
Then $G$ contains an element $\sigma = [\xi^{-(d-2)} X,\xi Y,Z]$ ($\xi$ is a primitive $(d^2-3d+3)$-rd root of unity) from Proposition \ref{prop:K}.

Suppose that $F$ contains a low term $cX^iY^jZ^k$ $(c \ne 0, i+j+k=d)$.
We then have following equalities:
\begin{align*}
 F &= XY^{d-1}+YZ^{d-1}+ZX^{d-1} + cX^iY^jZ^k + \textup{(other low terms)}, \\
 F^{\sigma} &= \xi^{-1} (XY^{d-1}+YZ^{d-1}+ZX^{d-1}) + \xi^{(d-2)i-j} cX^iY^jZ^k + \textup{(other low terms)}.
\end{align*}
Since they are equal up to a constant we see that $\xi^{(d-2)i-j} = \xi^{-1}$, which implies that $\xi^{(d-2)i-j+1} =1$.
The indices $i$, $j$ and $k$ are at most $d-2$ since the term $c X^iY^jZ^k$ is low.
Hence $-d+3 \le (d-2)i-j+1 < (d-2)(d-1)+1=d^2-3d+3$, which shows that $(d-2)i-j+1 = 0$, i.e., $j = (d-2)i +1$.
Then we see that $i=0$, $j=1$ and $k=d-1$, which conflicts with our assumption that the term $c X^iY^jZ^k$ is low.
Thus we conclude that $F = XY^{d-1}+YZ^{d-1}+ZX^{d-1}$.
\end{proof}
%%% 2014/1/30 K_dのuniqueness %%%%%%%%%%%%%%%%%%%%%%%%%%%%%%%%%%%%%%%%%%%%%%%%%%%%%%%%%%%%%%%%%%%%%%%%%%%%%%%%%%%%%%%%%%%%%%%%%
%%%%%%%%%%%%%%%%%%%%%%%%%%%%%%%%%%%%%%%%%%%%%%%%%%%%%%%%%%%%%%%%%%%%%%%%%%%%%%%%%%%%%%%%%%%%%%%%%%%%%%%%%%%%%%%%%%%%%%%%%%%%

Now we are ready to give a proof of Theorem \ref{thm:main3}.

\begin{proof}[Proof of Theorem \ref{thm:main3}]
Let $C$ be a smooth plane curve of degree $d \ge 60$, $F$ a defining homogeneous polynomial of $C$.
Assume that a subgroup $G$ of $\textup{Aut}(C)$ is of order greater than $d^2$.

Since $d \ge 60$, we have the inequalities $|G| > 60d$ and $|G| > 360$.
Then there are only three possibilities from Theorem \ref{thm:main1}:

\begin{itemize}
\item[\textup{(i)}] $G$ fixes a point $P$ not lying on $C$ and $G$ is isomorphic to a central extension of $D_{2(d-2)}$ by $\mathbb{Z}_d$.
\item[\textup{(ii)}] $(C,G)$ is a descendant of Fermat curve $F_d: X^d+Y^d+Z^d=0$.
\item[\textup{(iii)}] $(C,G)$ is a descendant of Klein curve $K_d: X Y^{d-1}+Y Z^{d-1}+Z X^{d-1}=0$.
\end{itemize}

\vspace{1em}

\noindent
Case\,(i) In this case $G$ also fixes a line $L$ not containing $P$.
We may assume that $P = (0:0:1)$ and $L$ is defined by $Z=0$.
Then $G$ is generated by three elements $\eta = [X,Y,\zeta Z]$, $\sigma = [X, \omega Y, \omega' Z]$ and $\tau = [\gamma Y, \gamma X, Z]$,
where $\zeta$, $\omega$, $\omega'$ and $\gamma$ are certain roots of unity and the order of $\zeta$ (resp.~$\omega$) is $d$ (resp.~$d-2$).
Since $\eta$ preserves $F$ up to a constant, $F$ is written as $F = Z^d + \hat F(X,Y)$, 
where $\hat F(X,Y)$ is a homogeneous polynomial of $X$ and $Y$ without multiple factors.
Furthermore, $C$ intersects $L$ transversally at $P_1=(1:0:0)$ and $P_2=(0:1:0)$ respectively, by virtue of Claim \ref{claim:B} in Section \ref{Sec:Case(A)}.
Hence $\hat F(X,Y)$ has a factor of the form $X- cY$ $(c \ne 0)$.
Since $\sigma$ preserves $\hat F(X,Y)$ up to a constant, 
we conclude that $\hat F(X,Y) = \lambda XY\Pi_{k=0}^{d-3} (X-\omega^k cY) = \lambda XY (X^{d-2} - c^{d-2} Y^{d-2})$ $(\lambda \in \mathbb{C}^*)$.
Thus it is clear that $C$ is projectively equivalent to the curve defined by $Z^d + XY(X^{d-2}+Y^{d-2}) = 0$. 

\vspace{1em}

\noindent
Case\,(ii) From Lemma \ref{lem:descendant} we know that $C$ is projectively equivalent to $F_d$, $F'_d$ or $F''_d$ in this case.

\vspace{1em}

\noindent
Case\,(iii) In this case $G$ is a subgroup of $\textup{Aut}(K_d)$.
Since $\textup{Aut}(K_d)$ has an odd order $3(d^2-3d+3)$, we know that $G = \textup{Aut}(K_d)$ by our assumption that $|G| > d^2$.
It follows from Proposition \ref{prop:uniquness_of_K_d} that $C$ is projectively equivalent to Klein curve $K_d$.
\end{proof}

%%%%%%%%%%%%%%%%%%%%%%%%%%%%%%%%%%%%%%%%%%%%%%%%%%%%%%%%%%%%%%%%%%%%%%%%%%%%%%%%%%%%%%%%%%%%%%%%%%%%%%%%%%%%%%%%%%%%%%%%%%%%%%%%%%%%%%%%%%%%%%%%%%%%%%%%%%%%%%%%%%%
%%%%%%%%%%%%%%%%%%%% Section 5 終 %%%%%%%%%%%%%%%%%%%%%%%%%%%%%%%%%%%%%%%%%%%%%%%%%%%%%%%%%%%%%%%%%%%%%%%%%%%%%%%%%%%%%%%%%%%%%%%%%%%%%%%%%%%%%%%%%%%%%%%%%%%%%%%
%%%%%%%%%%%%%%%%%%%%%%%%%%%%%%%%%%%%%%%%%%%%%%%%%%%%%%%%%%%%%%%%%%%%%%%%%%%%%%%%%%%%%%%%%%%%%%%%%%%%%%%%%%%%%%%%%%%%%%%%%%%%%%%%%%%%%%%%%%%%%%%%%%%%%%%%%%%%%%%%%%%

\vspace{1em}
%%%%%%%%%%%%%%%%%%%%%%%%%%%%%%%%%%%%%%%%%%%%%%%%%%%%%%%%%%%%%%%%%%%%%%%%%%%%%%%%%%%%%%%%%%%%%%%%%%%%%%%%%%%%%%%%%%%%%%%%%%%%%%

\noindent   
\textbf{Acknowledgments.} \addcontentsline{toc}{part}{Acknowledgments} \label{Acknowledgments}
The author expresses his sincere gratitude to his professor Kazuhiro Konno and Professor Akira Ohbuchi
for their constructive comments and warm encouragement.
He also thank Professor Homma for his comment on preceding results.

%%%%%%%%%%%%%%%%%%%%%%%%%%%%%%%%%%%%%%%%%%%%%%%%%%%%%%%%%%%%%%%%%%%%%%%%%%%%%%%%%%%%%%%%%%%%%%%%%%%%%%%%%%%%%%%%%%%%%%%%%%%%%%

%%%%%%%%%%%% Bibliography %%%%%%%%%%%%%%%%%%%%%%%%%%%%%%%%%%%%%%%%%%%%%%%%%%%%%%%%%%%%%%%%%%%%%%%%%%%%%%%%%%%%%%%%%%%%%%%%%%%%


\begin{thebibliography}{00} \addcontentsline{toc}{part}{Bibliography} \label{Bibliography}
\bibitem[A]{A} T.~Arakawa, Automorphism groups of compact Riemann surfaces with invariant subsets,
 Osaka J.\ Math.\ {\bfseries 37}, No.~4 (2000), 823--846.

% \bibitem[AD]{AD} M.~Artebani, I.~Dolgachev, The Hesse pencil of plane cubic curves,
% L'Enseignement Mathematique, {\bfseries 55} (2009), 235--273.

\bibitem[Bl]{Bl} H.~Blichfeldt, Finite Collineation Groups: With an Introduction to the Theory of Groups of Operators and Substitution Groups,
 Univ. of Chicago Press, Chicago (1917). 

\bibitem[Br]{Br} T.~Breuer, Characters and Automorphism Groups of Compact Riemann surfaces,
 London Mathematical Society Lecture Note Series 280,
 Cambridge Univ.\ Press (2000).

\bibitem[BD]{BD} M.~Bradley and H.~D'Souza, Automorphism groups of plane curves,
 Comm.\ in Alg.\ {\bfseries 32} no.~8 (2004), 2885--2894.

\bibitem[BEM]{BEM} E.~Bujalance, J.~J.~Etayo and E.~Mart\'{i}nez, Automorphism groups of hyperelliptic Riemann surfaces,
 Kodai Math.\ J.\ {\bfseries 10} (1987), 174--181. 

\bibitem[BGG]{BGG} E.~Bujalance, J.~M.~Gamboa and G.~Gromadzki, The full automorphism groups of hyperelliptic Riemann surfaces,
 Manuscripta Math.\ {\bfseries 79} (1993), 267--282. 
 
\bibitem[DI]{DI} I.~Dolgachev and V.~Iskovskikh, Finite subgroups of the plane Cremona group,
 Algebra, Arithmetic, and Geometry, Progress in Mathematics Volume {\bfseries 269} (2009), 443--548. 
 
\bibitem[DIK]{DIK} H.~Doi, K.~Idei and H.~Kaneta, Uniqueness of the most symmetric non-singular plane sextics.
Osaka J.\ Math.\ {\bfseries 37} no.~3 (2000), 667--687. 
 
\bibitem[He]{He} P.~Henn, Die Automorphismengruppen der algebraischen Funktionenkorper vom Geschlecht 3,
 Inaugural-dissertation, Heidelberg (1976).
 
\bibitem[Hu]{Hu} A.~Hurwitz, \"{U}ber algebraische Gebilde mit Eindeutigen Transformationen in sich, 
 Math.\ Ann.\ {\bfseries 41}, no.~3 (1893), 403--442.

\bibitem[HKO]{HKO} T.~Harui, J.~Komeda and A.~Ohbuchi, Double coverings between smooth plane curves, 
 Kodai Math.\ J.\ {\bfseries 31}, no.~2 (2008), 257--262.   

\bibitem[HKKO]{HKKO} T.~Harui, T.~Kato, J.~Komeda and A.~Ohbuchi, Quotient curves of smooth plane curves with automorphisms,
 Kodai Math.\ J.\ {\bfseries 33}, no.~1 (2010), 164--172. 

\bibitem[Ka]{Ka} T.~Kato, A characterization of the Fermat curve,
 Nonlinear Analysis {\bfseries 47} (2001), 5479--5489.
 
\bibitem[Ko]{Ko} A.~I.~Kontogeorgis, The group of automorphisms of the function fields of the curve $x^n+y^m+1=0$,
 J.\ Number Theory {\bfseries 72}, no.~1 (1998), 110--136.

\bibitem[KKi]{KKi} A.~Kuribayashi and H.~Kimura, Automorphism groups of compact Riemann surfaces of genus five, 
 J.\ Algebra {\bfseries 134} (1990), 80--103.
 
\bibitem[KKu]{KKu} I.~Kuribayashi and A.~Kuribayashi, On automorphism groups of compact Riemann surfaces of genus 4, 
 Proc.\ Japan Acad.\ {\bfseries 62}, Ser.\ A (1986), 65--68. 
 
\bibitem[KMP]{KMP} H.~Kaneta, S.~Marcugini and F.~Pambianco, The most symmetric nonsingular plane curves of degree $n \le 20$, $I$,
 Geom.\ Dedicata {\bfseries 85} (2001), 317--334. 

\bibitem[L]{L} H-W.~Leopoldt, Uber die Automorphismengruppe des Fermatkorpers,
  J.\ Number Theory {\bfseries 56}, no.~2 (1996), 256--282.

\bibitem[M]{M} H.~H.~Mitchell, Determination of the ordinary and modular ternary linear groups,
 Trans.\ Amer.\ Math.\ Soc.\ {\bfseries 12}, no.~2 (1911), 207--242.

\bibitem[O]{O} K.~Oikawa, Notes on conformal mappings of a Riemann surface onto itself,
 Kodai Math.\ Sem.\ Rep.\ {\bfseries 8}, no.~1 (1956), 23--30.  
 
\bibitem[P]{P} F.~Pambianco, Characterization of the Fermat curve as the most symmetric nonsingular algebraic plane curve,
 Math.\ Z.\, to appear.

\bibitem[T]{T} P.~Tzermias, The group of automorphisms of the Fermat curve,
 J.\ Number Theory {\bfseries 53}, no.~1 (1995), 173--178.
 
\bibitem[W]{W} A.~Wiman, Ueber eine einfache Gruppe von 360 ebenen Collineationen, 
 Math.\ Ann.\ {\bfseries 47 } no.~4 (1896), 531--556. 

\bibitem[Y]{Y} H.~Yoshihara, Function field theory of plane curves by dual curves,
 J.\ Algebra {\bfseries 239}, no.~1 (2001), 340--355.
\end{thebibliography}
\end{document}